\numberwithin{equation}{section}
\newtheorem{theorem}{Theorem}[section]
\newtheorem{con}{Con}[section]
\newtheorem{lemma}[theorem]{Lemma}
\newtheorem{proposition}[theorem]{Proposition}
\newtheorem{corollary}[theorem]{Corollary}
\theoremstyle{remark}
\newtheorem{remark}[theorem]{Remark}
\theoremstyle{definition}
\newtheorem{definition}[theorem]{Definition}
\newtheorem{property}[con]{Property}
\newtheorem{assumption}[theorem]{Assumption}
\Crefname{assumption}{Assumption}{Assumptions}
\Crefname{property}{Property}{Properties}
\newtheorem{conjecture}{Conjecture}
\newenvironment{customthm}[1]
  {\innercustomthm}
  {\endinnercustomthm}
  \theoremstyle{remark}
\newcommand{\R}{\mathbb{R}}
\newcommand{\T}{\mathbb{T}}
\newcommand{\N}{\mathbb{N}}
\newcommand{\Z}{\mathbb{Z}}
\renewcommand{\P}{{\mathbb P}}
\newcommand{\sym}{\mathrm{\mathrm{sym}}}
\newcommand{\ds}{\displaystyle}
\newcommand{\eps}{\varepsilon}
\DeclareMathAlphabet{\mathup}{OT1}{\familydefault}{m}{n}
\newcommand{\dx}[1]{\mathop{}\!\mathup{d} #1}
\newcommand{\Leb}{\ensuremath{{L}}}
\DeclareMathOperator{\LSI}{LS}
\DeclarePairedDelimiter{\abs}{\lvert}{\rvert}
\DeclarePairedDelimiter{\norm}{\lVert}{\rVert}
\DeclarePairedDelimiter{\bra}{(}{)}
\DeclarePairedDelimiter{\pra}{[}{]}
\DeclarePairedDelimiter{\set}{\{}{\}}
\DeclarePairedDelimiter{\skp}{\langle}{\rangle}
\newenvironment{tenumerate}[1][]
  {\enumerate[label=\textup(\alph*\textup),ref=(\alph*),#1]}
  {\endenumerate}
\newcommand{\cE}{\ensuremath{\mathcal E}}
\newcommand{\cF}{\ensuremath{\mathcal F}}
\newcommand{\cI}{\ensuremath{\mathcal I}}
\newcommand{\cK}{\ensuremath{\mathcal K}}
\newcommand{\cL}{\ensuremath{\mathcal L}}
\newcommand{\cP}{\ensuremath{\mathcal P}}
\newcommand{\cX}{\ensuremath{\mathcal X}}
\newcommand{\cY}{\ensuremath{\mathcal Y}}
\newcommand{\C}{\ensuremath{\textrm{C}}}
\newcommand{\E}{\mathbb{E}}
\begin{document}
\begin{frontmatter}
 \title{Phase transitions, logarithmic Sobolev inequalities, and uniform-in-time propagation of chaos for weakly interacting diffusions} %and fluctuations for weakly interacting diffusion processes}
%\title{The role of phase transitions in uniform-in-time propagation of chaos for weakly interacting diffusion processes}

%\title{A sample article title with some additional note\thanksref{t1}}
\runtitle{}
%\thankstext{T1}{A sample additional note to the title.}

\begin{aug}

\author[A]{\fnms{Mat\'ias G.} \snm{Delgadino}\ead[label=e1]{matias.delgadino@math.utexas.edu}},
\author[B]{\fnms{Rishabh S.} \snm{Gvalani}\ead[label=e2]{gvalani@mis.mpg.de}},
\author[C]{\fnms{Grigorios A.} \snm{Pavliotis}\ead[label=e3]{g.pavliotis@imperial.ac.uk}}
\and
\author[D]{\fnms{Scott A.} \snm{Smith}\ead[label=e4]{ssmith@amss.ac.cn}}

\address[A]{Department of Mathematics, The University of Texas at Austin
\printead{e1}}

\address[B]{Max-Planck-Institut f\"ur Mathematik in den Naturwissenschaften
\printead{e2}}
\address[C]{Department of Mathematics, Imperial College London
\printead{e3}}
\address[D]{Academy of Mathematics and Systems Sciences, Chinese Academy of Sciences
\printead{e4}}
\end{aug}

\begin{abstract}

\end{abstract}

\begin{keyword}[class=MSC2020]
\kwd[Primary ]{60K35}
\kwd{82B26}
\kwd[; secondary ]{39B62}
\end{keyword}

\begin{keyword}
\kwd{Interacting particle systems}
\kwd{log Sobolev inequalities}
\kwd{phase transitions}
\kwd{propagation of chaos}
\end{keyword}

\end{frontmatter}

%\begin{abstract}
{\bf Abstract} In this article, we study the mean field limit of weakly interacting diffusions for confining and interaction potentials that are not necessarily convex. We explore the relationship between the large $N$ limit of the constant in the logarithmic Sobolev inequality (LSI) for the $N$-particle system and the presence or absence of phase transitions for the mean field limit. The non-degeneracy of the LSI constant is shown to have far reaching consequences, especially in the context of uniform-in-time propagation of chaos and the behaviour of equilibrium fluctuations. Our results extend previous results related to unbounded spin systems and recent results on propagation of chaos using novel coupling methods. As incidentals, we provide concise and, to our knowledge, new proofs of a generalised form of Talagrand's inequality and of quantitative propagation of chaos by employing techniques from the theory of gradient flows, specifically the Riemannian calculus on the space of probability measures.
%\tableofcontents
\section{Introduction}

Interacting particle systems have attracted a lot of attention in recent years since they appear in diverse areas ranging from plasma physics and galactic dynamics to machine learning and optimization. For systems of identical (or exchangeable) particles in which the pair-wise interactions scale like the inverse of the number of particles, it is possible to pass to the mean field limit and obtain a coarse-grained description of the system via a nonlinear nonlocal PDE that governs the evolution of the one-particle density. In this paper, we consider systems of weakly interacting diffusions driven by pair-wise interactions, confinement and independent Brownian motions (see~\eqref{eq:particle system} ). In this case the mean field PDE is the so-called McKean--Vlasov equation. 

A natural problem that one would like to address is how to obtain sharp quantitative estimates on the rate at which the empirical measure of the particle system converges to the mean field limit, as the number of particles $N$ goes to infinity. When considering arbitrarily long time scales, this problem is intimately connected to the rate of convergence to steady states as time $t$ goes to infinity. For the study of such quantitative results, a crucial role is played by the Poincar\'{e} (PI)  and logarithmic Sobolev (LSI) inequalities. Our focus in this paper is to elucidate the connection between the validity of the LSI for the $N$-particle Gibbs measure uniformly in the number of particles $N$ and the properties of the mean field limit. We establish connections with uniform-in-time propagation of chaos, (non-)uniqueness of steady states of the mean field equation, exponential convergence to equilibrium, and the behaviour of equilibrium fluctuations. We show the effect that the presence of multiple steady states for the mean field equation, which correspond to invariant measures of the associated nonlinear McKean SDE, has on the various quantitative and qualitative features of the underlying interacting particle system. 

In the equilibrium statistical mechanics of lattice systems, the non-uniqueness of the infinite-volume grand canonical Gibbs measure is referred to as a phase transition~\cite{Georgii2011}. The presence of such a phase transition can then be detected with the help of some order parameter, for example the thermodynamic pressure for the nearest-neighbour Ising model, or the average magnetisation of the ensemble for the mean-field Curie--Weiss model. Based on the behaviour of these quantities (or rather on the behaviour of the infinite-volume partition function), one can then characterise the phase transitions as either continuous (second-order) or discontinuous (first-order).

The situation in our setting is more complicated but closely mirrors the one for lattice systems. Indeed, the system we consider can be thought of as spin system with mean field interaction and the ``spins'' taking values in some uncountable state space $\Omega$. The difference between the system we consider and the Ising and Curie--Weiss model lies in the fact that, except in a small number of specific examples, it is extremely hard to specify an order-parameter or understand the exact behaviour of the infinite-volume partition function.  An additional important difference is that for our system we will be more interested in the non-uniqueness of critical points of the free energy, which correspond to important changes in the features of the system, as opposed to non-uniqueness of its minimisers. In either case, the similarity with spin systems is instructive enough that it will serve the reader well to remember this analogy as we discuss the notion of phase transition we work with.

A detailed characterisation of phase transitions for McKean-Vlasov PDEs on the torus without a confining potential was given in~\cite{CGPS19}. In particular, the presence of phase transitions for this setting as it relates to non-uniqueness of minimisers of the free energy functional was discussed in detail. However, in this paper, we are more interested in characterising these phase transitions as they relate to non-uniqueness of critical points of the free energy.

For convex confining and interaction potentials (when the state space is Euclidean), the system does not undergo phase transitions. In fact, uniform-in-time propagation of chaos and uniqueness of the steady state for the mean field PDE have been established, for e.g. in~\cite{malrieu2003convergence}. Moreover, in \cite{CMV} under a uniform convexity assumption of the potentials, the authors show exponentially fast relaxation to the unique steady state of the mean field system. Our focus in this paper is to deal with non-convex potentials which may exhibit phase transitions and thus could not be expected to always (for all temperatures) exhibit uniform-in-time propagation of chaos. In terms of the LSI, we show that the existence of a non-minimising steady state implies the (quantitative) degeneracy of the constant in the LSI of the $N$-particle Gibbs measure in the limit as $N \rightarrow +\infty$. On the other hand, we show that the non-degeneracy of the LSI, implies uniform-in-time propagation of chaos and Gaussianity of the fluctuations around the mean field limit at equilibrium. Furthermore, we conjecture that the limit of the LSI constant for the $N$-particle system can be uniquely characterised in terms of the dissipation inequality for the mean field system.

The relation between the non-degeneracy of the constant in the PI or the LSI, the absence of phase transitions, and the exponentially fast decay of correlations has been studied extensively for unbounded spin systems~\cite{yoshida2003}. Conversely, in these works the equivalence between the slow decay of correlations and the fact that the constant in the LSI becomes degenerate at the phase transition has been established. Uniform estimates on the constant in the LSI beyond the convex case have been established recently~\cite{guillin2019uniform}, under the Lipschitzian spectral gap condition for the single particle. We remark that this assumption is reminiscent to the assumptions on the conditional measures for the two-scale LSI~\cite{OR07, GOVW09, Lel09}. We utilize the latter approach to show the non-degeneracy of the LSI for our $N$-particle Gibbs measure in the high temperature/weak interaction regime. 

In the probability literature, the study of the LSI in the context of linear Fokker--Planck equations goes back to the classical $\Gamma_2$ functional introduced by Bakry and Emery \cite{bakry2013analysis}. More recently, contractivity for interacting particle systems has been studied in the context of  entropic interpolation and Schr\"odinger bridges \cite{R,GLRT,BCGC,GLR}. These techniques yield proofs of both the Talagrand~\cite{CR} and Sobolev~\cite{dupaigne2020sobolev} inequality, under lower curvature conditions on the underlying manifold. We also mention the novel coupling techniques introduced by Eberle in~\cite{Ebe11} that produce contractivity estimates in a tailored transportation cost distance. This approach was then later used to prove uniform-in-time propagation of chaos estimates under a smallness assumptions on the interaction potential~\cite{DEGZ18}.

Our approach in this paper exploits the fact that both the $N$-particle system (or rather its Fokker--Planck equation) and its mean field limit are gradient flows of a particular energy functional with respect to the $2$-Wasserstein distance. We can use this structural feature of the system to study the limit of all the relevant quantities as $N\to\infty$. This approach was pioneered by Hauray and Mischler in \cite{hauray2014kac}, and later used by the authors in \cite{CARRILLO2020108734,delgadino2020diffusive} to study both propagation of chaos and periodic homogenization for the interacting particle system. The advantage of this approach is that we can often make minimal assumptions on the regularity of the confining and interaction potentials: we will essentially assume that they are both only semi-convex, which is natural for 2-Wasserstein gradient flows (see \cite{ambrosio2008gradient}). We refer to \cite{S,JW,JW2} for the reader interested in propagation of chaos results with more singular potentials.

\subsection*{Organization of the paper.} \cref{setup} sets up the problem we are interested in studying along with our notation and main assumptions. \cref{mainresults} contains the statements of all our main results. \cref{sec:phasetransition} connects our results to the phenomenon of phase transitions and discusses possible properties that could capture the radical change of behavior in our system in the presence multiple steady states of the mean field equation. \cref{sec:DFHS} contains some technical results on the convergence of the relevant quantities and functionals as $N\to\infty$ which play an important role in the proofs of our main theorems. \cref{sec:Talagrand,sce:degeneracy,sec:contraction,sec:uniquelmit,sec:uniform,sec:twoscale,sec:fluctuations}  contain the proofs of \Cref{thm:talagrand,thm:degeneracy,thm:contraction,thm:uniquelimit,thm:uniform,thm:twoscale,thm:fluctuations}, respectively.

\section{Set up, assumptions, and notation}\label{setup}
 We consider $\{X_t^i\}_{i=1,\dots,N}\subset \R^d$, the positions of $N$ indistinguishable interacting particles at time $t \geq 0$, satisfying the following system of SDEs:
\begin{equation}\label{eq:particle system}
\begin{cases}
\displaystyle
dX_t^i=-\nabla V(X_t^i)\dx{t}- \frac{1}{N}\sum_{j=1}^N\nabla_1 W(X_t^i,X_t^j) \dx{t}+\sqrt{2\beta^{-1}}dB_t^i\\
\displaystyle
\mathrm{Law}(X_0^1,\dots,X_0^N)=\rho_{\mathrm{\mathrm{in}}}^{\otimes N}\in\mathcal{P}_{2,\mathrm{\mathrm{sym}}}((\R^d)^N),
\end{cases}
\end{equation}
where $V:\R^d\to \R$, $W:\R^d\times \R^d\to \R$, $\beta^{-1}>0$ is the inverse temperature, $B_t^i,i=1,\dots,N$ are independent $d$-dimensional Brownian motions, and the initial position of the particles is i.i.d with law $\rho_{\mathrm{\mathrm{in}}}$. The chaoticity assumption on the initial data is not necessary but it greatly simplifies the exposition. Similarly, the state space $\R^d$ can be replaced by the periodic domain $\T^d$ or any convex set $\Omega\subset \R^d$ with normal reflecting boundary conditions, see, for example, \cite{sznitman1984nonlinear}. We denote the space of symmetric Borel probability measures on $\Omega^N$ with finite second moment by $\mathcal{P}_{2,\mathrm{\mathrm{sym}}}(\Omega^N)$, i.e. probability measures which are invariant under the relabeling of variables (or probability measures that arise as laws of exchangeable random variables). Throughout the paper, we will always work with probability measures that have finite second moment; to avoid burdensome notation, we forego the subscript $2$ from now on and simply write $\mathcal{P}_{\mathrm{\mathrm{sym}}}(\Omega^N)$. 

\noindent To ensure well-posedness of the evolutionary flows and coercivity, we make the following assumptions.
\begin{assumption}
    The confining potential $V$ is lower semicontinuous, bounded below, $K_V$-convex for some $K_V\in\R$ and there exists $R_0>0$ and $\delta>0$, such that $V(x)\ge |x|^\delta$ for $|x|>R_0$.
    \label{A1}
\end{assumption}
\begin{assumption}
    The interaction potential $W$ is lower semicontinuous, $K_W$-convex for some $K_W\in\R$, bounded below, symmetric $W(x,y)=W(y,x)$, vanishes along the diagonal $W(x,x)=0$, and there exists $C$ such that
\begin{equation}\label{doubling}
    |\nabla_1 W(x,y)|\le C(1+|W(x,y)|+V(x)+V(y))
\end{equation}
    \label{A2}
\end{assumption}
\begin{remark}
The $K$-Convexity assumptions on the potentials is short hand for global lower bounds on their Hessians
$$
D^2 V\ge K_V I^{d\times d}\qquad\mbox{and}\qquad D^2 W\ge K_W I^{2d\times 2d}
$$
with $K_V,$ $K_W\in \R$. Unlike a convexity assumption on the potentials, i.e. $K$-convexity with $K=0$ (see \cite{malrieu2003convergence} for results in the convex case), these assumptions are weak enough to include models that exhibit phase transitions with respect to changes in the system's temperature, $\beta^{-1}$, for example, the double well potential $V(x)=(1-|x|^2)^2$ with quadratic interactions $W(x,y)=|x-y|^2$, also known as the Desai--Zwanzig model (for more details see \cite{dawson1983critical}.
\end{remark}
\begin{remark}
The more technical bound \eqref{doubling} replaces the more classical doubling condition  \cite[Section 10.4.42]{ambrosio2008gradient} which is used to characterise the minimal sub-differential of the interaction energy \cite[Theorem 10.4.11]{ambrosio2008gradient}.
\end{remark}

\medskip

\subsection{The Fokker--Planck equation.} It is well known that the curve $\rho^N:[0,\infty)\to \mathcal{P}_{\mathrm{sym}}(\Omega^N)$ which describes the evolution of the law of the process $(X_t^1,...,X_t^N)\in \Omega^N$ satisfies the following linear Fokker--Planck equation
\begin{equation}\label{eq:forwardK}
\begin{cases}
\partial_t \rho^N =\beta^{-1}\Delta \rho^N+\nabla \cdot(\rho^N\;\nabla H_N)&\mbox{in $(0,\infty)\times \Omega^{N}$},\\
(\nabla \rho^N+\rho^N\nabla H_N)\cdot \vec{n}_{
\Omega^N}=0&\mbox{on $(0,\infty)\times \partial\Omega^{N}$}\\
\rho^N(0)=\rho_{\mathrm{in}}^{\otimes N},
\end{cases}
\end{equation}
where the Hamiltonian $H_N$ is given by
$$
H_N(x)=\sum_{i=1}^NV(x_i)+\frac{1}{2N}\sum_{j=1}^N\sum_{i=1}^N W(x_i,x_j),
$$
$\rho_{\mathrm{in}}\in \mathcal{P}(\Omega)$, and $\vec{n}_{\Omega^N}$ is the unit normal to $\Omega^{N}$.

\subsection{de Finetti/Hewitt--Savage.} To take the limit as $N\to \infty$, we will crucially use the exchangeability of the underlying particle system whose law is governed by~\eqref{eq:forwardK}. This implies that the joint law $\rho^N$ is symmetric for all times, that is
$$
\rho^N(t)\in \mathcal{P}_{\mathrm{sym}}(\Omega) \quad \textrm{ for all } t\geq0.
$$
The main idea is that we can characterise the limit $N\to\infty$ of $\mathcal{P}_{\mathrm{sym}}(\Omega^N)$ as $\mathcal{P}(\mathcal{P}(\Omega))$, which denotes the Borel probability measures with bounded second moment defined over the metric space $(\mathcal{P}(\Omega),d_2)$, where $d_2$ is the 2-Wasserstein distance.

Following de Finetti \cite{deFinetti} and Hewitt--Savage \cite{HewittSavage}, we know that any tight sequence $(\rho^N)_{N\in\N}$,  with $\rho^N\in\mathcal{P}_{\mathrm{sym}}(\Omega^{N})$, i.e. with tight (in $\cP(\Omega)$) $l$\textsuperscript{th} marginals for all $l \in \N$, has a limit  $P_\infty\in\mathcal{P}(\mathcal{P}(\Omega))$ along a subsequence which we do not relabel such that

$$
\rho^N\rightharpoonup P_\infty,
$$
where weak convergence is given by duality with cylindrical test functions. That is, for any $l\in\N$ and $\varphi\in C_c(\Omega^l)$, we have
\begin{equation}\label{def:DFHS}
    \lim_{N \to \infty}\int_{\Omega^l}\varphi(y)\;\dx \rho^N_l(y)=\int_{\mathcal{P}(\Omega)}\left(\int_{\Omega^{l}}\varphi(y)\;\dx\rho^{\otimes l}(y)\right)\;\dx P_\infty(\rho),
\end{equation}
where 
$$
\rho^N_l\in\mathcal{P}_{\mathrm{sym}}(\Omega^l)\qquad\mbox{is the $l$-th marginal of $\rho^N$.}
$$  
In essence, this means that in the limit $N\to\infty$  of symmetric probability measures can be characterised as convex combinations of chaotic measures. For more details, we refer the reader to~\cite{hauray2014kac,CARRILLO2020108734,rougerie2015finetti}. In the sequel, we will use the notation $\rho^N \rightharpoonup P_\infty \in \mathcal{P}(\mathcal{P}(\Omega))$ to denote this notion of weak convergence for any sequence $(\rho^N)_{N\in\N}$  with $\rho^N\in\mathcal{P}_{\mathrm{sym}}(\Omega^{N})$.

\noindent Moreover, a metric version of this result can be obtained by considering the appropriately scaled 2-Wasserstein distance, i.e. 
\begin{equation}\label{scaledd2}
    \overline{d}_2:=\frac{1}{\sqrt{N}}d_2,
\end{equation}
where $d_2$ is the classical 2-Wasserstein on $\mathcal{P}(\Omega^N)$. More specifically, Hauray--Mischler~\cite{hauray2014kac} showed that under the topology of~\eqref{def:DFHS}
\begin{equation}\label{frakd2}
    \lim_{N\to\infty} \overline{d}_2=\mathfrak{D}_2,
\end{equation}
where $\mathfrak{D}_2$ is the 2-Wasserstein distance defined on the probabilities with second moment bounded over the metric space $(\mathcal{P}_2(\R^d),d_2)$. For more details, see~\cref{thm:HM}. This metric on $\cP(\cP(\Omega))$ is closely related to the convergence discussed above as will be seen in~\cref{prop:compactness}.

\begin{figure}[h!]
\centering

\begin{tikzpicture}
\draw [black] plot [smooth, tension=1.5] coordinates {(-1,2) (0,0) (1,2)};
\draw[blue,fill=blue] (0,0) circle (.5ex);
\draw [black] plot [smooth, tension=1.1] coordinates {(2,2) (2.25,0.7) (3,0) (3.75,0.7) (4,2)};
\draw[blue,fill=blue] (3,0) circle (.5ex);
\draw [black] plot [smooth, tension=1.1] coordinates {(5,2) (5.25,-0.35) (6,0) (6.75,-0.35) (7,2)};
\draw[red,fill=red] (6,0) circle (.5ex);
\draw[blue,fill=blue] (5.43,-0.5) circle (.5ex);
\draw[blue,fill=blue] (6.57,-0.5) circle (.5ex);

\end{tikzpicture}

\begin{tikzpicture}
\node (A) at (0,0) {};
    \node (B) at (6,0) {};
    \draw[->] node[above] {$\beta<\beta_c$} (A) -- (B) node[above,midway] {$\beta=\beta_c$} node[above] {$\beta>\beta_c$}   ;
\end{tikzpicture}

\vspace{1em}
\begin{tikzpicture}
\draw [black] plot [smooth, tension=1.5] coordinates {(-1,2) (0,0) (1,2)};
\draw[blue,fill=blue] (0,0) circle (.5ex);
\draw [black] plot [smooth, tension=0.7] coordinates {(2,2) (2.25,0.7) (2.7,0.4) (3,0) (3.3,0.4) (3.75,0.7) (4,2)};
\draw[blue,fill=blue] (3,0) circle (.5ex);
\draw[red,fill=red] (2.5,0.47) circle (.5ex);
\draw[red,fill=red] (3.5,0.47) circle (.5ex);
\draw [black] plot [smooth, tension=1.1] coordinates {(5,2) (5.25,0.7) (5.7,0.6) (6,0) (6.3,0.6) (6.75,0.7) (7,2)};
\draw[blue,fill=blue] (6,0) circle (.5ex);
\draw[blue,fill=blue] (6.6,0.52) circle (.5ex);
\draw[red,fill=red] (6.4,0.62) circle (.5ex);
\draw[blue,fill=blue] (5.4,0.52) circle (.5ex);
\draw[red,fill=red] (5.6,0.62) circle (.5ex);
\end{tikzpicture}
\caption{A rough schematic showing two possible kinds of phase transition: The upper diagram shows a typical continuous phase transition. In this setting, the unique critical point (shown in blue) loses its local stability through a local (pitchfork) bifurcation which gives rise to new locally stable critical points. The lower diagram shows a typical discontinuous phase transition. In this setting, the unique critical point retains its local stability but new critical points arise in the free energy landscape through a saddle node bifurcation.}
\label{schematic}
\end{figure}
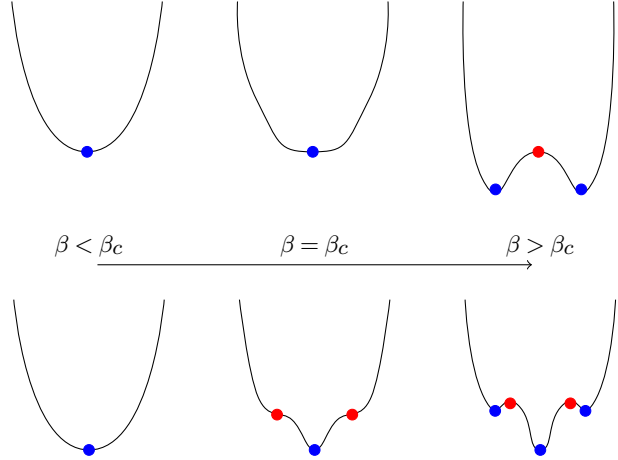

\subsection{The mean field limit and nonlinear behaviour} Within this formalism, the limit $N\to\infty$ of the equation \eqref{eq:forwardK} can be written as
\begin{equation}\label{propofchaos}
\rho^N(t)\rightharpoonup\delta_{\rho(t)}\in\mathcal{P}(\mathcal{P}(\Omega))\qquad\mbox{for all $t\ge 0$,}    
\end{equation}
where $\delta_{\rho(t)}$ denotes the delta measure concentrated on $\rho(t)$, the unique solution of the nonlinear McKean equation
\begin{equation}\label{McKean}
    \begin{cases}
    \partial_t\rho=\beta^{-1}\Delta\rho+\nabla\cdot\left(\rho\left(\nabla V+\nabla W\star \rho\right)\right)&\mbox{on $(0,\infty)\times \Omega$}\\
    (\nabla \rho+\rho(\nabla V+\nabla W\star \rho)\cdot\vec{n}_\Omega=0&\mbox{on $(0,\infty)\times \partial\Omega$}\\
    \rho(0)=\rho_{\mathrm{in}},
    \end{cases}
\end{equation}
with
\begin{equation}
    W\star \rho(x):=\int_{\R^d} W(x,y)\;\dx\rho(y) \, .
\end{equation}

\noindent One of the most salient differences between the particle dynamics \eqref{eq:forwardK} and the mean field dynamics \eqref{McKean} is that, whereas the Fokker-Planck equation governing the evolution of the $N$-particle system is linear, the mean field PDE~\eqref{McKean} is nonlinear. As is well known, a consequence of this is that for non-convex confining/interaction potentials the mean field dynamics might have more than one stationary state, in contrast to the particle dynamics. Indeed, the unique steady state of the $N$-particle Fokker--Planck equation ~\eqref{eq:forwardK} is given by the Gibbs measure
\begin{equation}\label{Gibbs}
M_N:=\frac{e^{-\beta H_N(x)}}{Z_N} \dx{x} \qquad\mbox{with}\qquad Z_N:=\int_{\Omega^N} e^{-\beta H_N(y)}\;\dx y.    
\end{equation}
On the other hand, the mean field limit can admit more than a single steady state, with the full characterisation being the set of solutions to the self-consistency equation
\begin{equation}\label{self-consistency}
    \beta^{-1}\log\rho_*+W\star\rho_*+V=C_*\qquad\mbox{on $\Omega$ for some $C_*\in\R$,}
\end{equation}
This is discussed in detail in~\cref{prop:steadystate}. See also~\cite{CGPS19, dawson1983critical} and the references therein.

\medskip

\subsection{Phase transitions}  The uniqueness/non-uniqueness of steady states of the mean field system~\eqref{McKean} depends both the temperature of the system (or, equivalently, the strength of the interaction) and on the convexity properties of the confining and interaction potentials $V$ and $W$. At sufficiently high temperatures, the diffusion is strong enough that the expected escape time of particles from local minima of the potentials is bounded uniformly in the number of particles. Indeed, a perturbation argument shows that, at sufficiently high temperatures, the self-consistency equation~\eqref{self-consistency} has a unique solution, see Proposition~\ref{prop:high_temp}. When we cool the system and the potentials are non-convex, particles can get trapped for arbitrarily long time scales in local minima and condense~\cite{BashiriMenz2021}. In statistical physics terms, the system changes from a gaseous state to a liquid or solid state. In the mean field limit, this change of behavior can be characterised by the local or global instability of the minimisers of the mean field energy (see~\eqref{meanfieldenergy}), see for instance \cite{CGPS19,chayes2010mckean} and \cref{schematic} for a loose picture of the change in the mean field energy landscape as the temperature of the system is varied. For more details on the possible definitions of a phase transition, see Section~\ref{sec:phasetransition}.

\medskip

\subsection{Qualitative long time behavior} Due to the linearity of the $N$-particle system, we know that \eqref{eq:forwardK} admits a unique steady state (given by the Gibbs measure $M_N$). If the potentials prevent mass from escaping to infinity, by the La-Salle's principle for graident flows \cite[Theorem 2.13]{carrillo2020invariance}, we know that independent of the initial condition
\begin{equation}\label{convergence1}
\lim _{t \to \infty }\rho^N(t)= M_N \, ,
\end{equation}
in the sense of weak convergence of probability measures. Furthermore, entropy methods and the LSI provide us with exponentially fast convergence to the steady state~\cite{Marko_Vill00}. On the other hand, if the McKean--Vlasov equation admits multiple steady states, then the limit $t\to\infty$ for the mean field dynamics will depend on the choice of initial condition. In this case, the limiting dynamics \eqref{McKean} do not approximate the particle dynamics \eqref{eq:particle system} for arbitrarily long times. In this paper we provide evidence that phase transitions constitute the natural obstruction to obtaining uniform-in-time propagation of chaos estimates. See Theorem~\ref{thm:uniform} for sufficient conditions for the mean field approximation to be valid uniformly in time.

\subsection{$\Gamma$-convergence} Taking advantage of the 2-Wasserstein gradient flow structure of \eqref{eq:forwardK} and \eqref{McKean} (see \cite{ambrosio2008gradient,CARRILLO2020108734, santambrogio2017euclidean}), the main tool that we will use to obtain a quantitative understanding of the limit $N\to\infty$ is $\Gamma$-convergence with respect to the topology introduced by de Finetti/Hewitt-Savage-type convergence \eqref{def:DFHS}. To illustrate this technique, we use it to characterise the limit $N\to\infty$ of the Gibbs measure $M_N$. Following the pioneering work of Messer and Spohn \cite{messer1982statistical}, we notice that $M_N$ is the unique minimiser over probability measures of the energy per unit particle
\begin{equation}\label{EnergyN}
E^N[\rho^N]=\frac{1}{N}\left(\beta^{-1}\int_{\Omega^N}\rho^N\log\rho^N\;\dx{x}+\int_{\Omega^N} H_N \rho^N\;\dx{x}\right).
\end{equation}
Taking $N\to\infty$, we can characterise the thermodynamic limit of the energy $E^N$ as $E^N\to^{\Gamma}E^\infty:\mathcal{P}(\mathcal{P}(\Omega))\to \R\cup\{+\infty\}$, where
\begin{align}
    E^\infty[P]:=\int_{\mathcal{P}(\Omega)}E^{MF}[\rho]\dx P(\rho),\label{Energyinfty}
\end{align}
with the mean field energy $E^{MF}:\mathcal{P}(\Omega)\to\R\cup\{+\infty\}$ given by 
\begin{equation}\label{meanfieldenergy}
E^{MF}[\rho]:=\beta^{-1}\int_{\Omega}\rho\log(\rho)\;\dx x+\frac{1}{2}\int_{\Omega^{2}}W(x,y)\;\dx \rho(x)\; \dx\rho(y)+\int_{\Omega}V(x)\; \dx \rho(x) \, ,
\end{equation}
see \cite{hauray2014kac} or \Cref{thm:MS} for a more modern proof. Using the fact that $M_N$ is the minimiser of \eqref{EnergyN}, we know that any accumulation point of the sequence $M_N$ (in the sense of de Finetti/Hewitt--Savage) $P_\infty\in\mathcal{P}(\mathcal{P}(\Omega))$ needs to be a minimiser of \eqref{Energyinfty}. Hence, we may conclude that the minimal energy converges
$$
\lim_{N\to\infty} \left(-\frac{1}{N}\log Z_N  \right)=\lim_{N} E^N[M_N]=E^\infty[P_\infty]=\inf_{P\in\cP(\cP(\R^d))}E^\infty[P] =\inf_{\rho\in\cP(\R^d)} E^{MF}[\rho],
$$
where we have used that \eqref{Energyinfty} is a potential energy, which also implies that $P_\infty$ needs to be supported on the minimisers of $E^{MF}$. This convergence and related results are discussed in further detail in~\cref{sec:DFHS}.

Under our previous hypothesis, we have the following standard result.
\begin{customthm}{A}\label{Ebound}
Under \cref{A1,A2}, $E^{MF}$ is bounded below and has at least one minimiser.
\end{customthm}
\noindent The lower bound follows from Jensen's inequality, while the existence of a minimiser follows from the direct method of calculus of variations. Under the extra assumption that the $E^{MF}$ admits a unique minimiser $\rho_\beta\in \mathcal{P}(\R^d)$, we have
$$
M_N\rightharpoonup \delta_{\rho_\beta}\in \mathcal{P}(\mathcal{P}(\Omega)) \, .
$$

\noindent The perspective of the proofs in this paper is that the evolution of the $N$-particle law \eqref{eq:forwardK} and the mean field limit~\eqref{McKean} are respectively the gradient flows of $E^N$ and $E^{MF}$ with respect to the scaled 2-Wasserstein distance $\bar d_2$ and the $2$-Wasserstein distance on $\cP(\Omega)$. In fact, the $\lambda$-convexity assumption on the potentials and the doubling condition \eqref{doubling} can be used to obtain uniqueness of the gradient flow solutions. The next fundamental result is essentially a restatement of~\cite[Theorem 11.2.8]{ambrosio2008gradient}, for the compact case see \cite{santambrogio2017euclidean}.

\begin{customthm}{B}\label{wellposedness}
If \cref{A1,A2} hold, then for any $\rho_{\mathrm{in}}\in\mathcal{P}(\Omega)$ there exists unique distributional solutions $\rho^N\in C([0,\infty);\mathcal{P}_{\mathrm{sym}}(\Omega^N)$ and $\rho\in C([0,\infty);\mathcal{P}(\Omega)$ to \eqref{eq:forwardK} and \eqref{McKean}, respectively, which are the gradient flows of $E^N$ \eqref{EnergyN} and $E^{MF}$ \eqref{meanfieldenergy} with respect to the scaled 2-Wasserstein distance $\bar d_2$ and the $2$-Wasserstein distance $d_2$ on $\cP(\Omega)$, respectively.
\end{customthm}

\noindent We remark here that in~\cite{CARRILLO2020108734} an alternative proof of propagation of chaos is provided which employs the $\Gamma$-convergence result and the convergence of the gradient flow structures.

\section{Main results}\label{mainresults}
 To quantify the convergence as $t\to\infty$ in \eqref{convergence1}, we can apply the standard relative entropy estimate~\cite{Varadhan1991}. More specifically, we consider the Lyapunov functional given by the scaled relative entropy of $\rho^N (t)$ with respect to the equilibrium measure $M_N$:
\begin{equation}\label{eq:Relentropy}
        E^N[\rho^N(t)]-E^N[M_N]=\overline{\mathcal{E}}(\rho^N(t)|M_N):=\frac{1}{N}\int_{\Omega^N}\log\left(\frac{\rho^N(t)}{M_N}\right)\rho^{N}(t)\;\dx x,
\end{equation}
where we use the notation $\overline{\mathcal{E}}:\mathcal{P}(\Omega)\times\mathcal{P}(\Omega)\to [0,\infty]$ to denote the scaled relative entropy. Taking a time derivative and using the PDE \eqref{eq:forwardK} we obtain the scaled relative Fisher information $\overline{\mathcal{I}}:\mathcal{P}(\Omega)\times\mathcal{P}(\Omega)\to[0,\infty]$:
\begin{align}\label{eq:dissipation}
   \frac{\dx}{\dx t}\overline{\mathcal{E}}(\rho^N(t)|M_N)
   =-\beta^{-1}\frac{1}{N} \int_{\Omega^N}\left|\nabla\log\left(\frac{\rho^N(t)}{M_N}\right)\right|^2\rho^{N}(t)\;\dx x
   =:-\beta^{-1} \overline{\mathcal{I}}(\rho^N(t)|M_N).
\end{align}
The convergence \eqref{convergence1} in relative entropy is exponential whenever we can show that the $N$-particle log Sobolev constant is bounded away from zero:
\begin{equation}\label{Nlogsobolevconstant}
   0<\lambda_{\LSI}^N:=\inf_{\rho^N\in\mathcal{P}(\Omega^N)\setminus \{M_N\}}\frac{\beta^{-1}\overline{\mathcal{I}}(\rho^N|M_N)}{\overline{\mathcal{E}}(\rho^N|M_N)}.
\end{equation}
Following the classical work of Bakry--Emery \cite{bakry2013analysis, Ledoux2001, Helffer2002}, we can find mild conditions for the positivity of the log Sobolev constant whenever the domain $\Omega$ is $\R^d$.
\begin{customthm}{C}\label{thm:BakryEmery}
Under ~\cref{A1,A2}, if there exists $R>1$ and $\lambda>0$ such that 
\begin{equation}\label{BEcondition}
D^2H^N(x)\ge \lambda I^{Nd\times Nd}\qquad\mbox{for every $|x|>R$,}
\end{equation}
then we have that $\lambda_{\LSI}^N>0$.
\end{customthm}
\begin{remark}
The convexity condition \eqref{BEcondition} in the far field can arise  from either the convexity of the interaction or the confining potential. We expect that the sharp condition for the Gibbs measure $M_N$ to satisfy $\lambda_{\LSI}^N>0$ uniformly in $N$ is related to the behavior of the mean field limit dissipation inequality \eqref{logsobolevconstant}. We will discuss this in more detail in \cref{conjecture}.
\end{remark}

For the mean field limit, we can perform a similar analysis with the relative mean field energy. More specifically, given $\rho(t)$ the solution to \eqref{McKean} we can differentiate to obtain the dissipation
\begin{equation}\label{dissipation}
    \frac{\dx}{\dx t} E^{MF}[\rho(t)]-\inf E^{MF}=-\int_{\Omega} |\beta^{-1}\nabla\log\rho(t)+\nabla W\star\rho(t)+\nabla V|^2\rho(t)\;\dx x=:-D(\rho(t)).
\end{equation}
Hence, we obtain exponential decay of the mean field energy to its minimum value, as long as the so-called infinite colume log Sobolev constant, given by
\begin{equation}\label{logsobolevconstant}
    0<\lambda_{\LSI}^\infty:=\inf_{\substack{\rho\in\mathcal{P}(\Omega)\\ \rho\notin \mathcal{K}}} \frac{D(\rho)}{E^{MF}[\rho(t)]-\inf E^{MF}},
\end{equation}
is positive, where
\begin{equation}
\mathcal{K}=\{\rho\in\mathcal{P}(\Omega)\;:\; E^{MF}[\rho]=\inf E^{MF}\}.    
\end{equation}
In both cases, when the log Sobolev constant is positive we can show that the relative energy behaves quadratically with respect to the 2-Wasserstein distance. This is essentially the content of Talagrand's inequality \cite{talagrand1995concentration}. One of our contributions in this paper is a new proof of a generalised version of this inequality using gradient flow techniques.
\begin{theorem}\label{thm:talagrand}
Under~\cref{A1,A2},
\begin{equation}\label{talagrand}
   E^N[\rho^N]-E^{N}[M_N]\ge  \frac{\lambda^N_{\LSI}}{2}\overline{d}^2_2(\rho^N,M_N)\qquad\mbox{and}\qquad E^{MF}[\rho]-\inf E^{MF}\ge  \frac{\lambda^\infty_{\LSI}}{2}d^2_2(\rho,\mathcal{K}),
\end{equation}
where $\mathcal{K}$ is the set of minimisers of $E^{MF}$ and $
d^2_2(\rho,\mathcal{K})=\inf_{\mu\in\mathcal{K}}d_2^2(\rho,\mu).$
\end{theorem}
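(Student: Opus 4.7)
Both inequalities in \eqref{talagrand} follow from the Otto--Villani principle implemented directly inside the $2$-Wasserstein gradient flow formalism of \cref{wellposedness}. The plan is to run the dissipative flow starting from the non-equilibrium measure towards its equilibrium (resp.\ towards the minimising set), and to integrate the metric speed along the trajectory against the exponential decay of the relative energy furnished by the LSI. The only substantive novelty is in the mean-field case, where the possibly non-singleton set of minimisers $\mathcal{K}$ has to be accommodated.

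\textbf{Particle-level inequality.} First I would let $\rho^N(t)$ be the $\bar d_2$-gradient flow of $E^N$ starting from $\rho^N$ (guaranteed by \cref{wellposedness}), and set $\Phi(t) := E^N[\rho^N(t)] - E^N[M_N] = \bar{\mathcal{E}}(\rho^N(t)|M_N)$. The dissipation identity \eqref{eq:dissipation} reads $\Phi'(t) = -\beta^{-1}\bar{\mathcal{I}}(\rho^N(t)|M_N)$, and the LSI \eqref{Nlogsobolevconstant} gives $\Phi'(t) \leq -\lambda^N_{\LSI}\Phi(t)$, hence $\Phi(t)\to 0$ exponentially; combined with a scaled Pinsker inequality and coercivity of $V$, this forces $\rho^N(t)\to M_N$ in $\bar d_2$. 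The key chain-rule step is then
\begin{equation*}
\frac{d}{dt}\sqrt{\Phi(t)} = \frac{\Phi'(t)}{2\sqrt{\Phi(t)}} \leq -\frac{\sqrt{\lambda^N_{\LSI}}}{2}\,|\dot\rho^N(t)|_{\bar d_2},
\end{equation*}
obtained by rewriting $-\Phi'$ as the squared metric slope $|\dot\rho^N|_{\bar d_2}^2$ (up to the $\beta$-normalisation inherent in the gradient flow structure) and then inserting the pointwise LSI bound $|\dot\rho^N|_{\bar d_2}\geq \sqrt{\lambda^N_{\LSI}\,\Phi}$. Integrating over $[0,\infty)$ and using lower semicontinuity of the Wasserstein length,
\begin{equation*}
\bar d_2(\rho^N, M_N)\leq \int_0^{\infty} |\dot\rho^N(t)|_{\bar d_2}\, dt \leq \frac{2}{\sqrt{\lambda^N_{\LSI}}}\sqrt{\Phi(0)},
\end{equation*}
and squaring delivers the first inequality (up to reconciling the factor of two with the precise LSI normalisation adopted in the paper).

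\textbf{Mean field inequality.} The same argument runs with the $d_2$-gradient flow $\rho(t)$ of $E^{MF}$ and $\Psi(t) := E^{MF}[\rho(t)] - \inf E^{MF}$, using \eqref{dissipation} and \eqref{logsobolevconstant}. The new ingredient is that the flow need not converge to a single point: exponential decay of $\Psi$ together with the coercivity of $E^{MF}$ (\cref{Ebound}) renders $\{\rho(t)\}_{t\geq 0}$ precompact in $d_2$, and any accumulation point as $t\to\infty$ must lie in $\mathcal{K}$ by lower semicontinuity of $E^{MF}$. Running the chain-rule computation up to time $T$ and selecting a subsequence $T_k\to\infty$ along which $\rho(T_k)\to\mu_\infty\in\mathcal{K}$,
\begin{equation*}
d_2(\rho, \mathcal{K}) \leq d_2(\rho, \mu_\infty) \leq \liminf_{k\to\infty} \int_0^{T_k} |\dot\rho(t)|_{d_2}\, dt \leq \frac{2}{\sqrt{\lambda^\infty_{\LSI}}}\sqrt{\Psi(0)},
\end{equation*}
which squares to the desired bound.

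\textbf{Main obstacle.} The principal technical point is rigorously justifying the energy dissipation identity $\Phi'(t) = -|\dot\rho^N(t)|_{\bar d_2}^2$ and the chain rule for $\sqrt{\Phi}$ when solutions are merely distributional and the potentials only semi-convex; this is afforded by the EVI theory for $\lambda$-convex functionals in \cite{ambrosio2008gradient}, but must be invoked with care, in particular to ensure that the metric speed of the flow equals the slope almost everywhere in time. The subsidiary difficulty, specific to the mean-field case, is handling $d_2(\rho(t),\mathcal{K})\to 0$ without assuming $\mathcal{K}$ is a singleton; one cannot pass to the limit inside the distance directly, and must instead argue along a convergent subsequence via the compactness and lower-semicontinuity argument above.
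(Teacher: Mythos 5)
Your proposal is correct and takes essentially the same route as the paper's proof: run the $2$-Wasserstein gradient flow, use the LSI to bound $\tfrac{\dx{}}{\dx{t}}$ of the square root of the relative energy by a multiple of the metric speed, integrate the length (Benamou--Brenier), and identify an accumulation point of the flow in $\mathcal{K}$ along a subsequence using lower semicontinuity. The only minor differences are that the paper identifies the limit point via LaSalle's invariance principle together with the observation that $\lambda^\infty_{\LSI}>0$ excludes non-minimising steady states (rather than your exponential-decay-plus-lsc argument, which works just as well and avoids LaSalle), and the factor-of-two bookkeeping you flag is a normalisation issue that is equally loose in the paper's own computation.
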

\noindent An optimal transport-based proof of inequality \eqref{talagrand} for $E^N$ and $E^{MF}$ can be found in \cite[Theorem 22.17]{villani2008optimal}. We also refer to \cite{CR} for a proof using entropic interpolation. In \Cref{sec:Talagrand}, we provide a different more intuitive proof of \eqref{talagrand} for general energies $E:\mathcal{P}(\Omega)\to\R\cup \{+\infty\}$. Our strategy is to use the associated gradient flow structure and what is sometimes referred to as Otto calculus~\cite{Otto2001}, a formal Riemannian calculus on $(\mathcal{P}(\Omega),d_2)$. We should note that one of the main differences in Talagrand's inequality for the $N$-particle energy and for the mean field energy is that the set of minimisers $\mathcal{K}$ does not need to be a single point in the mean field case.

Having established the need for understanding the behavior of the log Sobolev constant, our first result relates the limit of the particle system log Sobolev constant \eqref{Nlogsobolevconstant} with the mean field or infinite volume log Sobolev constant\eqref{logsobolevconstant}:
\begin{theorem}\label{thm:degeneracy}
Under \cref{A1,A2}, we have
\begin{equation}
\limsup_{N\to\infty} \lambda_{\LSI}^N\le \lambda_{\LSI}^\infty.
\end{equation}
Moreover, if the mean field energy $E^{MF}$ \eqref{meanfieldenergy} admits a critical point that is not a minimiser, then $\lambda_{\LSI}^\infty=0$, and there exists $C>0$ such that
\begin{equation}
    \lambda_{\LSI}^N\le \frac{C}{N}.
\end{equation}
\end{theorem}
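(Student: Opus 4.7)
My strategy is to establish both bounds by evaluating the variational ratio defining $\lambda_{\LSI}^N$ at carefully chosen tensor-product test measures $\rho^N = \rho^{\otimes N}$, and to read off the corresponding mean-field quantities $E^{MF}[\rho]-\inf E^{MF}$ and $D(\rho)$ in the limit $N\to\infty$. The three main ingredients will be (a) the convergence $E^N[\rho^{\otimes N}] \to E^{MF}[\rho]$, immediate from the explicit form of $H_N$ and the trivial observation $\frac{N-1}{N}\int W\,\rho\otimes\rho \to \int W\,\rho\otimes\rho$ (using $W(x,x)=0$); (b) the convergence $E^N[M_N] \to \inf E^{MF}$, a consequence of the Messer--Spohn $\Gamma$-convergence recalled in \Cref{thm:MS} together with the minimality of $M_N$ for $E^N$; and (c) the explicit gradient identity
\begin{equation*}
\nabla_i \log \frac{\rho^{\otimes N}}{M_N}(x) \;=\; \nabla \log\rho(x_i) \;+\; \beta\,\nabla V(x_i) \;+\; \frac{\beta}{N}\sum_{j=1}^N \nabla_1 W(x_i,x_j),
\end{equation*}
which, combined with an $L^2(\rho^{\otimes N})$ variance computation for the empirical sum $\frac{1}{N}\sum_j \nabla_1 W(x_i,x_j)$, yields $\overline{\mathcal{I}}(\rho^{\otimes N}|M_N) \to \beta^2 D(\rho)$.

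For the first inequality, fix any $\rho \in \cP(\Omega)\setminus\mathcal{K}$ with $D(\rho) < +\infty$. Combining (a)--(c), the quotient $\beta^{-1}\overline{\mathcal{I}}(\rho^{\otimes N}|M_N)/\overline{\mathcal{E}}(\rho^{\otimes N}|M_N)$ converges to $D(\rho)/(E^{MF}[\rho]-\inf E^{MF})$. Letting $\rho$ run along a minimising sequence for the infimum in \eqref{logsobolevconstant} and diagonalising in $N$ delivers $\limsup_N \lambda_{\LSI}^N \le \lambda_{\LSI}^\infty$.

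For the second statement, the degeneracy $\lambda_{\LSI}^\infty = 0$ is immediate. Differentiating the self-consistency equation \eqref{self-consistency} gives $\beta^{-1}\nabla \log\rho_* + \nabla V + \nabla W\star \rho_* \equiv 0$, so $D(\rho_*) = 0$, while non-minimality ensures $\rho_* \notin \mathcal{K}$ and $E^{MF}[\rho_*]-\inf E^{MF} > 0$; the ratio in \eqref{logsobolevconstant} therefore vanishes at $\rho_*$. For the $C/N$ bound on $\lambda_{\LSI}^N$, I again take $\rho^N = \rho_*^{\otimes N}$. The entropy $\overline{\mathcal{E}}(\rho_*^{\otimes N}|M_N)$ is bounded below by a positive constant for $N$ large by the part-(i) argument applied to $\rho_*$, while the self-consistency relation exactly cancels the $O(1)$ deterministic contribution in the gradient identity above, leaving the centered empirical average
\begin{equation*}
\nabla_i \log \frac{\rho_*^{\otimes N}}{M_N}(x) \;=\; \frac{\beta}{N}\sum_{j=1}^N \Bigl( \nabla_1 W(x_i,x_j) - \nabla W\star\rho_*(x_i)\Bigr).
\end{equation*}
Squaring and integrating against $\rho_*^{\otimes N}$, all off-diagonal cross terms vanish by independence of the $x_j$'s and the $\rho_*$-centering of each summand, so $\overline{\mathcal{I}}(\rho_*^{\otimes N}|M_N) = O(1/N)$ with leading constant proportional to the one-site variance $\int\!\int|\nabla_1 W(x,y)-\nabla W\star\rho_*(x)|^2\rho_*(x)\rho_*(y)\,\dx{x}\dx{y}$; dividing by the entropy yields $\lambda_{\LSI}^N \le C/N$.

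The main technical point is the $L^2(\rho^{\otimes N})$ variance estimate used in (c) and in the part-(ii) computation: it requires $|\nabla_1 W|^2$ to be integrable against $\rho^{\otimes 2}$, which is precisely the role of the doubling bound \eqref{doubling} in \cref{A2} combined with the coercivity of $V$ from \cref{A1}. A secondary point, specific to part (ii), is securing enough regularity of the critical point $\rho_*$ to make the above formal computation rigorous; this comes from the representation $\log\rho_* = -\beta(V + W\star\rho_*) + C_*$ inherited directly from \eqref{self-consistency}, which transfers the regularity of $V$ and $W\star\rho_*$ onto $\rho_*$ and hence onto $\nabla\log\rho_*$.
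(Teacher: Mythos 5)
Your overall strategy is the paper's: test the LSI ratio on chaotic measures $\rho^{\otimes N}$, and for the second statement use a non-minimising critical point $\rho_*$, the cancellation coming from the self-consistency relation, and a variance computation showing $\overline{\mathcal{I}}(\rho_*^{\otimes N}|M_N)=O(1/N)$ after scaling. Your part (ii) is essentially the paper's \cref{lem:fisher} combined with the convergence $\overline{\mathcal{E}}(\rho_*^{\otimes N}|M_N)\to E^{MF}[\rho_*]-\inf E^{MF}>0$ (which the paper gets from \cref{thm:MS} and \cref{cor:convergence}, and you get from the direct product computation plus Messer--Spohn); there the computation is legitimate because critical points have the explicit Gibbs form of \cref{prop:steadystate}, so their decay dominates the growth allowed by \eqref{doubling}. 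That half is fine and at the same level of rigour as the paper.

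The gap is in part (i). For a general $\rho\in\cP(\Omega)\setminus\mathcal{K}$ with finite energy and $D(\rho)<\infty$, your direct expansion of $\overline{\mathcal{I}}(\rho^{\otimes N}|M_N)$ is not justified under \cref{A1,A2} alone, and your stated justification is incorrect: the doubling bound \eqref{doubling} controls $|\nabla_1 W|$ only \emph{linearly} by $1+|W|+V(x)+V(y)$, so $|\nabla_1 W|^2\lesssim 1+W^2+V(x)^2+V(y)^2$, while finite energy only gives $W\in L^1(\rho\otimes\rho)$ and $V\in L^1(\rho)$, not square integrability; moreover $D(\rho)<\infty$ controls only the combination $\beta^{-1}\nabla\log\rho+\nabla V+\nabla W\star\rho$ in $L^2(\rho)$, not the individual terms, so the cross terms and the variance in your computation need not be finite or even well defined, and a minimising sequence for \eqref{logsobolevconstant} need not consist of measures for which they are. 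This is exactly the low-regularity obstruction ($\Delta V$, $\Delta W$ merely measures, potentials only semi-convex) that the paper's \cref{thm:MS} is designed to circumvent, via the metric-slope representation of the Fisher information, regularisation along the flow, and lower semicontinuity. To close the gap you must either invoke \cref{thm:MS}/\cref{cor:convergence} for the limits of $\overline{\mathcal{I}}(\rho^{\otimes N}|M_N)$ and $\overline{\mathcal{E}}(\rho^{\otimes N}|M_N)$, or supply an approximation argument showing the infimum in \eqref{logsobolevconstant} is unchanged when restricted to $\rho$ with the extra integrability your computation needs; neither is in your proposal. (A minor bookkeeping point: your own identity gives $\overline{\mathcal{I}}(\rho^{\otimes N}|M_N)\to\beta^2 D(\rho)$, so the ratio $\beta^{-1}\overline{\mathcal{I}}/\overline{\mathcal{E}}$ tends to $\beta D(\rho)/(E^{MF}[\rho]-\inf E^{MF})$ rather than the unweighted mean-field ratio; the paper is equally loose with factors of $\beta$ here, but you should fix one normalisation and keep it consistent.)
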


Our result complements similar results that have been obtained for unbounded spin systems~\cite{yoshida2003} and the references therein. On the other hand, when $\lambda^\infty_{\LSI}>0$, we can show that the {\it regularized log Sobolev constant}
\begin{equation}\label{e:LSI_reg}
    \lambda_{\LSI}^{N,\eps}:=\inf_{\rho^N:\;\overline{\mathcal{E}}(\rho^N|M_N)>\eps }\frac{\beta^{-1}\overline{\mathcal{I}}(\rho^N|M_N)}{\overline{\mathcal{E}}(\rho^N|M_N)}
\end{equation}
does not degenerate. More specifically,
\begin{equation}
    \lim_{N\to\infty}\lambda_{\LSI}^{N,\eps}\ge \lambda^\infty_{\LSI}>0.
\end{equation}
This result implies that relaxation to neighborhoods of the stationary state of the particle dynamics $\eqref{eq:forwardK}$ happens exponentially fast, uniformly in $N$. %Similar results for unbounded systems are summarized in~\cite{yoshida2003}.
\begin{theorem}\label{thm:contraction}
Under \cref{A1,A2}, assume that $\lambda^\infty_{\LSI}>0$, and that $\rho_{\mathrm{\mathrm{in}}}$ in~\eqref{eq:particle system} has finite energy and bounded higher order moments, 
\begin{equation}\label{hyp:contraction}
E^{MF}[\rho_{\mathrm{\mathrm{in}}}]<\infty\qquad\mbox{and}\qquad \int_{\Omega}|x|^{2+\delta}\;\dx \rho_{\mathrm{\mathrm{in}}}<\infty,\qquad\mbox{for some $\delta>0$.}   
\end{equation}
Then, for every $\eps>0$, there exists $N_0 \in \N$, such that for every $N>N_0$ we have
$$
\overline{\mathcal{E}}(\rho^N(t)|M_N)\le \max\left\{\eps, e^{-\frac{1}{2}\lambda_{\LSI}^\infty t}\;\overline{\mathcal{E}}(\rho^{\otimes N}_{\mathrm{in}}|M_N)\right\}.
$$
\end{theorem}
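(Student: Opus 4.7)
The strategy is to reduce \Cref{thm:contraction} to a non-degeneracy statement for the regularized log Sobolev constant $\lambda_{\LSI}^{N,\eps}$ from~\eqref{e:LSI_reg}, and then apply a standard Gronwall argument to the entropy dissipation identity~\eqref{eq:dissipation}. The key lemma is
\begin{equation*}
\liminf_{N\to\infty}\lambda_{\LSI}^{N,\eps}\ \ge\ \lambda_{\LSI}^{\infty} \qquad \text{for every } \eps>0,
\end{equation*}
which is exactly the quantitative non-degeneracy of the regularized LSI constant announced just before the theorem statement.

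I would prove this lemma by compactness and contradiction. If it fails, there is $\eta>0$ and a (sub)sequence $\rho^N\in\mathcal{P}_{\mathrm{sym}}(\Omega^N)$ with $\overline{\mathcal{E}}(\rho^N\,|\,M_N)\ge\eps$ yet $\beta^{-1}\overline{\mathcal{I}}(\rho^N\,|\,M_N)\le(\lambda_{\LSI}^\infty-\eta)\,\overline{\mathcal{E}}(\rho^N\,|\,M_N)$. The uniform entropy/energy bound provides enough tightness to extract a de Finetti limit point $P_\infty\in\mathcal{P}(\mathcal{P}(\Omega))$ in the sense of~\eqref{def:DFHS}. Using the $\Gamma$-convergence machinery of~\Cref{sec:DFHS}, the scaled relative energy $\overline{\mathcal{E}}(\rho^N\,|\,M_N)=E^N[\rho^N]-E^N[M_N]$ converges to $\int (E^{MF}[\rho]-\inf E^{MF})\,\dx P_\infty(\rho)$, while the scaled Fisher information satisfies a $\Gamma$-liminf inequality of the form $\int \beta^{-1} D(\rho)\,\dx P_\infty(\rho)\le\liminf_N \beta^{-1}\overline{\mathcal{I}}(\rho^N\,|\,M_N)$. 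Applying the mean field LSI~\eqref{logsobolevconstant} pointwise under the integral against $P_\infty$ and recombining then contradicts the assumed deficit $\eta$.

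With the lemma in hand, fix $\eps>0$ and choose $N_0$ so that $\lambda_{\LSI}^{N,\eps}\ge\tfrac12\lambda_{\LSI}^\infty$ for all $N>N_0$. Define $\tau_N:=\inf\{t\ge 0\,:\,\overline{\mathcal{E}}(\rho^N(t)\,|\,M_N)\le\eps\}$. For $t\in[0,\tau_N)$, the regularized LSI combined with~\eqref{eq:dissipation} yields
\begin{equation*}
\frac{\dx}{\dx t}\overline{\mathcal{E}}(\rho^N(t)\,|\,M_N) \ \le\ -\tfrac12\lambda_{\LSI}^\infty\,\overline{\mathcal{E}}(\rho^N(t)\,|\,M_N),
\end{equation*}
and Gronwall gives exponential decay at rate $\tfrac12\lambda_{\LSI}^\infty$ starting from $\overline{\mathcal{E}}(\rho_{\mathrm{in}}^{\otimes N}\,|\,M_N)$. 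For $t\ge\tau_N$, non-negativity of the Fisher information keeps $t\mapsto\overline{\mathcal{E}}(\rho^N(t)\,|\,M_N)$ monotone decreasing and hence below $\eps$. Taking the maximum of the two regimes produces the bound in the statement; hypothesis~\eqref{hyp:contraction} is used to ensure both that $E^{MF}[\rho_{\mathrm{in}}]<\infty$, so that $\overline{\mathcal{E}}(\rho_{\mathrm{in}}^{\otimes N}\,|\,M_N)$ is finite, and that the tightness required in the lemma holds.

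The principal obstacle is the $\Gamma$-liminf inequality for the scaled Fisher information. Unlike the relative energy, it is sensitive to $N$-dependent cross-correlations coming from $W$ inside $\nabla\log(\rho^N/M_N)$, and transferring the microscopic dissipation to the mean field dissipation $D$ in the de Finetti limit requires the doubling bound~\eqref{doubling} together with the Otto calculus on $(\mathcal{P}(\Omega),d_2)$ developed in~\Cref{sec:DFHS}. Once that lower semicontinuity is available, the remainder is purely a Gronwall-type ODE estimate.
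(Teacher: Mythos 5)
Your overall architecture (compactness plus the $\Gamma$-liminf for the Fisher information, strong convergence of the scaled entropy, the mean field LSI applied under the integral against $P_\infty$, and then a Gronwall step) is the same circle of ideas the paper uses, but you have routed it through a key lemma that is strictly stronger than what the paper proves, and your sketch of that lemma has a genuine gap at the compactness step. The infimum in~\eqref{e:LSI_reg} ranges over \emph{all} $\rho^N\in\cP_{\sym}(\Omega^N)$ with $\overline{\mathcal{E}}(\rho^N|M_N)>\eps$; for such a deficit sequence you have neither an upper bound on the entropy nor any moment control. Without an upper entropy bound, the deficit inequality $\beta^{-1}\overline{\mathcal{I}}\le(\lambda^\infty_{\LSI}-\eta)\overline{\mathcal{E}}$ does not bound the Fisher information, and the Fisher bound is exactly what \cref{cor:convergence} (via the HWI inequality, \cref{thm:HWI}) needs to upgrade the $\liminf$ of the entropy to the strong convergence you use in the denominator; if $\overline{\mathcal{E}}(\rho^N|M_N)\to\infty$ along the deficit sequence, your argument produces no contradiction at all. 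Moreover, the metric de Finetti compactness you invoke, \cref{prop:compactness}, requires a uniform moment bound of order $\gamma>2$ on the first marginal, which an arbitrary infimizing sequence need not satisfy when $\Omega=\R^d$ and $V$ grows only like $|x|^\delta$ with small $\delta$ (\cref{A1}); the hypothesis~\eqref{hyp:contraction} on $\rho_{\mathrm{in}}$ cannot supply tightness for measures that have nothing to do with the flow, contrary to your closing remark.

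The paper avoids both problems by never proving the regularized-LSI lemma in that generality: it argues by contradiction \emph{along the flow}. The assumed failure of the stated bound forces times $t_i$ bounded as in~\eqref{boundont}, hence $s_i\le T(\eps)<\infty$; the dissipation identity~\eqref{eq:dissipation} bounds $\overline{\mathcal{E}}(\rho^{N_i}(s_i)|M_{N_i})$ by the initial entropy, which is finite and convergent by~\eqref{hyp:contraction}; the case $\overline{\mathcal{I}}(\rho^{N_i}(s_i)|M_{N_i})\to\infty$ is then dispatched separately because the entropy stays bounded; and the $2+\delta$ moments of $\rho_{\mathrm{in}}$ propagate along the flow for times up to $T(\eps)$ (using the $K$-convexity of the potentials), which is precisely what makes \cref{prop:compactness} and then \cref{thm:MS}, \cref{cor:convergence} applicable at the times $s_i$. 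To repair your proposal you should either restate your lemma with the infimum restricted to measures satisfying uniform $(2+\delta)$-moment and entropy (or Fisher) bounds — and then check these bounds along the flow exactly as above before applying it — or abandon the intermediate lemma and run the contradiction directly on the trajectory, which is the paper's proof.
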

\begin{remark}
For chaotic measures, we can take the limit $N\to\infty$ of the relative entropy obtaining
$$
\lim_{N \to \infty}\overline{\mathcal{E}}(\rho^{\otimes N}_{\mathrm{in}}|M_N)= E^{MF}[\rho_{\mathrm{in}}]-\inf E^{MF} \, .
$$
This is discussed in~\cref{thm:MS}.
\end{remark}

Unfortunately, we are not able to fully characterise the limit of $\lambda^N_{\LSI}$ in terms of the mean field limit. Despite this, Theorem~\ref{thm:degeneracy} and Theorem~\ref{thm:contraction}, provide us with evidence which is convincing enough to make the following conjecture.

\begin{conjecture}\label{conjecture}
Under \cref{A1,A2}, we have the equality
$$
\lim_{N\to\infty}\lambda^N_{\LSI}=\lambda^\infty_{\LSI}.
$$
\end{conjecture}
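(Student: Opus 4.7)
In view of \Cref{thm:degeneracy}, only the matching lower bound $\liminf_{N\to\infty}\lambda^N_{\LSI}\ge \lambda^\infty_{\LSI}$ remains. The natural strategy, consistent with the gradient-flow and $\Gamma$-convergence viewpoint adopted throughout the paper, is to extract near-minimisers of the $N$-particle LSI ratio, pass to the de Finetti/Hewitt--Savage limit, and split the analysis into a non-degenerate regime handled by a $\Gamma$-$\liminf$ for the Fisher information, and a degenerate regime requiring spectral analysis near the minimiser set $\cK$.

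Concretely, fix $\rho^N\in\cP_{\sym}(\Omega^N)$ with
\begin{equation*}
\beta^{-1}\overline{\cI}(\rho^N|M_N)\le (\lambda^N_{\LSI}+\tfrac{1}{N})\,\overline{\cE}(\rho^N|M_N),
\end{equation*}
and along a subsequence extract $\rho^N\rightharpoonup P_\infty\in \cP(\cP(\Omega))$. The first ingredient is the analogue of Messer--Spohn for the scaled entropy, implicit in~\cref{sec:DFHS}, which yields $\lim_N \overline{\cE}(\rho^N|M_N) = \int (E^{MF}-\inf E^{MF})\,\dx P_\infty$. The second, crucial ingredient is a $\Gamma$-$\liminf$ inequality for the (scaled) Fisher information,
\begin{equation*}
\liminf_{N\to\infty}\beta^{-1}\overline{\cI}(\rho^N|M_N)\ge \int_{\cP(\Omega)} D(\rho)\,\dx P_\infty(\rho),
\end{equation*}
which I would prove by identifying $\overline{\cI}^{1/2}$ with the metric slope of $E^N$ on $(\cP(\Omega^N),\overline{d}_2)$, then combining \eqref{frakd2} with the lower semicontinuity of slopes along $\Gamma$-converging energies (a property used implicitly in \Cref{thm:contraction}) and Fatou's lemma along the de Finetti disintegration. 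Coupled with the mean field dissipation inequality $D(\rho)\ge \lambda^\infty_{\LSI}(E^{MF}[\rho]-\inf E^{MF})$ this yields the desired bound, provided $\overline{\cE}(\rho^N|M_N)$ stays bounded away from zero.

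The main obstacle is the degenerate regime $\overline{\cE}(\rho^N|M_N)\to 0$, where $P_\infty$ concentrates on $\cK$ and the $\Gamma$-$\liminf$ above collapses to the uninformative $0\ge 0$. In this regime $\lambda^N_{\LSI}$ reduces to twice the spectral gap of the Witten Laplacian $-\beta^{-1}\Delta+\nabla H_N\cdot\nabla$ on $L^2(M_N)$, and the goal becomes to show that this gap dominates $\lambda^\infty_{\LSI}/2$ in the limit. A promising approach is the two-scale decomposition of \cite{OR07,GOVW09} in the spirit of \Cref{thm:twoscale}: pair each configuration with its empirical measure, transfer the linearised eigenvalue problem to the mean field level where the Hessian of $E^{MF}$ at a minimiser $\rho_\beta\in\cK$ governs the macroscopic gap, and handle the microscopic fluctuations by a conditional LSI uniform in the macroscopic coordinate. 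The genuinely hard point is that $\lambda^\infty_{\LSI}$ is a \emph{global} mean field quantity whereas such a spectral argument only returns second-order information at $\rho_\beta$; bridging this gap likely requires an additional argument certifying that the global mean field LSI ratio is already attained, in the limit, by local perturbations of $\rho_\beta$, a property strictly stronger than what \Cref{thm:contraction} provides and which I expect to be the main source of difficulty in turning this plan into a theorem.
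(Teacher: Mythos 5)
This statement is \Cref{conjecture}: the paper offers \emph{no proof} of it. The authors state explicitly that they are unable to characterise the limit of $\lambda^N_{\LSI}$, and they record the equality only as a conjecture supported by \Cref{thm:degeneracy} (which gives $\limsup_N \lambda^N_{\LSI}\le\lambda^\infty_{\LSI}$) and by the asymptotic non-degeneracy of the regularized constant $\lambda^{N,\eps}_{\LSI}$ of \eqref{e:LSI_reg}, which underlies \Cref{thm:contraction}. Your proposal should therefore not be read against a reference proof; it can only be judged on whether it closes the conjecture, and by your own account it does not. What you outline in the non-degenerate regime is, in substance, a reconstruction of what the paper already establishes: the entropy convergence is \Cref{cor:convergence} (note it requires a uniform bound on $\overline{\cI}$ and, via \Cref{prop:compactness}, uniform higher moments to extract $P_\infty$ at all --- neither is automatic for arbitrary near-optimisers of the LSI ratio on $\R^d$, so even this half of your plan needs the case analysis on $\overline{\cI}$ and a moment argument as in the proof of \Cref{thm:contraction}), and the Fisher-information $\Gamma$-$\liminf$ is exactly \eqref{liminf} in \Cref{thm:MS}, not something you need to re-derive from slopes.

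The genuine gap is the one you flag, and it is worth being precise about why your suggested fix is unlikely to suffice as stated. In the regime $\overline{\cE}(\rho^N|M_N)\to 0$ the two-scale strategy of \Cref{thm:twoscale} (or any linearisation at a minimiser $\rho_\beta\in\cK$) can at best produce, in the limit, the spectral-gap-type constant of the linearised operator $\cL_{\rho_\beta}$ in the weighted inner product of \Cref{cA}, i.e.\ the second-order expansion \eqref{Eexpansion}--\eqref{linlogsob} of the ratio $D/(E^{MF}-\inf E^{MF})$ near $\cK$. But $\lambda^\infty_{\LSI}$ is the \emph{global} infimum of that ratio, and the paper itself only shows (in \cref{sec:phasetransition}) that positivity of the global constant implies the local spectral gap, not that the two quantities coincide; when the global infimum is attained far from $\cK$, a local argument near $\cK$ gives a lower bound by the wrong (possibly larger) constant, which is fine for the inequality $\liminf_N\lambda^N_{\LSI}\ge\lambda^\infty_{\LSI}$ only if one can also rule out that microscopic configurations near $M_N$ see an effectively smaller constant than the macroscopic linearisation predicts --- this interpolation between the Hessian of $E^{MF}$ at $\cK$ and the $N$-particle Witten Laplacian gap, uniformly in $N$, is precisely the missing ingredient, and it is not supplied by \Cref{thm:twoscale} (which is perturbative in $\beta$ or in the interaction strength) nor by \Cref{thm:bb} (which is model-specific). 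So your assessment is accurate: the proposal is a reasonable programme aligned with the paper's methods, but the conjecture remains open and your sketch does not resolve it.
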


The results of our paper provide us with a strong indication that the absence of phase transitions (loosely defined to mean that the mean field limit has a unique stationary state), the non-degeneracy of the infinite volume log Sobolev constant, and the validity uniform-in-time propagation of chaos are all equivalent.

\subsection{Consequences of the non-degeneracy of the log Sobolev inequality} Bearing \cref{conjecture} in mind, we now explore the implications of the non-degeneracy of the LSI constant in the limit $N \rightarrow +\infty$. We begin by noticing that if the log Sobolev constant does not degenerate in $N$, then the invariant Gibbs measure $M_N$ of the $N$-particle system is well approximated by the unique minimiser of the mean field energy.
\begin{theorem}\label{thm:uniquelimit}
Under~\cref{A1,A2},  assume that $\limsup_{N\to\infty}\lambda^N_{\LSI}>0$. Then, there exists a unique steady state $\rho_\beta$ to \eqref{McKean}. Moreover, there exists $C>0$, such that
\begin{equation}
    \overline{d}^2_2(\rho_\beta^{\otimes N},M_N)\le \frac{2}{\lambda_{\LSI}^N}\overline{\mathcal{E}}(\rho_\beta^{\otimes N}|M_N)\le \frac{2}{(\lambda_{\LSI}^N)^2}\overline{\mathcal{I}}(\rho_\beta^{\otimes N}|M_N) \le \frac{C}{N}.
\end{equation}
\end{theorem}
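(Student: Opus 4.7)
The plan is to prove the uniqueness of the steady state and the quantitative estimates simultaneously by combining the generalised Talagrand inequality with an explicit bound on the relative Fisher information of the product state $\rho_\beta^{\otimes N}$. The two leftmost inequalities in the statement are structural: the middle one is the defining log-Sobolev inequality~\eqref{Nlogsobolevconstant} applied to $\rho^N=\rho_\beta^{\otimes N}$ (after absorbing the factor of $\beta^{-1}$), and the first is Theorem~\ref{thm:talagrand} applied to the $N$-particle energy. The substantive content is therefore the rightmost bound $\overline{\mathcal{I}}(\rho_\beta^{\otimes N}|M_N)\le C/N$.

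For the Fisher-information bound I would compute the score directly. For any steady state $\rho_\beta$ satisfying the self-consistency equation~\eqref{self-consistency}, using the identity $\nabla_1 W(x,x)=0$ (a consequence of symmetry and diagonal-vanishing of $W$), direct differentiation yields
\[
\nabla_{x_k}\log\!\left(\frac{\rho_\beta^{\otimes N}(x)}{M_N(x)}\right)
=\frac{\beta}{N}\sum_{j=1}^{N}\!\left[\nabla_1 W(x_k,x_j)-\int \nabla_1 W(x_k,y)\dx\rho_\beta(y)\right].
\]
Conditional on $x_k$, the terms with $j\ne k$ are i.i.d.\ and centred under $\rho_\beta^{\otimes N}$, while the $j=k$ term vanishes. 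Summing the conditional variances, averaging over $x_k\sim\rho_\beta$, and dividing by $N$ gives $\overline{\mathcal{I}}(\rho_\beta^{\otimes N}|M_N)\le C/N$, with $C$ depending only on $\beta$ and on $\iint|\nabla_1 W(x,y)|^2\dx\rho_\beta(x)\dx\rho_\beta(y)$.

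For uniqueness, the hypothesis $\limsup_N\lambda^N_{\LSI}>0$ together with Theorem~\ref{thm:degeneracy} gives $\lambda^\infty_{\LSI}>0$, and the contrapositive of the same theorem forces every critical point of $E^{MF}$ to be a minimiser. In particular, every steady state of~\eqref{McKean} lies in the minimiser set $\mathcal{K}$, and Theorem~\ref{Ebound} supplies at least one. If there were two distinct steady states $\rho_1\ne\rho_2$, the chain of inequalities applied to each would give $\overline{d}_2(\rho_i^{\otimes N},M_N)\to 0$ along the subsequence on which $\lambda^N_{\LSI}\ge c>0$. The triangle inequality would then yield $\overline{d}_2(\rho_1^{\otimes N},\rho_2^{\otimes N})\to 0$, but tensorisation gives $d_2^2(\rho_1^{\otimes N},\rho_2^{\otimes N})=Nd_2^2(\rho_1,\rho_2)$, so $\overline{d}_2(\rho_1^{\otimes N},\rho_2^{\otimes N})=d_2(\rho_1,\rho_2)$ is a positive constant independent of $N$, a contradiction.

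The main obstacle is closing the Fisher-information bound, which requires $\nabla_1 W\in L^2(\rho_\beta\otimes\rho_\beta)$. Squaring the doubling bound~\eqref{doubling} demands quadratic integrability of $V$ and $W$ under $\rho_\beta$, which is not immediate from $E^{MF}[\rho_\beta]<\infty$ alone. The remedy is to exploit the Euler--Lagrange form $\rho_\beta\propto\exp(-\beta(V+W\star\rho_\beta-C_*))$ implied by~\eqref{self-consistency}: the coercivity $V(x)\ge|x|^\delta$ from Assumption~\ref{A1} together with the $K_W$-convex lower bound on $W\star\rho_\beta$ produces sufficiently fast decay of $\rho_\beta$ to supply all the needed moments.
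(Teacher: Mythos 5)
Your proposal is correct, and its quantitative core coincides with the paper's: the chain of the generalised Talagrand inequality (\cref{thm:talagrand}) plus the defining inequality \eqref{Nlogsobolevconstant}, followed by a direct computation of the score of $\rho_\beta^{\otimes N}$ relative to $M_N$ via the self-consistency equation, is exactly the content of \cref{lem:rightscaling,lem:fisher}, and your integrability discussion mirrors the paper's appeal to \cref{prop:steadystate} together with the doubling condition \eqref{doubling}. One small slip: in your score formula the $j=k$ summand is $-\nabla_1 W\star\rho_\beta(x_k)$, not zero ($\nabla_1W(x_k,x_k)=0$ holds, but the centring term survives); it only contributes the harmless $\tfrac1N\int\abs{\nabla_1W\star\rho_\beta}^2\rho_\beta\,\dx{x}$ term appearing in the exact identity of \cref{lem:fisher}, so the $C/N$ bound is unaffected. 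Where you genuinely diverge is the uniqueness step. The paper takes two \emph{minimisers}, applies Talagrand to each, and sends the right-hand side to zero using the $\Gamma$-convergence fact $E^N[M_N]\to\inf E^{MF}$ (\cref{cor:convergence} and \eqref{e:part_fn_converg}), the exclusion of non-minimising critical points being left to \cref{thm:degeneracy}. You instead run the full Talagrand--LSI--Fisher chain for each steady state and conclude from the tensorisation identity $\overline{d}_2(\rho_1^{\otimes N},\rho_2^{\otimes N})=d_2(\rho_1,\rho_2)$; since the Fisher computation only requires $\rho_i$ to be a critical point, this argument applies verbatim to arbitrary steady states, so your preliminary reduction ``every critical point is a minimiser'' via \cref{thm:degeneracy} is in fact superfluous in your route. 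The trade-off: the paper's argument leans on the de Finetti/$\Gamma$-convergence machinery but needs no moment control on the competing state, whereas yours is more self-contained — the quantitative estimate you must prove anyway already implies uniqueness — at the price of invoking the Fisher bound (hence the moment bounds of \cref{prop:steadystate}) for both critical points; both are valid under $\limsup_{N\to\infty}\lambda^N_{\LSI}>0$, working along the subsequence on which $\lambda^N_{\LSI}$ stays bounded away from zero.
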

Interpolating the previous result with more standard propagation of chaos estimates that depend on the convexity constant of the potentials, we obtain the following uniform-in-time propagation of chaos result.
\begin{theorem}\label{thm:uniform} 
Under \cref{A1,A2}, let $\rho^N$ and $\rho$ denote the unique solutions to the particle \eqref{eq:forwardK}
and mean field \eqref{McKean} dynamics given by \cref{wellposedness}. Assume that $\rho_{\mathrm{in}}$ has finite energy
$$
E^{MF}[\rho_{\mathrm{in}}]<\infty \, ,
$$
that the gradient of the square of the interaction potential is uniformly integrable
\begin{equation}\label{hypintegrability}
\sup_{t\in[0,\infty]}\int_\Omega |\nabla_1 W|^2\star \rho(t)\rho(t)\;\dx{x}<\infty\, ,    
\end{equation}
and that $\liminf_{N\to\infty}\lambda^N_{\LSI}=:\lambda^\infty>0$. Then,
\begin{equation}
    \overline{d}_2(\rho^N(t),\rho^{\otimes N}(t))\le \frac{C}{N^\theta}\qquad\mbox{for all $t>0$,}
\end{equation}
where
$$
\theta=\begin{cases}
1/2 &\mbox{if $K_V+K_W(1-1/N)>0$}\\
\frac{1}{2}\frac{\lambda^\infty}{\lambda^\infty-2(K_V+K_W(1-1/N))}&\mbox{if $K_V+K_W(1-1/N)< 0$}
\end{cases}
$$
with $K_V$ and $K_W$ the convexity constants of $V$ and $W$ in~\cref{A1,A2}. In the case where $K_V+K_W(1-1/N)=0$, we can pick any $\theta<1/2$.
\end{theorem}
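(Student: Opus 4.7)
My plan is to combine two complementary bounds and interpolate between them at an optimally chosen time $t^\ast = O(\log N)$. The short-time bound comes from a synchronous coupling; the long-time bound comes from the fact that, under our hypothesis, both the particle system and the McKean--Vlasov dynamics are close to their respective equilibria, which in turn are close to one another by~\cref{thm:uniquelimit}.

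For the short-time bound I would construct the standard synchronous coupling: let $\bar X^1,\dots,\bar X^N$ be i.i.d.\ copies of the McKean process, each with law $\rho(t)$, driven by the same Brownian motions $B^i$ as the particles $X^i$ and started from the same initial data. Splitting the interaction drift into a part that uses the $K_V$-convexity of $V$ and the $K_W(1-1/N)$-convexity of the pair Hamiltonian (the factor $1-1/N$ appearing because $W(x,x)=0$ removes the diagonal $i=j$ from the sum and forces $\nabla_1 W(x,x)=0$) plus a mean-zero fluctuation whose conditional variance is of order $1/N$ thanks to the independence of the $\bar X^j$ and hypothesis~\eqref{hypintegrability}, It\^o's formula yields
\begin{equation*}
\frac{\dx}{\dx t} \E\abs{X_t^1-\bar X_t^1}^2 \le -2\bigl(K_V+K_W(1-1/N)\bigr)\,\E\abs{X_t^1-\bar X_t^1}^2 + \frac{C}{N}.
\end{equation*}
Gronwall's inequality then gives $\bar d_2^2(\rho^N(t),\rho^{\otimes N}(t)) \le C/N$ uniformly in $t$ when $K_V+K_W(1-1/N)\ge 0$ (so $\theta = 1/2$), and $\bar d_2^2 \le \frac{C}{N}\,e^{-2(K_V+K_W(1-1/N))t}$ in the non-convex regime.

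For the long-time bound I would apply the triangle inequality through two intermediate points,
\begin{equation*}
\bar d_2(\rho^N(t),\rho^{\otimes N}(t)) \le \bar d_2(\rho^N(t),M_N) + \bar d_2(M_N,\rho_\beta^{\otimes N}) + d_2(\rho_\beta,\rho(t)),
\end{equation*}
using that $\bar d_2$ tensorises so that $\bar d_2(\rho_\beta^{\otimes N},\rho^{\otimes N}(t)) = d_2(\rho_\beta,\rho(t))$. The middle term is $O(1/\sqrt N)$ by~\cref{thm:uniquelimit}, which applies since $\liminf_N\lambda_{\LSI}^N>0$ together with~\cref{thm:degeneracy} forces $\lambda_{\LSI}^\infty>0$ and hence uniqueness of $\rho_\beta$. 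The first term is controlled by combining the entropy contraction of~\cref{thm:contraction} with Talagrand's inequality (\cref{thm:talagrand}), noting that $\overline{\mathcal{E}}(\rho_{\mathrm{in}}^{\otimes N}|M_N)$ is uniformly bounded in $N$ thanks to $E^{MF}[\rho_{\mathrm{in}}]<\infty$ and the chaotic identity $\lim_N \overline{\mathcal E}(\rho_{\mathrm{in}}^{\otimes N}|M_N)=E^{MF}[\rho_{\mathrm{in}}]-\inf E^{MF}$ from~\cref{sec:DFHS}. The third term decays exponentially by combining exponential decay of $E^{MF}[\rho(t)]-\inf E^{MF}$ (valid since $\lambda^\infty_{\LSI}>0$) with Talagrand's inequality for $E^{MF}$, and since $\mathcal K=\{\rho_\beta\}$ this gives $d_2(\rho_\beta,\rho(t))$ with decay of order $\lambda^\infty$.

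To finish I would balance the two bounds at the time $t^\ast$ where the short-time growth $N^{-1/2}e^{\kappa t}$, with $\kappa:=\max\{0,-(K_V+K_W(1-1/N))\}$, meets the long-time exponential decay at rate $\alpha$ proportional to $\lambda^\infty$. Solving $e^{\kappa t^\ast}/\sqrt N = e^{-\alpha t^\ast}$ produces $t^\ast = O(\log N)$ and $\bar d_2 \le C N^{-\theta}$ with $\theta = \alpha/\bigl(2(\alpha+2\kappa)\bigr) = \tfrac12\,\lambda^\infty/\bigl(\lambda^\infty - 2(K_V+K_W(1-1/N))\bigr)$; the denominator arises precisely as the sum of the short-time growth rate and the long-time decay rate. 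The borderline case $K_V+K_W(1-1/N)=0$ is handled similarly from the linear-in-$t$ short-time bound $\bar d_2^2 \le Ct/N$, and interpolation against an exponential long-time bound yields any $\theta<1/2$. I expect the main technical obstacle to be the $\eps$-regularisation in~\cref{thm:contraction}: one must choose $\eps$ and $N$ simultaneously so that the exponential branch of the maximum is active throughout $[0,t^\ast]$, which is precisely why the hypothesis $\liminf\lambda_{\LSI}^N>0$ (rather than merely $\lambda_{\LSI}^\infty>0$) and the uniform-in-$N$ boundedness of the initial entropy are needed.
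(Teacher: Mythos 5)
Your overall architecture is the same as the paper's: a short-time estimate of order $N^{-1/2}e^{ct}$ (the paper obtains it by differentiating $\overline{d}_2^2(\rho^N,\rho^{\otimes N})$ along the two gradient flows and using displacement convexity of the entropy, which is the PDE-level counterpart of your synchronous coupling), and for large times the identical triangle inequality through $M_N$ and $\rho_\beta^{\otimes N}$, with $\overline{d}_2(M_N,\rho_\beta^{\otimes N})\le C N^{-1/2}$ from \cref{lem:rightscaling}/\cref{thm:uniquelimit} and $d_2(\rho(t),\rho_\beta)$ decaying exponentially by the mean-field LSI plus Talagrand. The genuine gap is in your treatment of the first term $\overline{d}_2(\rho^N(t),M_N)$, which you route through \cref{thm:contraction}. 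That theorem has the quantifier order ``for every $\eps>0$ there exists $N_0(\eps)$ such that for all $N>N_0$'', and its proof is a compactness/contradiction argument giving no rate for $N_0(\eps)$. Your interpolation needs $\overline{\mathcal{E}}(\rho^N(t)|M_N)\lesssim N^{-2\theta}$ for all $t\gtrsim\log N$ (equivalently, the exponential branch of the maximum active up to $t^\ast=O(\log N)$ and a bound of size $N^{-2\theta}$ beyond it), which forces $\eps=\eps_N\to 0$; ``choosing $\eps$ and $N$ simultaneously'' is then not permitted, since nothing guarantees $N>N_0(\eps_N)$. As written, the long-time bound fails for $t$ beyond a fixed, $\eps$-dependent horizon, and you only get $\overline{d}_2(\rho^N(t),M_N)\lesssim\sqrt{\eps}$ there, which does not vanish as $N\to\infty$.

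The repair is to bypass \cref{thm:contraction} altogether: the hypothesis $\liminf_N\lambda^N_{\LSI}=\lambda^\infty>0$ is an honest $N$-particle log Sobolev inequality, so Gr\"onwall applied to the dissipation identity \eqref{eq:dissipation} together with \eqref{Nlogsobolevconstant} gives $\overline{\mathcal{E}}(\rho^N(t)|M_N)\le e^{-\lambda^N_{\LSI}t}\,\overline{\mathcal{E}}(\rho_{\mathrm{in}}^{\otimes N}|M_N)$, and the $N$-particle Talagrand inequality of \cref{thm:talagrand} converts this into $\overline{d}_2(\rho^N(t),M_N)\le Ce^{-\lambda^\infty t/4}$ for $N$ large, with the prefactor uniformly bounded by the chaotic limit of the initial entropy, exactly as you note. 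This is what the paper does (see \eqref{firsterm}), and with this substitution your interpolation goes through; it also clarifies that the role of $\liminf_N\lambda^N_{\LSI}>0$ is to provide the uniform entropy decay rate, the uniform Talagrand constant, and the $O(N^{-1/2})$ closeness of $M_N$ to $\rho_\beta^{\otimes N}$ --- not to tune $\eps$ in \cref{thm:contraction}. Finally, your balancing bookkeeping is internally inconsistent (matching $N^{-1/2}e^{\kappa t}$ against $e^{-\alpha t}$ gives $\theta=\alpha/(2(\alpha+\kappa))$, not $\alpha/(2(\alpha+2\kappa))$, and your coupling rate differs by a factor of two from the paper's), but with the rates actually produced by the corrected argument this only shifts constants and reproduces the stated exponent $\theta=\tfrac12\,\lambda^\infty/(\lambda^\infty-2(K_V+K_W(1-1/N)))$.
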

\begin{remark}
The integrability assumption \eqref{hypintegrability} is trivially true when $W$ is uniformly Lipschitz. Also, this assumption is satisfied when the potentials are attractive enough in the far field, i.e. outside a ball of radius $R$, that we can obtain uniform exponential bounds for the tail behavior of the mean field solution.
\end{remark}

\begin{remark}
We note that for convex potentials $K_V,$ $K_W\ge 0$ the uniform propagation of chaos with $\theta=1/2$ has  already been shown in \cite{malrieu2003convergence}. The main difference with this work is that our approach utilizes the convexity of the entropy along the 2-Wasserstein distance to obtain a contraction estimate. This approach can be easily extended to manifolds with Ricci curvature bounded from below \cite{lott2009ricci}, where we need to consider the sign of
$$
K_{\mathrm{Ric}}+K_V+K_W(1-1/N)
$$
with $K_{\mathrm{Ric}}$ the lower bound on the Ricci curvature of the underlying Riemannian manifold.
\end{remark}

\begin{remark}
We do not expect $\theta$ in the above theorem to be sharp for $K_V+K_W(1-1/N)<0$. At sufficiently high temperatures and $K_V+K_W(1-1/N)<0$, a comparable result for the 1-Wasserstein distance is shown by coupling methods in \cite{Ebe16} with $\theta=1/2$. 
\end{remark}

We can also use the non-degeneracy of the LSI constant to identify the fluctuations at equilibrium.
\begin{theorem}\label{thm:fluctuations}
Let $\Omega=\T^d$ and assume that $\liminf_{N\to\infty}\lambda^N_{\LSI}>0$, then the fluctuations process
\begin{equation}
    \eta^N(t)=\sqrt{N}\left(\frac{1}{N}\sum_{i=1}^N\delta_{X_t^i}-\rho_\beta\right) \, ,
\end{equation}
where $(X_t^1,...,X_t^N)$ are solutions to~\eqref{eq:particle system} with initial law given by the invariant Gibbs measure $M_N$, satisfies
\begin{equation}
    \sup_{N \in \N, t\in [0,T]}\mathbb{E} \pra*{\|\eta^N(t)\|_{H^{-s}(\T^d)}^2}<\infty,
\end{equation}
for any $T>0,\,s>d/2+1$.

Moreover, assume that $V$, $W$ are smooth and that the linearised operator \eqref{McKean} around $\rho_\beta$
\begin{align}
\mathcal{L}_{\rho_\beta} \eta = \beta^{-1} \Delta \eta + \nabla\cdot (\rho_\beta \nabla W \star \eta) + \nabla\cdot (\eta \nabla W \star \rho_\beta) + \nabla \cdot (\nabla V \eta) \, ,
\end{align}
satisfies, for all $\phi \in \C^\infty(\T^d)$, the following coercivity inequality
\begin{equation}
\langle - \mathcal{L}_{\rho_\beta} \phi,\phi\rangle_{L^2(\T^d)} \ge c \|\nabla \phi\|^2_{L^2(\T^d)}
\label{eq:coercivity}
\end{equation}
for some $c>0$. 

Then, for any $m>d/2+3$, $\eta^N$ converges in law, as a $C([0,T];H^{-m}(\T^d))$-valued random variable, to the unique stationary solution $\eta^\infty$ of the following linear SPDE
\begin{equation}\label{SPDE}
    \partial_t\eta^\infty=\mathcal{L}_{\rho_\beta}\eta^\infty+\nabla\cdot(\sqrt{\rho_\beta}\xi)
\end{equation}
where $\xi$ is space-time white noise. 
\end{theorem}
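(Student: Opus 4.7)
The plan is to combine a martingale problem approach with the uniform moment bounds that the LSI assumption forces on the equilibrium measure $M_N$. I would organise the proof into three stages plus a short word on the main obstacle.

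\emph{Step 1: Uniform $H^{-s}$ bound.} Since the particles start from the invariant measure $M_N$, the joint law remains $M_N$ for all $t \ge 0$, so it suffices to bound $\E_{M_N}[\|\eta^N\|_{H^{-s}}^2]$ uniformly in $N$. Expanding in the Fourier basis $\{e_k\}_{k \in \Z^d}$ of $L^2(\T^d)$, I would split each Fourier coefficient
$$\eta^N(e_k) = \sqrt{N}\bigl(\mu^N(e_k) - \E_{M_N}\mu^N(e_k)\bigr) + \sqrt{N}\bigl(\E_{M_N}\mu^N(e_k) - \rho_\beta(e_k)\bigr)$$
into a centered piece and a bias. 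The variance of the centered piece is controlled by the Poincar\'e inequality implied by the LSI, applied to the linear statistic $\sum_i e_k(X^i)$, yielding a bound of order $|k|^2/\lambda^N_{\LSI}$ that is uniform in $N$. The bias is controlled by \cref{thm:uniquelimit}, which gives $\overline d_2(\rho_\beta^{\otimes N}, M_N) \le C/\sqrt N$; this projects onto first marginals to yield $d_2(\rho_\beta, M_N^{(1)}) \le C/\sqrt{N}$ and hence $|\E_{M_N}\mu^N(e_k) - \rho_\beta(e_k)| \le C|k|/\sqrt N$. Summing over $k$ with weights $(1+|k|^2)^{-s}$ gives the claimed moment bound for any $s > d/2 + 1$.

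\emph{Step 2: Martingale formulation and linearisation.} Applying It\^o's formula to $\mu^N_t(\phi)$ for $\phi \in C^\infty(\T^d)$ yields
$$d\mu^N_t(\phi) = \langle \mu^N_t,\, \beta^{-1}\Delta\phi - \nabla V\cdot\nabla\phi - (\nabla_1 W\star\mu^N_t)\cdot\nabla\phi\rangle\,dt + dM^{N,\phi}_t,$$
with $[M^{N,\phi}]_t = \frac{2\beta^{-1}}{N}\int_0^t \langle \mu^N_s, |\nabla\phi|^2\rangle\,ds$. Writing $\mu^N = \rho_\beta + N^{-1/2}\eta^N$ and using that $\rho_\beta$ is a steady state of~\eqref{McKean} gives
$$d\eta^N_t(\phi) = \langle \eta^N_t, \mathcal{L}_{\rho_\beta}^*\phi\rangle\,dt + \frac{1}{\sqrt N}\langle \eta^N_t, (\nabla_1 W\star \eta^N_t)\cdot\nabla\phi\rangle\,dt + \sqrt N\,dM^{N,\phi}_t,$$
where $\mathcal L_{\rho_\beta}^*$ is the $L^2$-adjoint of the operator $\mathcal L_{\rho_\beta}$ defined in the theorem. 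Step 1 shows that the nonlinear remainder is of order $N^{-1/2}\|\eta^N\|_{H^{-s}}^2$ (using the smoothness of $W$ and $\phi$) and hence vanishes in $L^1$; the rescaled quadratic variation $[\sqrt N M^{N,\phi}]_t = 2\beta^{-1}\int_0^t \langle\mu^N_s,|\nabla\phi|^2\rangle\,ds$ converges to $2\beta^{-1}\int_0^t \int \rho_\beta|\nabla\phi|^2\,dx\,ds$ by the Wasserstein estimate of~\cref{thm:uniquelimit}.

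\emph{Step 3: Tightness and identification of the limit.} By Mitoma's criterion, tightness of $\{\eta^N\}$ in $C([0,T];H^{-m}(\T^d))$ for $m > d/2 + 3$ reduces to tightness of $\{\eta^N(\phi)\}$ in $C([0,T];\R)$ for each $\phi \in C^\infty(\T^d)$, which follows from Aldous's criterion together with the decomposition above and the uniform moment bounds. Any subsequential limit $\eta^\infty$ then satisfies the martingale problem: $\eta^\infty_t(\phi) - \eta^\infty_0(\phi) - \int_0^t \eta^\infty_s(\mathcal L_{\rho_\beta}^*\phi)\,ds$ is a continuous martingale with quadratic variation $2\beta^{-1}\int_0^t\int\rho_\beta|\nabla\phi|^2\,dx\,ds$, which uniquely characterises $\eta^\infty$ as a solution of~\eqref{SPDE}. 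The coercivity hypothesis~\eqref{eq:coercivity} ensures well-posedness and a unique invariant measure for the limit SPDE, so the subsequential limit is unique and the full sequence converges to the stationary solution.

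\emph{Main obstacle.} The hardest part is the uniform $H^{-s}$ bound, which crucially requires combining both the LSI (for variance control via Poincar\'e) and the quantitative $O(1/\sqrt N)$ convergence of $M_N$ to $\rho_\beta^{\otimes N}$ from \cref{thm:uniquelimit} (for bias control)---neither alone suffices. A secondary technical hurdle is controlling the nonlinear remainder uniformly in time, which relies on interpreting $(\nabla_1 W \star \eta^N)\cdot\nabla\phi$ as a smooth test function paired against $\eta^N$ and invoking the Step~1 bound.
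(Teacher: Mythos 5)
Your proposal is correct and follows the same overall architecture as the paper (uniform $H^{-s}$ moment bound at stationarity, It\^o/martingale decomposition with a remainder of order $N^{-1/2}$, tightness, identification via the martingale problem, and uniqueness of the limit through stationarity plus weak uniqueness of \eqref{SPDE}), but two ingredients are handled by genuinely different means. For the key uniform bound, the paper proves a law of large numbers by coupling the empirical measure of $M_N$ with the empirical measure $\rho_\beta^{(N)}$ of i.i.d.\ samples from $\rho_\beta$, using the isometry of \cref{thm:HM} together with \cref{lem:rightscaling} to control $\overline{d}_2(M_N,\rho_\beta^{\otimes N})\le C/\sqrt{N}$, and then a direct i.i.d.\ Hilbert--Schmidt computation for $\rho_\beta^{(N)}-\rho_\beta$; you instead split each Fourier mode into a centred part, whose variance you bound by the Poincar\'e inequality inherited from the uniform LSI applied to the linear statistic $N^{-1/2}\sum_i e_k(X^i)$, and a bias, which you control by projecting the same Wasserstein estimate from \cref{thm:uniquelimit} onto the first marginal. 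Both routes use the non-degenerate LSI in an essential way and give the same range $s>d/2+1$; yours avoids the auxiliary i.i.d.\ empirical measure and makes the role of the spectral gap for variance control explicit, while the paper's coupling argument avoids invoking the Poincar\'e inequality for $N$-particle observables and reuses machinery (the de Finetti isometry, \cref{lem:fisher}) already established elsewhere. For tightness you appeal to Mitoma's and Aldous's criteria, whereas the paper decomposes $\eta^N$ into a finite-variation part and a martingale part and uses fractional Sobolev-in-time embeddings \`a la Flandoli--Gatarek after a Skorokhod representation; both are standard, though with your route you should make explicit that the scalar tightness is upgraded to $C([0,T];H^{-m}(\T^d))$ via the uniform $H^{-s}$ bound and the compact embedding $H^{-s}\hookrightarrow H^{-m}$, $m>s$, which is where the margin $m>d/2+3$ is consumed. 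Two small points of care: the limiting quadratic variation you compute carries the factor $2\beta^{-1}$ coming from the noise $\sqrt{2\beta^{-1}}\,dB^i_t$, so the noise in \eqref{SPDE} should be normalised accordingly (the paper is itself loose about this constant); and in the final step one should note explicitly that stationarity passes to the limit, so that the unique invariant measure given by coercivity pins down the initial law, and then weak (in law) uniqueness of the mild solution yields convergence of the full sequence --- which is exactly how the paper concludes.
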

\begin{remark}
We mention the result of Fernandez and Meleard \cite{fernandez1997hilbertian} (see also~\cite{TanakaHitsuda1981,ShigaTanaka1985}) which characterises the fluctuations of the particle dynamics with respect to the mean field limit for finite time horizons, under a stronger closeness assumption for the initial data. We also mention the recent preprint~\cite{wang2021gaussian} which studies fluctuations for singular potentials.

To our knowledge, the first available results for fluctuations at equilibrium is due to Dawson \cite{dawson1983critical} in which he shows that, for the specific case of Desai--Zwanzig model, at the phase transition temperature, equilibrium fluctuations are non-Gaussian. It is an interesting open problem if this behavior is universal for any system which undergoes a (continuous) phase transition. The different notions of phase transition will be discussed in~\cref{sec:phasetransition}.
\end{remark}

\begin{remark}
For The smoothness assumptions on $V$ and $W$ are used to have a well-defined semigroup associated to the linearised operator $\mathcal{L}_{\rho_\beta}$ which regularizes instantaneously arbitrary initial data in $H^{-m}(\T^d)$. Up to technical results, this can be quantified by requiring $V$ and $W\in W^{m+\epsilon,\infty}(\T^d)$ for any $\epsilon>0$. For fluctuation results with singular potentials we refer the reader to the recent preprint \cite{wang2021gaussian}.
\end{remark}

The unique invariant measure of the SPDE \eqref{SPDE} $\mathcal{G} \in \cP(H^{-m}(\T^d))$ is a centred Gaussian measure with covariance operator $Q_{\mathcal{G}}$ given by
\begin{equation}
    Q_{\mathcal{G}}(\varphi,\psi):=\lim_{t \to \infty }\int_0^t \int_{\T^d} \nabla e^{t\cL_{\rho_\beta}^*}\varphi \cdot \nabla e^{t\cL_{\rho_\beta}^*}\psi\, \rho_\beta\,\dx{x}  \dx{t} \, ,
\end{equation}
for any mean-zero $\varphi,\;\psi \in C^\infty(\T^d)$ and where $\cL_{\rho_\beta}^*$ denotes the flat $L^2$-adjoint.

In the specific case that $V \equiv 0$ and $W (x,y)=W(x-y)$, we can obtain a more explicit characterisation of $\mathcal{G}$. Indeed, since $\liminf_{N \to \infty} \lambda_{\LSI}^N >0$, we know from~\cref{prop:high_temp} (and the discussion following it) that $E^{MF}$ has a unique critical point which is given by $\rho_\infty(\dx{x})=\dx{x}$. We can then write down an explicit representation of the action of the semigroup $e^{t \mathcal{L}^*_{\dx{x}}}$ in Fourier space as follows
\begin{equation}
\hat{\varphi}_t(k) = e^{-4 \pi^2 \abs{k}^2(\beta^{-1}+ \hat{W}(k)) t} \hat{\varphi}(k) \quad k \in \Z^d, k \neq 0 \, ,
\end{equation}
where $\varphi_t = e^{t \mathcal{L}^*_{\dx{x}}} \varphi$ for some mean-zero $\varphi \in C^\infty(\T^d)$. This leaves us with the formula
\begin{equation}
Q_{\mathcal{G}}(\varphi,\psi) = \sum_{k \in \Z^d, k \neq 0}\frac{\hat{\varphi}(k) \hat{\psi}(k)}{8 \pi^2 (\beta^{-1} + \hat{W}(k))} \, ,
\end{equation}
where we have used the fact that the coercivity inequality~\eqref{eq:coercivity} is equivalent to the fact that $\beta^{-1} + \hat{W}(k) >0$ for all $k \in \Z^d, k \neq 0$, which is also equivalent to the condition $\beta<\beta_\sharp$, see \cref{prop1}.  

Another way of rewriting this is that $\mathcal{G}$ is the unique centred Gaussian measure with Cameron--Martin space given by the closure of all smooth mean-zero functions $\varphi$ under the norm
\begin{equation}
\norm{\varphi}_{\mathcal{H}_{\mathcal{G}}}^2 =  8 \pi^2 \bra*{\beta^{-1}\int_{\T^d}\varphi^2 \dx{x} + \int_{\T^d} (W* \varphi)\varphi \dx{x} }\, .
\end{equation}
Since, $\beta^{-1}+ \hat{W}(k)>0$ for all $k \in \Z^d, k \neq 0$, the above norm is equivalent to the standard $L^2(\T^d)$ norm. The above norm is also the same, up to a multiplicative constant, as the norm introduced in~\cref{cA}.

One can further use the structure of the covariance operator $Q_{\mathcal{G}}$ to read off that $\mathcal{G}$ is supported on $H^{-\frac{d}{2}-}(\T^d)$ distributions. Thus, the limiting equilibrium fluctuations have the regularity of spatial white noise, which is not surprising considering the fact that their Cameron--Martin space is ``\emph{basically}'' $L^2(\T^d)$.

\subsection{Non-degeneracy of the LSI constant in specific cases} Putting aside for the time being the validity of \cref{conjecture}, we show that the LSI constant $\lambda_{\LSI}^N$ does not degenerate in the high temperature regime when $\Omega$ is compact, or when the confinement $V$ satisfies an LSI inequality and the interaction strength is small enough.

\begin{theorem}
Assume that there exists a constant $C>0$ such that
    \begin{align}
     \norm{W}_{\Leb^\infty(\Omega^2)},\, \norm{D^2_{x y} W}_{\Leb^\infty(\Omega^2)} <C \,. 
    \end{align}
  We then have the following two scenarios:
  \begin{tenumerate}
  	\item \underline{Compact case}: Assume $\Omega$ is compact and its normalised Lebesgue measure $\dx{x}$ satisfies a log Sobolev inequality. Then, there exists a $0<\beta_{\LSI}=\beta_{\LSI}(C)$ such that for all $\beta<\beta_{\LSI}$, we have
  	\begin{equation}
  	\liminf_{N\to \infty} \lambda^N_{\LSI}>0 \, .
  	\end{equation} \label{twoscalea}
  	\item \underline{weak interaction case}: Assume $\Omega=\R^d$ and that the one-particle measure $Z_V^{-1}e^{-V} \dx{x}$ satisfies a log Sobolev inequality with constant $\lambda^V_{\LSI}>0$. Then there exists an $\epsilon_{\LSI}=\epsilon_{\LSI}(C, \lambda^V_{\LSI},\beta)>0$, such that for any $0\le \epsilon< \epsilon_{\LSI}$ we have
  	    \begin{equation}
  	        \liminf_{N\to \infty} \lambda^{\epsilon, N}_{\LSI}>0 \, ,
  	    \end{equation}
  	  where $\lambda^{\epsilon, N}_{\LSI}$ is the log Sobolev constant of the Gibbs measure $\tilde{M}_N= Z_N^{-1}e^{-\beta H^\epsilon_N} \dx{x}$ with
  	  \begin{equation}
      H^\epsilon_N(x)=\sum_{i=1}^NV(x_i)+\frac{\epsilon}{2N}\sum_{j=1}^N\sum_{i=1}^N W(x_i,x_j) \, .
  	\end{equation}\label{twoscaleb}
  \end{tenumerate} 
\label{thm:twoscale}
\end{theorem}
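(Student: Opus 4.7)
\emph{Strategy.} The plan is to apply the Otto--Reznikoff criterion~\cite{OR07} for LSI of Gibbs measures on product spaces, combined with the Holley--Stroock perturbation lemma on the single-site conditional distributions. The mean-field scaling $\tfrac{1}{N}\sum_{j}W(x_i,x_j)$ is precisely what makes both inputs uniform in $N$: the bound $\|W\|_{L^\infty}\le C$ keeps each single-site conditional Hamiltonian a bounded perturbation of a reference one-particle measure, with oscillation independent of $N$; and the bound $\|D^2_{xy}W\|_{L^\infty}\le C$ forces each off-diagonal Hessian entry $\nabla^2_{x_ix_j}(\beta H_N)$ to be of size $O(1/N)$, so that the row sums of the ``Dobrushin matrix'' appearing in the Otto--Reznikoff condition stay bounded as $N\to\infty$.

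\emph{Conditional LSI.} For fixed $i$ and $x_{-i}=(x_j)_{j\neq i}$, the conditional density of $x_i$ under $M_N$ (resp.\ $\widetilde M_N$) is proportional to $\exp\bigl(-\beta V(x_i)-\tfrac{\beta}{N}\sum_{j\neq i}W(x_i,x_j)\bigr)$ (resp.\ with $\beta\epsilon$ in place of $\beta$ in the interaction term). In case \ref{twoscalea} I use normalised Lebesgue measure on $\Omega$ as the reference (which carries LSI with constant $\lambda_0>0$ by hypothesis) and absorb both $\beta V$ and the interaction as a bounded perturbation of oscillation at most $\beta(\|V\|_{L^\infty}+C)$; Holley--Stroock then produces a conditional LSI constant $\rho=\rho(\beta,C,\|V\|_{L^\infty},\lambda_0)>0$, independent of $N$, $i$, $x_{-i}$. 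In case \ref{twoscaleb} I use $Z_V^{-1}e^{-\beta V}\dx x$ as the reference, whose LSI constant is controlled by $\lambda^V_{\LSI}$ and $\beta$ in an $N$-independent manner (by standard transfer arguments, using the $K_V$-convexity from \cref{A1} for Bakry--Emery if $\beta K_V>0$, or a bounded-perturbation argument on sublevel sets otherwise), and absorb only the interaction as the perturbation, whose oscillation is at most $\beta\epsilon C$. This yields $\rho=\rho(\beta,\epsilon,C,\lambda^V_{\LSI})>0$, uniformly in $N$, $i$, $x_{-i}$.

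\emph{Off-diagonal Hessian and conclusion.} For $i\neq j$ the mixed Hessian equals $\tfrac{\beta}{N}D^2_{xy}W(x_i,x_j)$ (resp.\ $\tfrac{\beta\epsilon}{N}D^2_{xy}W$), whose operator norm is at most $\beta C/N$ (resp.\ $\beta\epsilon C/N$). The Otto--Reznikoff matrix is therefore $A=\rho I_N-\tfrac{\beta C}{N}\bigl(\mathbf{1}\mathbf{1}^{\top}-I_N\bigr)$ (resp.\ with $\beta\epsilon$), and a direct eigenvalue calculation gives $\lambda_{\min}(A)=\rho-\beta C\bigl(1-\tfrac{1}{N}\bigr)$ (resp.\ $\rho-\beta\epsilon C\bigl(1-\tfrac{1}{N}\bigr)$). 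The Otto--Reznikoff theorem then yields $\lambda^N_{\LSI}\ge \rho-\beta C(1-1/N)$ in case \ref{twoscalea} and $\lambda^{\epsilon,N}_{\LSI}\ge \rho-\beta\epsilon C(1-1/N)$ in case \ref{twoscaleb}. Choosing $\beta_{\LSI}$ (resp.\ $\epsilon_{\LSI}$) small enough that the right-hand side is strictly positive for every $N$ delivers the required uniform-in-$N$ lower bound, hence $\liminf_{N\to\infty}\lambda^N_{\LSI}>0$ (resp.\ $\liminf_{N\to\infty}\lambda^{\epsilon,N}_{\LSI}>0$).

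\emph{Main obstacle.} The delicate point throughout is the $N$-uniformity of the Holley--Stroock constant $\rho$, which crucially uses the mean-field $1/N$ scaling of the interaction: this is what keeps $\tfrac{\beta}{N}\sum_{j\neq i}W(x_i,x_j)$ bounded in $L^\infty$ uniformly in $N$; without it, the oscillation would be of order $N$ and $\rho$ would collapse exponentially, breaking the whole scheme. A secondary subtlety in case \ref{twoscaleb} is transferring LSI from $e^{-V}/Z_V$ to $e^{-\beta V}/Z_{\beta V}$ (the effective reference measure), but since $\beta$ is a fixed parameter this only costs a constant depending on $\beta$, consistent with the stated dependence $\epsilon_{\LSI}=\epsilon_{\LSI}(C,\lambda^V_{\LSI},\beta)$.
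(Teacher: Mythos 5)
Your proposal is correct and follows essentially the same route as the paper: the Otto--Reznikoff two-scale criterion applied to the single-site conditional measures of the Gibbs measure, whose uniform-in-$N$ log Sobolev constant is obtained by Holley--Stroock perturbation (around the normalised Lebesgue measure in the compact case, around the one-particle measure in the weak-interaction case), with the $O(1/N)$ mixed Hessians controlled via $\|D^2_{xy}W\|_{L^\infty(\Omega^2)}$. The only cosmetic difference is that you compute the smallest eigenvalue of the resulting matrix directly, whereas the paper checks diagonal dominance; for this rank-one-perturbed identity matrix the two give the identical bound $\rho-\beta C(1-1/N)$.
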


\noindent The above result relies crucially on the two-scale approach to log Sobolev inequalities introduced in~\cite{OR07}, which is based on the analysis of the marginal and conditional measures of $M_N$. For the convenience of the reader, we describe the main result of~\cite{OR07} in~\cref{thm:or}. In addition to the above high temperature result, it is also possible to obtain a sharper result in certain specific scenarios. For instance, in the case in which $\Omega=\T$, $W(x,y)=-\cos(2 \pi (x-y))$, and $V \equiv 0$. The corresponding system is referred to by many names: the noisy Kuramoto model, the mean field classical $XY$ model, the mean field $O(2)$ model or the Brownian mean field model~\cite{Chavanis2014}. It is known that this system exhibits a phase transition (of type A, B, and C, see~\cref{def:pt}). Due to the particularly simple nature of the model, it is possible to show that the $N$ particle log Sobolev constant $\lambda^N_{\LSI}$ is asymptotically non-degenerate all the way up to the critical inverse temperature $\beta_c=2$. We state without proof the following result due to Bauerschmidt and Bodineau~\cite{BB19}, see also~\cite{BeckerMenegaki2020}.
\begin{customthm}{D}[{\cite[Theorem 1]{BB19}}]
Consider the Gibbs measure $M_N$ of the mean field $O(2)$ model and denote by $\lambda_{\LSI}^N$ its log Sobolev constant. Then, for all $0 < \beta <\beta_c=2$, we have that
\begin{equation}
\liminf_{N\to \infty} \lambda^N_{\LSI}>0 \, .
\end{equation}
\label{thm:bb}
\end{customthm}
\begin{remark}
An essentially similar argument as in~\cite[Theorem 1]{BB19}, can be used to show that the system with $\Omega=\T$, $W(x,y)=-\cos(2 \pi k(x-y))$, and $V \equiv 0$ for some $k \in \N$ has a uniform LSI all the way up to the critical inverse temperature $\beta_c$ which coincides with $\beta_\sharp$ from \cref{prop1}.
\end{remark}

\section{Phase transitions}\label{sec:phasetransition}
We start our discussion by stating and proving the following result which provides us with a particularly useful characterisation of steady states.
\begin{proposition}\label{prop:steadystate}
Under \cref{A1,A2} the following statements are equivalent:
\begin{enumerate}
\item $\rho \in \cP(\Omega)$ is a critical point of the mean field free energy $E^{MF}$, that is to say 
$$
\abs{\partial E^{MF}}(\rho)=D(\rho)=0.
$$ 
\item $\rho \in \cP(\Omega)$ is a steady state of the mean field equation~\eqref{McKean}, i.e. it is distributional weak solution of the PDE
\[
\beta^{-1}\Delta \rho + \nabla \cdot(\rho (\nabla W \star \rho+\nabla V)) =0 \,  \quad x \in \Omega \, . 
\]
\item $\rho$ solves the self-consistency equation:
\begin{align}
\rho - \frac{1}{Z_\beta}e^{-\beta (W \star \rho +V) } =0 \, ,\qquad Z_\beta= \int_{\Omega} e^{-\beta (W \star \rho +V) } \dx{x} \, .
\label{selfconsistency}
\end{align}
\end{enumerate}
Furthermore, for all $\beta>0$, $E^{MF}$ has at least one global minimiser which is a critical point, and any critical point $\rho \in \cP(\Omega)$ of $E^{MF}$ is Lipschitz, strictly positive, and has moments of all orders.
\end{proposition}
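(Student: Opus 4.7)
The plan is to prove the cycle (3) $\Rightarrow$ (2) $\Rightarrow$ (3) and the equivalence (1) $\Leftrightarrow$ (3), then address existence of a minimiser and the regularity of critical points. The implication (3) $\Rightarrow$ (2) is a direct computation: differentiating the self-consistency equation gives $\nabla \rho = -\beta \rho(\nabla W \star \rho + \nabla V)$, which, substituted into the stationary mean field PDE, causes the two terms to cancel. For (2) $\Rightarrow$ (3), I would rewrite the stationary equation in the weighted divergence form
\begin{equation*}
\nabla \cdot\!\bigl(e^{-\beta(W \star \rho + V)}\, \nabla u\bigr) = 0, \qquad u := \rho\, e^{\beta(W \star \rho + V)}.
\end{equation*}
An elliptic bootstrap (using \Cref{A1,A2} to control the weight, the mollifying convolution term, and the decay coming from coercivity of $V$) shows that $\rho$, and hence $u$, is smooth and non-negative; a Liouville/maximum-principle argument applied to $u$ then forces it to be constant, which is exactly (3).

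For (1) $\Leftrightarrow$ (3) I would appeal to the identification $|\partial E^{MF}|(\rho)^2 = D(\rho)$, coming from the characterisation of the minimal subdifferential of each of the three pieces of $E^{MF}$; here \Cref{A1,A2} are tailored so that \cite[Theorem~10.4.11]{ambrosio2008gradient} applies, the doubling condition \eqref{doubling} being exactly the hypothesis needed for the interaction-energy part. Consequently $D(\rho)=0$ forces $\nabla(\beta^{-1}\log\rho + W\star\rho + V) = 0$ in $L^2(\rho)$, which integrates on each connected component of $\mathrm{supp}(\rho)$; combined with the bootstrap already used in (2) $\Rightarrow$ (3) to rule out holes in the support, this yields (3). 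Conversely, (3) immediately makes the integrand of $D(\rho)$ vanish pointwise.

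Existence of a global minimiser is obtained by the direct method on $(\cP(\Omega), d_2)$: $E^{MF}$ is bounded below by \Cref{Ebound}, lower semicontinuous for narrow convergence, and coercivity of $V$ renders minimising sequences tight. Any minimiser is a critical point by the general variational principle that minimisers have vanishing metric slope. The regularity claims for an arbitrary critical point $\rho$ are then read off the self-consistency representation $\rho = Z_\beta^{-1} e^{-\beta(W\star\rho + V)}$: strict positivity is immediate since the exponent is finite everywhere; the upper bound $\rho(x) \lesssim e^{-\beta V(x)/2}$, combined with $V(x) \ge |x|^\delta$ for large $|x|$, yields moments of all orders; and local Lipschitz continuity follows from the formula $\nabla \log \rho = -\beta(\nabla W \star \rho + \nabla V)$ together with the local Lipschitzianity of $V$ and $W$ guaranteed by their $K$-semi-convexity.

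The main obstacle I anticipate is the direction (1) $\Rightarrow$ (3), namely showing that $\mathrm{supp}(\rho) = \Omega$ so that the self-consistency relation holds globally rather than only on the support. A clean way around this is to first establish, from the pointwise identity on the support combined with the same elliptic rewriting used for (2) $\Rightarrow$ (3), that $\rho$ is continuous and strictly positive on the relative interior of its support; this makes $\mathrm{supp}(\rho)$ both open and closed in $\Omega$, and connectedness then promotes the local relation to the global self-consistency equation.
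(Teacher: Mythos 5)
Your proposal is essentially the same route the paper takes: the paper dispatches the equivalence of (1)--(3) by citing \cite[Proposition 2.4]{CGPS19}, whose argument is precisely the one you outline (the subdifferential identification $|\partial E^{MF}|^2=D$ via \cite[Theorem 10.4.11]{ambrosio2008gradient} for (1), and the divergence-form rewriting with elliptic regularity for (2)$\Leftrightarrow$(3)), and it obtains strict positivity, Lipschitz regularity and all moments exactly as you do, by reading them off the self-consistency representation together with $W$ bounded below and $V(x)\ge |x|^\delta$. Two small points of execution: on $\Omega=\R^d$ the Liouville step for $u=\rho e^{\beta(W\star\rho+V)}$ should be run as a weighted energy/cut-off argument exploiting $e^{-\beta(W\star\rho+V)}\in L^1(\R^d)$ (a bare maximum principle does not suffice on a non-compact domain with a degenerating weight), and your gradient formula as stated only yields local Lipschitz continuity, whereas the proposition (and the paper, without further comment) asserts Lipschitz continuity of critical points.
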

\begin{proof}
The proof of the equivalence of the three characterisations follows from similar arguments to~\cite[Proposition 2.4]{CGPS19} and so we omit the proof. The fact that $E^{MF}$ has a critical point follows from \cref{Ebound}.

Now, using~\eqref{selfconsistency}, we know that any critical point $\rho \in \cP(\Omega)$ is of the form
\begin{align}
\frac{1}{Z_\beta}e^{-\beta (W \star \rho + V)} \, ,
\end{align}
which is Lipschitz and strictly positive. We now use~\cref{A2} to assert that, since $W$ is bounded below, 
\begin{align}
\rho \leq Z_\beta^{-1} e^{\beta C} e^{-\beta V} \, .
\end{align}
Thus, by~\cref{A1}, any critical point $\rho \in \cP(\Omega)$ of $E^{MF}$ has moments of all orders. 
\end{proof}
Furthermore, we have the following result regarding uniqueness of critical points. 
\begin{proposition}\label{prop:high_temp}
Under the assumptions of~\cref{thm:twoscale} there exists a unique critical point $\rho_\beta\in \cP(\Omega)$ of the mean field free energy $E^{MF}$.
\end{proposition}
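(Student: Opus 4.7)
My plan is to leverage the smallness condition on $\beta$ (in scenario (a)) or on $\epsilon$ (in scenario (b)) to establish strict convexity of $E^{MF}$ along linear interpolations of probability measures, which then forces uniqueness of critical points. An essentially immediate alternative route is available: under the hypotheses of~\cref{thm:twoscale} one has $\liminf_{N\to\infty}\lambda_{\LSI}^N>0$, so~\cref{thm:uniquelimit} (which requires only the analogous $\limsup$ bound) applies and yields a unique steady state $\rho_\beta$; by~\cref{prop:steadystate} this is the same as a unique critical point. I will nevertheless sketch the direct convexity argument since it is self-contained and only uses the boundedness of $W$.

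Concretely, suppose $\rho_0,\rho_1\in\cP(\Omega)$ were two distinct critical points of $E^{MF}$. By~\cref{prop:steadystate} each is Lipschitz and strictly positive, so the linear interpolation $\rho_s:=(1-s)\rho_0+s\rho_1$ is strictly positive and absolutely continuous for all $s\in[0,1]$, and a direct computation using only $\rho_s\,\dx{x}\in\cP(\Omega)$ gives
\begin{equation*}
\frac{d^2}{ds^2}E^{MF}[\rho_s]=\beta^{-1}\int_\Omega\frac{(\rho_1-\rho_0)^2}{\rho_s}\dx{x}+\iint_{\Omega^2}W(x,y)(\rho_1-\rho_0)(x)(\rho_1-\rho_0)(y)\dx{x}\dx{y},
\end{equation*}
with the interaction term scaled by $\epsilon$ in scenario (b). Jensen's inequality yields $\int_\Omega(\rho_1-\rho_0)^2/\rho_s\,\dx{x}\ge\|\rho_1-\rho_0\|_{\Leb^1}^2$, while $\|W\|_{\Leb^\infty}\le C$ bounds the second term by $C\|\rho_1-\rho_0\|_{\Leb^1}^2$ in absolute value. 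Combined,
\begin{equation*}
\frac{d^2}{ds^2}E^{MF}[\rho_s]\ge(\beta^{-1}-C)\|\rho_1-\rho_0\|_{\Leb^1}^2
\end{equation*}
in scenario (a), with $\epsilon C$ replacing $C$ in scenario (b). Provided the threshold $\beta_{\LSI}$ (resp.\ $\epsilon_{\LSI}$) from~\cref{thm:twoscale} is further shrunk if necessary so that the right-hand side is strictly positive, $E^{MF}$ is strictly convex along the segment joining $\rho_0$ and $\rho_1$, contradicting the existence of two distinct critical points.

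The one minor obstacle is ensuring compatibility between the convexity threshold $\beta<1/C$ (resp.\ $\epsilon<1/(\beta C)$) and the LSI threshold coming from~\cref{thm:twoscale}. Since both amount to smallness of the same parameter, this is resolved by replacing $\beta_{\LSI}$ with $\min(\beta_{\LSI},1/C)$ and analogously for $\epsilon_{\LSI}$. No regularity of $V$ or $W$ beyond the boundedness assumption from~\cref{thm:twoscale} enters the argument.
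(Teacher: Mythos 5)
Your ``alternative route'' is in fact exactly the paper's proof: \cref{thm:twoscale} gives the uniform LSI, \cref{thm:uniquelimit} (which indeed only needs the $\limsup$ to be positive) yields a unique steady state, and \cref{prop:steadystate} identifies steady states with critical points of $E^{MF}$; so your proposal already contains the intended argument. Your primary sketch --- strict convexity of $E^{MF}$ along linear interpolations --- is a genuinely different and more elementary route, and the computation is correct: the second derivative consists of the entropy term plus the interaction term, Cauchy--Schwarz gives $\int_\Omega(\rho_1-\rho_0)^2/\rho_s\,\dx{x}\ge\norm{\rho_1-\rho_0}_{\Leb^1(\Omega)}^2$, the bound $\norm{W}_{\Leb^\infty(\Omega^2)}\le C$ controls the interaction, and since both endpoints satisfy the self-consistency equation of \cref{prop:steadystate} the flat first variation integrates to zero against $\rho_1-\rho_0$ (which has zero mass), so $g(s)=E^{MF}[\rho_s]$ would have $g'(0)=g'(1)=0$ with $g''>0$, a contradiction; it would be worth spelling out this last step, as well as the integrability of $V$ and $\log\rho_s$ against $\rho_1-\rho_0$ on $\R^d$, which follows from the Gaussian-type upper bound $\rho_i\le Z^{-1}e^{\beta C}e^{-\beta V}$. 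What the convexity route buys is independence from the LSI machinery and from any $N$-particle analysis. What it costs is the range of validity: the convexity threshold $\beta<1/C$ (resp.\ $\epsilon<1/(\beta C)$) is not comparable to $\beta_{\LSI}$ (resp.\ $\epsilon_{\LSI}$) from \cref{thm:twoscale}, since the latter comes from a diagonal-dominance condition involving $\lambda_{\LSI}^{\Omega}$ and $\norm{D^2_{xy}W}_{\Leb^\infty(\Omega^2)}$ and may well exceed $1/C$; so, as you concede, the direct argument alone only proves the proposition after shrinking the thresholds, i.e.\ a slightly weaker statement than the one fixed by the hypotheses of \cref{thm:twoscale}. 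For the proposition as stated you should therefore lead with the LSI route (the paper's proof), keeping the convexity argument as a self-contained supplement valid in the smaller high-temperature/weak-interaction regime.
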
 
\begin{proof}
The proof of this result follows by combining the results of~\cref{thm:uniquelimit,thm:twoscale}. We know from~\cref{thm:twoscale} that the logarithmic Sobolev inequality holds uniformly for $M_N$. We can then use~\cref{thm:uniquelimit} to argue that for $\beta$ sufficiently small, the mean field free energy $E^{MF}$ must have a unique critical point $\rho_\beta$.
\end{proof}

When we discuss non-uniqueness of critical points, we will often restrict ourselves to the case in which $V=0, W (x,y)=W(x-y)$, and $\Omega=\T^d$. This case lends itself particularly well to analysis as the Lebesgue measure $\rho_\infty(\dx{x})= \dx{x}$, what we shall hereafter refer to as the ``\emph{flat state}'', is always a critical point of $E^{MF}$ for all $\beta>0$.  We shall see later that in this setting it is possible to provide relatively clean examples of the different types of phase transitions that we consider in this paper. For an example of what a typical phase transition looks like, see~\cref{schematic} for a schematic of the free energy landscape in the vicinity of a phase transition.

We now introduce and motivate a list of properties that may serve as a proxy for the absence of well-defined order parameter. We discuss how these properties relate to each other, specifically in the ``\emph{flat case}''. We start by looking at the linearisation of the mean field dynamics~\eqref{McKean}:

\begin{property}
\label{cA}
Fix $\beta>0$. We say that the system~\eqref{McKean} satisfies \emph{Property A} if $E^{MF}$ has a unique critical point $\rho_\beta$ and the linearised operator associated to right hand side of~\eqref{McKean} has a spectral gap under the interaction weighted inner product. More specifically, we have that
\begin{align}
\inf_{\eta \in C^\infty_0(\Omega),\int_\Omega \eta \dx{x}=0} \frac{\skp{- \mathcal{L}_{\rho_\beta} \eta ,\eta }_{W,\rho_\beta}}{\norm{\eta}_{W,\rho_\beta}^2} >0 \, ,
\tag{A}
\end{align}
where 
\begin{align}
\mathcal{L}_{\rho_\beta} \eta = \beta^{-1} \Delta \eta +\nabla \cdot(\nabla V \eta)+ \nabla\cdot (\rho_\beta \nabla W \star \eta) + \nabla\cdot (\eta \nabla W \star \rho_\beta) \, , 
\end{align}
and 
\begin{align}
\skp{\eta,\nu}_{W,\rho_\beta}= \frac{\beta^{-1}}{2} \int_{\Omega} \eta \nu \rho_\beta^{-1} \dx{x} + \frac{1}{2}\int_{\Omega}(W\star \eta)\nu \dx{x}
\end{align}
with $\norm{\eta}_{W,\rho_\beta}^2= \skp{\eta,\eta}_{W,\rho_\beta}$.
\end{property}
The above property captures essentially the local stability of the critical point $\rho_\beta$. Loss of the local stability of $\rho_\beta$ is a precursor to a phase transition. In the setting of a continuous phase transition (see the upper half of~\cref{schematic}), one expects this property to fail exactly at the critical temperature $\beta=\beta_c$. The weighted inner product arises naturally through the linearisation of the mean field log Sobolev constant, or more specifically through the linearisation of the mean field energy $E^{MF}$, see \eqref{Eexpansion}. We note that the bilinear form $\skp{\cdot,\cdot}_{W,\rho_\beta}$ is positive semi-definite when $\rho_\beta$ is the unique critical point. For the classical $XY$ model (see the discussion before~\cref{thm:bb}), it is known that the system exhibits a continuous phase transition~\cite[Proposition 6.1]{CGPS19}. In this case, the non-local inner product above it is related to the inner product that was introduced in \cite{BGP10} to understand the spectral gap ahead of the phase transition. In the absence of the interaction term, it reduces to the standard weighted inner product that symmetrises the Fokker-Planck operator, see~\cite{Pavliotis2014}. In the periodic translation invariant case $\Omega=\T^d$, $W(x,y)=W(x-y)$, and $V \equiv 0$, when $\rho_\infty=\dx{x}$ is the unique critical point of $E^{MF}$, the weighted inner product $\skp{\cdot,\cdot}_{W,\rho_\infty}$ is equivalent to the standard $L^2$ inner product. Thus, in this situation, checking~\cref{cA} is satisfied is equivalent to checking that $\mathcal{L}_{\rho_\infty}$ has a spectral gap in the standard $L^2$ inner product. This relationship will be made clearer in~\cref{prop1}. Before discussing~\cref{cA} in more detail, we introduce the next property which measures the local degeneracy of the self-consistency equation~\eqref{selfconsistency}.

\begin{property}
\label{cB}
Fix $\beta>0$. We denote by $T:\Leb^2(\Omega)\to\Leb^2(\Omega)$ the map
associated to the self-consistency equation~\eqref{selfconsistency}:
\begin{align}
T (\rho)= \rho - \frac{1}{Z_\beta}e^{-\beta (W \star \rho +V) } =0 \, ,\qquad Z_\beta= \int_{\Omega} e^{-\beta (W \star \rho +V) } \dx{x} \, .
\tag{B}
\end{align}
 We say that the system~\eqref{McKean} satisfies \emph{Property B} if $E^{MF}$ has a unique critical point $\rho_\beta$ and the Fr\'echet derivative $D_{\rho}T$ of $T$ at $\rho_\beta$ is non-degenerate, that is to say it has a trivial kernel consisting only of constant functions.
\end{property}
Under additional conditions, see \cite{CGPS19}, the above property being violated implies the presence of a local bifurcation around the minimiser $\rho_\beta$. A simple condition that implies the presence of a local bifurcation is when the algebraic multiplicity of the $0$ eigenvalue of $D_\rho T$ at $\rho_\beta$ is odd.  Local bifurcations also arise, if the map $T$ can be rewritten as a so-called potential operator and its Frech\'et derivative $D_\rho T$ has a non-zero crossing number, see ~\cite{Kie88}.  The above property exactly captures the second-order degeneracy of the mean field free energy around the unique critical point $\rho_\beta$. Indeed, formally expanding $E^{MF}[\rho]$ about $\rho_\beta$ we obtain
\begin{align}
E^{MF}[\rho]=& E^{MF}[\rho_\beta] + \int_{\Omega} \beta^{-1}\frac{\eta^2}{2}\rho_\beta^{-1}+\frac{1}{2} W \star \eta \eta\dx{x} + O(\eta^3)\, \\
=&E^{MF}[\rho_\beta] + \beta^{-1}\int_{\Omega} \bra*{D_{\rho}T[\rho_\beta]\eta}\eta\rho_\beta^{-1}\dx{x} + O(\eta^3) \, ,
\label{Eexpansion}
\end{align}
with $\eta=\rho-\rho_\beta$. Thus, if~\cref{cB} is satisfied the mean field free energy is non-degenerate at second order near the critical point $\rho_\beta$. 

The third and final property considers the validity of the Dissipation Inequality.
\begin{property}
\label{cC}
Fix $\beta>0$. We say that the system~\eqref{McKean} satisfies \emph{Property C} if the infinite-volume log Sobolev constant is positive. More precisely, if we have that
\begin{align}
\lambda^\infty_{\LSI}:=\inf_{\rho\in\mathcal{P}(\Omega), \rho \not \in \cK} \frac{D(\rho)}{E^{MF}[\rho]- \min_{\cP(\Omega)}E^{MF}}>0  \, .
\tag{C}
\label{eqC}
\end{align}
\end{property}
The third and final property captures global aspects of the free energy landscape. Indeed, one would expect it to be violated in both the situations described in~\cref{schematic}. For the case of the discontinuous phase transition, the lower half of~\cref{schematic},~\cref{cC} would be violated because of the presence of a non-minimising critical point of $E^{MF}$ which is represented by the red circles. Thus, the numerator of~\eqref{eqC} vanishes while the denominator is strictly positive. As an explicit example of a system which exhibits such a phase transition, one can consider $\Omega=\T$, with the bi-chromatic interaction potential $W(x,y)=-\cos(2 \pi(x-y))-\cos(4 \pi(x-y))$, and $V \equiv 0$ (cf.~\cite[Theorem 5.11]{CGPS19}). On the other hand, in the case of a continuous phase transition, the upper half of~\cref{schematic}, the fact that~\cref{cC} should be violated is more subtle. To observe this, we linearise the right hand side of~\eqref{eqC} about the unique minimiser $\rho_\beta$ at $\beta=\beta_c$. For the dissipation, we have that
\begin{align} 
D(\rho)=& 2 \int_{\Omega}\abs*{\beta^{-1} \nabla \frac{\eta}{\rho_\beta} +\nabla W \star \eta}^2 \dx{\rho_\beta} + O(\eta^3) \, ,
\end{align}
with $\eta=\rho-\rho_\beta$. Combining the above expression with~\eqref{Eexpansion}, we obtain to leading order
\begin{align}
\frac{D(\rho)}{E^{MF}[\rho]-E^{MF}[\rho_\beta]} \approx \frac{4 \int_{\Omega}\abs*{\beta^{-1} \nabla \frac{\eta}{\rho_\beta} +\nabla W \star \eta}^2 \dx{\rho_\beta}}{\int_{\Omega}\beta^{-1}\eta^2 + W\star \eta \eta \rho_\beta \dx{\rho_\beta^{-1}}} \, .
\label{linlogsob}
\end{align}

Moreover, we notice that
\begin{align}
(-\mathcal{L}_{\rho_\beta}\eta,\eta)_{W,\rho_\beta}=
& \frac{1}{2}\int_{\Omega}\abs*{\beta^{-1}\nabla \frac{\eta}{\rho_\beta}  + \nabla W \star \eta }^2 \dx{\rho_\beta} \, .
\end{align}
Using~\eqref{linlogsob}, it follows that to leading order
\begin{align}
\frac{D(\rho)}{E^{MF}[\rho]-E^{MF}[\rho_\beta]} \approx4\frac{\skp{-\mathcal{L}_{\rho_\beta}\eta,\eta}^2_{W,\rho_\beta}}{\norm{\eta}_{W,\rho_\beta}^2} \, .
\end{align}
Hence, in the setting of a continuous phase transition the infinite volume log Sobolev constant captures the spectral gap of $\mathcal{L}_{\rho_\beta}$, and thus also captures the loss of local stability of $\rho_\beta$.

 We now present the following result which characterises how the various properties we have discussed relate to each other in the periodic spatially homogeneous case. 
\begin{proposition}\label{prop1}
Assume $\Omega =\T^d$, $V\equiv 0,$ $W(x,y)=W(x-y)$, and denote
$$
\beta_\sharp:=\frac{1}{ \min(0,-\min_{k \in \Z^d \setminus \set{0}}\hat{W}(k))}\in(0,\infty].
$$
Then, for $\beta<\beta_\sharp$ the linearised operator $\mathcal{L}_{\dx{x}}$ satisfies the spectral gap property \eqref{A}, and the kernel of the Fr\'{e}chet derivative of the self-consistency equation \eqref{B} is non degenerate at the ``\emph{flat  state}''. For $\beta\geq \beta_\sharp $, \cref{cA,cB} are violated.

Furthermore, for all $\beta \neq \beta_\sharp$,~\cref{cC} implies~\cref{cA} and \cref{cB}. 
\end{proposition}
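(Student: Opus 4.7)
The plan is to exploit translation invariance on $\T^d$ to diagonalise every operator in~\cref{cA,cB,cC} at the flat state $\dx{x}$ in the Fourier basis, reducing each property to mode-by-mode inequalities. With $V\equiv 0$, $W(x,y)=W(x-y)$ and $\rho_\infty\equiv 1$, the identity $\nabla W \star \dx{x} \equiv 0$ simplifies
\begin{equation*}
\mathcal{L}_{\dx{x}}\eta = \beta^{-1}\Delta\eta + \Delta(W\star\eta), \qquad D_\rho T(\dx{x})\eta = \eta + \beta\, W\star\eta
\end{equation*}
on mean-zero $\eta$, while the weighted inner product becomes $\skp{\eta,\nu}_{W,\dx{x}} = \tfrac{\beta^{-1}}{2}\int \eta\nu\,\dx{x} + \tfrac{1}{2}\int (W\star\eta)\nu\,\dx{x}$. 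Expanding $\eta = \sum_{k\neq 0}\hat\eta(k)\, e^{2\pi i k\cdot x}$ yields the diagonal actions
\begin{equation*}
\widehat{\mathcal{L}_{\dx{x}}\eta}(k) = -4\pi^2|k|^2\bigl(\beta^{-1}+\hat W(k)\bigr)\hat\eta(k), \qquad \widehat{D_\rho T(\dx{x})\eta}(k) = \bigl(1+\beta\hat W(k)\bigr)\hat\eta(k),
\end{equation*}
and $\skp{\eta,\nu}_{W,\dx{x}} = \tfrac{1}{2}\sum_{k\neq 0}(\beta^{-1}+\hat W(k))\hat\eta(k)\overline{\hat\nu(k)}$. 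All three properties are therefore governed by the sign of the common factor $\beta^{-1}+\hat W(k)$, which by the definition of $\beta_\sharp$ is strictly positive for every $k\neq 0$ exactly when $\beta<\beta_\sharp$.

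From this the first two statements fall out by inspection. For $\beta<\beta_\sharp$, the Rayleigh quotient of~\cref{cA} reduces on each Fourier mode to $4\pi^2|k|^2(\beta^{-1}+\hat W(k))$; since $\hat W(k)\to 0$ at infinity while only finitely many low modes can be close to the threshold $-\beta^{-1}$, the infimum over $k\neq 0$ is strictly positive. Simultaneously $D_\rho T(\dx{x})$ has spectrum $\set{1+\beta\hat W(k)}_{k\neq 0}$ avoiding $0$, so its kernel on mean-zero functions is trivial, yielding~\cref{cB}. For $\beta\geq \beta_\sharp$, by definition there exists $k_0\neq 0$ with $\beta^{-1}+\hat W(k_0)\leq 0$, producing at once a non-positive spectral direction (or an indefinite weighted inner product), and a degenerate mode of $D_\rho T(\dx{x})$; this violates both~\cref{cA} and~\cref{cB}.

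For the final implication, fix $\beta\neq\beta_\sharp$ and assume~\cref{cC}. Translation invariance makes $\dx{x}$ a critical point at every temperature, so $D(\dx{x}) = 0$; combined with~\eqref{eqC} this forces $E^{MF}[\dx{x}] = \inf E^{MF}$, i.e.\ the flat state is a global minimiser. A second-variation computation gives
\begin{equation*}
E^{MF}[\dx{x} + \epsilon\eta] - E^{MF}[\dx{x}] = \tfrac{\epsilon^2}{2}\sum_{k\neq 0}(\beta^{-1}+\hat W(k))|\hat\eta(k)|^2 + O(\epsilon^3),
\end{equation*}
so if $\beta>\beta_\sharp$ the choice of $\eta$ concentrated on the mode $k_0$ produces a strictly negative direction, contradicting global minimality. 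Hence~\cref{cC} is incompatible with $\beta>\beta_\sharp$ and the implication holds vacuously there; for $\beta<\beta_\sharp$, \cref{cA} and \cref{cB} already follow from the first part of the argument.

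The main obstacle is the ``unique critical point'' clause in the definitions of~\cref{cA,cB}: the Fourier calculation establishes the local spectral and non-degeneracy conditions at $\dx{x}$ but not the uniqueness of critical points of $E^{MF}$. Under~\cref{cC}, every critical point is a global minimiser, and translation symmetry would produce a continuous family of minimisers from any non-flat one; ruling these out, or else interpreting~\cref{cA,cB} as their local versions at the flat state in the present translation-invariant setting, is the delicate step, and in the regime $\beta<\beta_\sharp$ one expects an auxiliary uniqueness argument in the spirit of~\cref{prop:high_temp} to close the gap.
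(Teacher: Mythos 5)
Your Fourier diagonalisation of $\mathcal{L}_{\dx{x}}$, of $D_\rho T[\dx{x}]$, and of the weighted inner product is correct and is essentially the computation the paper performs (via the quadratic form rather than explicit symbols), so the first two assertions of \cref{prop1} — the spectral gap and kernel non-degeneracy at the flat state for $\beta<\beta_\sharp$, and the violation of \cref{cA,cB} for $\beta\geq\beta_\sharp$ — are established by your argument. Your handling of the case $\beta>\beta_\sharp$ in the last implication (a negative second-variation direction at $\dx{x}$ contradicts the global minimality forced by \cref{cC}, so the implication is vacuous there) is also fine and parallels the paper's appeal to the fact that $\dx{x}$ is then a non-minimising critical point.

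The genuine gap is the one you flag yourself but do not close: for $\beta<\beta_\sharp$, \cref{cA,cB} require that $E^{MF}$ have a \emph{unique} critical point, and your argument only yields the local spectral and kernel conditions at $\dx{x}$. Saying that \cref{cC} forces every critical point to be a global minimiser is correct, but it does not by itself exclude several minimising critical points, and invoking "an auxiliary uniqueness argument in the spirit of \cref{prop:high_temp}" cannot work: that proposition is a high-temperature/weak-interaction statement resting on the uniform LSI of \cref{thm:twoscale}, whereas $\beta_\sharp$ need not be small, so the regime $\beta<\beta_\sharp$ is not covered. The paper closes exactly this step with a global variational argument: for $\beta<\beta_\sharp$ the flat state is a \emph{strict} local minimum of $E^{MF}$ (\cite[Lemma 5.6]{GS20}), so if there were a second critical point, either it is non-minimising — in which case the numerator of \eqref{eqC} vanishes while the denominator is positive and \cref{cC} fails — or all critical points are minimisers, in which case the mountain pass theorem on $(\cP(\Omega),d_2)$ (\cite[Theorem 1.1]{GS20}) produces a non-minimising critical point and again contradicts \cref{cC}. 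Hence \cref{cC} implies uniqueness of the critical point for $\beta<\beta_\sharp$, and only then do your local computations upgrade to the full \cref{cA,cB}. Without this (or an equivalent) argument, the final implication of the proposition is not proved.
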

\begin{remark}
We emphasize that \cref{prop1} does not ensure that \cref{cA,cB} are satisfied for $\beta<\beta_\sharp$. In fact, for the bi-chromatic potential case $W(x,y)=-\cos(2 \pi(x-y))-\cos(4 \pi(x-y))$, we know that the ``\emph{flat  state}'' is not the unique steady state for some $\beta<\beta_\sharp$ violating the uniqueness requirement of \cref{cA,cB}, see~\cite[Theorem 5.11]{CGPS19}.
\end{remark}

\begin{proof}
We will first argue that~\cref{cA,cB} are equivalent.  It follows from the fact that $\rho_\infty(\dx{x}) \equiv \dx{x}$ is always a critical point of $E^{MF}$ (in the flat case) that if $E^{MF}$ has a unique critical point, it must be $\rho_\infty$.  For any mean-free $\eta \in C_0^\infty(\Omega)$, we have from the previous calculation, that
\begin{align}
\frac{\skp{-\mathcal{L}_{\rho_\infty} \eta,\eta}_{W,\rho_\beta} }{\norm{\eta}_{W,\rho_\beta}^2}=& \frac{1}{2}\frac{\int_{\Omega}\abs*{\beta^{-1}\nabla \eta  + \nabla W * \eta }^2\dx{x}}{ \int_{\Omega}(\beta^{-1}\eta + W * \eta )\eta \dx{x} } \, ,
\end{align}
where we have used the fact that $\rho_\beta$ is the Lebesgue measure on $\T^d$. It is easy to check that the above expression is strictly positive if and only if $\beta<\frac{1}{ \min(0,-\min_{k \in \Z^d \setminus \set{0}}\hat{W}(k))} =\beta_\sharp$. Similarly, computing the Fr\'echet derivative of $T$ at $\rho_\infty$, we obtain
\begin{align}
D_\rho T [\rho_\infty] \eta = \eta + \beta W * \eta  \, .
\end{align}
Again, the above linear operator has a trivial kernel if and only if $\beta < \beta_\sharp$. It follows that~\cref{cA,cB} are equivalent. 

We will now show that~\cref{cC} implies~\cref{cB} for $\beta \neq \beta_\sharp$. We know that~\cref{cB}  is satisfied if and only if $E^{MF}$ has unique critical point and $\beta<\beta_\sharp$. Assume it is violated. Then, either $E^{MF}$ has more than one critical point or $\beta \geq \beta_\sharp$ (or both). 

Consider the case in which $\beta<\beta_\sharp$ but $E^{MF}$ has more than one critical point one of which is $\rho_\infty$. Furthermore, we know from~\cite[Lemma 5.6]{GS20} that $\rho_\infty$ is a strict local minimum. If at least one of the  other critical points is not a minimiser then clearly~\cref{cC} is violated since the numerator can be chosen to be zero while the denominator remains positive. We will now argue that this is the case. Assume it is not, that is all other critical points are minimisers. Then, we can apply the mountain pass theorem in $\cP(\Omega)$  (using the fact that $\rho_\infty$ is a strict local minimum)~\cite[Theorem 1.1]{GS20} to construct a new non-minimising critical point thus obtaining a contradiction.

Consider now the case in which $\beta>\beta_\sharp$. In this situation, we note from~\cite[Proposition 5.3]{CGPS19} that $\rho_\infty$ is a non-minimising critical point of $E^{MF}$. Thus,~\cref{cC} is again violated.

\end{proof}
\begin{remark}
We note here that one would expect the~\cref{cA,cB,cC} to be equivalent to each other. However, we unable to show that this holds true in general.
\end{remark}
Given the above result, we are now finally in a position to present our definition of a phase transition in the ``\emph{flat  case}''.
\begin{definition}[Phase transition]
Assume $V\equiv 0, W(x,y)=W(x-y),$ and $\Omega =\T^d$. Then, we say that the system~\eqref{McKean} exhibits a phase transition of type A (resp. type B, type C) if there exists a $0<\beta_c<\infty$ such that for all $0<\beta<\beta_c$~\cref{cA} (resp. type B, type C) is satisfied, and for $\beta_c<\beta$~\cref{cA} (resp. type B, type C) is violated. 
\label{def:pt}
\end{definition}

Except for the Brownian mean field model see~\cref{thm:bb}, we cannot ensure that the existence of a phase transition in the sense of \cref{def:pt}. Combining \cref{thm:twoscale} with analysis in ~\cite{chayes2010mckean,CGPS19}, we can show the following result.

\begin{theorem}
Assume $V\equiv 0,\; W(x,y)=W(x-y),$ and $\Omega =\T^d$. If $W$ is $\mathbf{H}$-stable, that is to say $\hat{W}(k)\ge 0$ for all $k \in \Z^d \setminus \set{0}$, then the system~\eqref{McKean} satisfies \cref{cA,cB} for all $\beta\in(0,\infty)$.

If there exists $k \in \Z^d \setminus \set{0}$ such that $\hat{W}(k)<0$, then there exists $\beta_1<\beta_\sharp$ such that \cref{cA,cB,cC} are satisfied for all $\beta<\beta_1$, and \cref{cA,cB,cC} are violated for all $\beta>\beta_\sharp$. 
\end{theorem}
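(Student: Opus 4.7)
My plan is to split into the $\mathbf{H}$-stable case and the non-stable case, and in the latter to treat the high-$\beta$ and low-$\beta$ regimes separately. Almost everything we need is already packaged in \cref{prop1}, \cref{thm:twoscale}, and \cref{thm:degeneracy}; the only genuinely new ingredient will be a short convexity argument yielding uniqueness of the critical point in the $\mathbf{H}$-stable case.

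If $\hat{W}(k)\geq 0$ for all $k\neq 0$ then by definition $\beta_\sharp=\infty$, so \cref{prop1} applies for every $\beta>0$ and delivers both the spectral gap of $\mathcal{L}_{\rho_\infty}$ at the flat state and non-degeneracy of the Fr\'echet derivative of the self-consistency map there. To promote these to \cref{cA,cB} I still need that $\rho_\infty$ is the unique critical point of $E^{MF}$. For this I would prove that $E^{MF}$ is strictly convex along linear interpolations in $\cP(\T^d)$: the entropy is strictly convex by the standard argument, while Parseval's identity gives $\iint W(x-y)\,\dx{\nu(x)}\,\dx{\nu(y)}=\sum_{k\neq 0}\hat W(k)|\hat\nu(k)|^2\geq 0$ for every mean-zero signed measure $\nu$, so the interaction energy is convex. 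Strict convexity precludes more than one critical point, and by translation invariance $\rho_\infty$ is a critical point, hence the unique one.

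For the non-stable case, suppose $\hat W(k^*)<0$ for some $k^*\neq 0$, so that $\beta_\sharp\in(0,\infty)$. When $\beta>\beta_\sharp$, \cref{prop1} states directly that \cref{cA,cB} fail. Since $\beta\neq\beta_\sharp$, the contrapositive of the implication ``\cref{cC}$\Rightarrow$\cref{cA},\cref{cB}'' from \cref{prop1} then forces \cref{cC} to fail as well.

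It remains to produce $\beta_1<\beta_\sharp$ so that \cref{cA,cB,cC} hold on $(0,\beta_1)$. Here I would invoke part (\ref{twoscalea}) of \cref{thm:twoscale}: $\T^d$ is compact and its normalised Lebesgue measure satisfies a log-Sobolev inequality (e.g.\ by Bakry--Emery on the flat torus), so there exists $\beta_{\LSI}>0$ such that $\liminf_N\lambda_{\LSI}^N>0$ for every $\beta<\beta_{\LSI}$. Pick any $\beta_1\in(0,\min(\beta_{\LSI},\beta_\sharp))$; for $\beta<\beta_1$, \cref{prop:high_temp} yields a unique critical point of $E^{MF}$, which by translation invariance must equal $\rho_\infty$, and since $\beta<\beta_\sharp$, \cref{prop1} supplies \cref{cA,cB}. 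Finally \cref{thm:degeneracy} gives $\lambda_{\LSI}^\infty\geq\limsup_N\lambda_{\LSI}^N>0$, which is precisely \cref{cC}. The hard part of this plan is really just the convexity step and the correct chaining of \cref{thm:twoscale} with \cref{prop:high_temp} and \cref{thm:degeneracy}; the rest is bookkeeping.
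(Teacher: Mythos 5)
Your proposal is correct and follows essentially the same route as the paper: $\mathbf{H}$-stability gives (linear) convexity of $E^{MF}$ and hence uniqueness of the flat critical point (the paper simply cites \cite[Proposition 5.8]{CGPS19} where you supply the Parseval argument), the regime $\beta>\beta_\sharp$ is handled by \cref{prop1}, and the small-$\beta$ regime by combining \cref{thm:twoscale} with \cref{thm:degeneracy} and \cref{prop1}. The only cosmetic difference is that in the high-temperature step you obtain uniqueness via \cref{prop:high_temp} and then use the first part of \cref{prop1}, while the paper invokes the implication \cref{cC}$\Rightarrow$\cref{cA},\cref{cB} of \cref{prop1}; both chains rest on the same results.
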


\begin{proof}
If $W$ is $\mathbf{H}$-stable, it follows from linear convexity $\rho_\infty=\dx{x}$ is the unique critical point of $E^{MF}$ for all $0<\beta<\infty$, hence \cref{cA,cB} cannot be violated, see \cite[Proposition 5.8]{CGPS19}. On the other hand, if $\beta_\sharp <\infty$, we know from \cref{prop1} that \cref{cA,cB,cC} are violated for all $\beta>\beta_\sharp$.

By \cref{thm:twoscale} and \cref{thm:degeneracy}, we know that there exists $\beta_{\LSI}$ such that $0<\beta<\beta_{\LSI}$ then \cref{cC} is satisfied. Finally, by \cref{prop1} we have that \cref{cA,cB} are also satisfied. 
\end{proof}

\section{The limit $\mathcal{P}_{\mathrm{sym}}(\Omega^N)\to\mathcal{P}(\mathcal{P}(\Omega))$}\label{sec:DFHS}
In this section, we recall some useful results that allows us the characterise the various relevant objects in the limit as $N\to \infty$. We follow the approach taken by Hauray and Mischler in~\cite{hauray2014kac}. We start by defining the empirical measure.
\begin{definition}\label{empirical}
Given some $\rho^N\in \mathcal{P}_{\mathrm{sym}}(\Omega^N)$, the empirical measure is the $\cP(\Omega)$-valued random variable which is given by
$$
\mu^{(N)}:=\frac{1}{N}\sum_{i=1}^N\delta_{x_i}\qquad\mbox{where $(x_1,...,x_N)$ is distributed according $\rho^N$.}
$$
We denote the law of the empirical measure $\mu^{(N)}$ on $\cP(\Omega)$ by
$$
\hat{\rho}^N\in\mathcal{P}(\mathcal{P}(\Omega)) \, .
$$
\end{definition}
The topology we consider throughout most of the manuscript is the one induced by the scaled 2-Wasserstein distance.
\begin{definition}\label{def:wasser_dist}
Given $\rho_1^N,\,\rho_2^N\in \mathcal{P}_{\mathrm{sym}}(\Omega^N)$, the scaled 2-Wassertein distance between them is given by
$$
\overline{d}_2(\rho_1^N,\rho_2^N)=\inf_{\substack{X\sim\rho_1^N\\Y\sim\rho_2^N}}\left(\frac{1}{N}\mathbb{E}[|X-Y|^2]\right)^{1/2},
$$
where $X,\, Y$ are $\Omega^N$-valued random variables.

Similarly, given $P_1,\,P_2\in \mathcal{P}(\cP(\Omega))$, the 2-Wasserstein distance between them is given by
$$
\mathfrak{D}_2(P_1,P_2)=\inf_{\substack{\mathcal{X}\sim P_1\\\mathcal{Y}\sim P_2}}\left(\mathbb{E}[d_2^2(\mathcal{X},\mathcal{Y})]\right)^{1/2}
$$
where $\cX,\, \cY$ are $\cP(\Omega)$-valued random variables.
\end{definition}
A fundamental result we need to understand the convergence $\mathcal{P}_{\mathrm{sym}}(\Omega^N)\to\mathcal{P}(\mathcal{P}(\Omega))$ is the fact that the mapping $\rho^N\mapsto\hat{\rho}^N$ is an isometry for the appropriately scaled 2-Wasserstein distance defined in ~\cref{def:wasser_dist}. Indeed, we have the following result.
\begin{theorem}[{~\cite[Proposition 2.14]{hauray2014kac}}]\label{thm:HM}
For each $\mu^N,\,\nu^N\in \mathcal{P}_{\sym}(\Omega^N)$, we have 
$$
\overline{d}_2(\mu^N,\nu^N)=\mathfrak{D}_2(\hat{\mu}^N,\hat{\nu}^N) \, .
$$
\end{theorem}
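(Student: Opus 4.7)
The plan is to establish both inequalities separately. For the easy direction $\mathfrak{D}_2(\hat{\mu}^N,\hat{\nu}^N)\le \overline{d}_2(\mu^N,\nu^N)$, I would take any coupling $(X,Y)$ with $X\sim\mu^N$ and $Y\sim\nu^N$ and push it forward through the empirical measure map $(x_1,\ldots,x_N)\mapsto \frac{1}{N}\sum_{i=1}^N\delta_{x_i}$. This produces a coupling of $\hat{\mu}^N$ and $\hat{\nu}^N$ on $\mathcal{P}(\mathcal{P}(\Omega))$, and using the atom-to-atom transport plan $\frac{1}{N}\sum_i\delta_{(x_i,y_i)}$ between the two random empirical measures (whose marginals are manifestly the two empirical measures), one obtains
\[
d_2^2\bigl(\tfrac{1}{N}\sum_i\delta_{x_i},\tfrac{1}{N}\sum_i\delta_{y_i}\bigr)\le \frac{1}{N}\sum_{i=1}^N|x_i-y_i|^2 = \frac{1}{N}|X-Y|^2.
\]
Taking expectations and then the infimum over couplings yields the desired inequality.

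The reverse direction $\overline{d}_2(\mu^N,\nu^N)\le \mathfrak{D}_2(\hat{\mu}^N,\hat{\nu}^N)$ is more delicate and crucially exploits the symmetry of $\mu^N$ and $\nu^N$. I would start with a (near-)optimal coupling $\Pi$ of $\hat{\mu}^N$ and $\hat{\nu}^N$, so that samples $(\mathcal{X},\mathcal{Y})\sim \Pi$ are pairs of random $N$-atom empirical measures. For each realization $\mathcal{X}=\frac{1}{N}\sum_i\delta_{x_i}$ and $\mathcal{Y}=\frac{1}{N}\sum_i\delta_{y_i}$, Birkhoff's theorem on doubly stochastic matrices gives the identity
\[
d_2^2(\mathcal{X},\mathcal{Y})=\min_{\sigma\in S_N}\frac{1}{N}\sum_{i=1}^N|x_i-y_{\sigma(i)}|^2,
\]
so one can fix a minimizing permutation $\sigma^\star \in S_N$. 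The key step is to randomize the labellings: independently sample a uniformly distributed $\pi\in S_N$ and define $X=(x_{\pi(1)},\ldots,x_{\pi(N)})$ and $Y=(y_{\sigma^\star(\pi(1))},\ldots,y_{\sigma^\star(\pi(N))})$. By symmetry of $\mu^N$, its conditional law given the empirical measure is uniform over orderings, hence $X\sim \mu^N$; similarly $Y\sim \nu^N$, using that $\sigma^\star\circ \pi$ is uniform on $S_N$ by left-invariance of the counting measure together with the independence of $\pi$ from $\sigma^\star$. The resulting pairing satisfies $\frac{1}{N}|X-Y|^2 = d_2^2(\mathcal{X},\mathcal{Y})$ pathwise, and taking expectations closes the bound.

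The main technical obstacle is measurability: one must select $\sigma^\star$ in a jointly measurable way on the space of pairs of $N$-atom probability measures, handling both coinciding atoms and non-uniqueness of minimizers. I would handle this either through a measurable selection theorem (for instance choosing the lexicographically smallest minimizer among the finitely many permutations) or through an approximation argument in which atoms are first perturbed to be distinct, the construction carried out in that generic setting, and then passed to the limit via lower semicontinuity of the Wasserstein cost and tightness of the constructed couplings.
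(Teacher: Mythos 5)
Your proposal is correct and follows essentially the same route as the paper, which only sketches the argument: the inequality $\overline{d}_2\ge\mathfrak{D}_2$ via pushing couplings forward through the empirical-measure map, and the converse by inverting that map and exploiting symmetry, which is exactly your randomized-relabelling construction with the optimal permutation (Birkhoff) and a measurable selection. Your write-up simply supplies the details (uniform conditional law over orderings, uniformity of $\sigma^\star\circ\pi$, measurability of the selection) that the paper leaves to the cited reference.
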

\begin{proof} 
We present only a sketch of the proof of this result. The first inequality $\overline{d}_2(\mu^N,\nu^N)\ge \mathfrak{D}_2(\hat{\mu}^N,\hat{\nu}^N)$ follows from the mapping in ~\cref{empirical}  $X\mapsto\mathcal{X}$, i.e. given $X=(X_1,...,X_N)$ an $\Omega^N$-valued random variable with law $\mu^N$, we define
$$
\cX:=\frac{1}{N}\sum_{i=1}^N\delta_{X_i} \, ,
$$
is a $\mathcal{P}(\Omega)$-valued random variable with law $\hat{\mu}^N$. The converse inequality follows from taking the inverse mapping and exploiting the symmetry of $\mu^N$ and $\nu^N$.
\end{proof}

Using this result, we can provide a metric notion of the de Finetti/Hewitt--Savage convergence.
\begin{definition}\label{def:metricDFHS}
We say that the sequence $(\rho^N)_{N\in\N}$ such that $\rho^N\in\cP_{\mathrm{\sym}}(\Omega^N)$ converges in the 2-Wasserstein distance to $P_\infty\in \cP(\cP(\Omega))$ if 
$$
\lim_{N\to\infty} \mathfrak{D}_2(\hat{\rho}^N,P_\infty)=0,
$$
where $\hat{\rho}^N$ is the law of the empirical measure (as $\cP(\Omega)$-valued random variable) associated to $\rho^N$.
\end{definition}
We now have the following result which connects the above convergence to the one introduced in~\eqref{def:DFHS}.

\begin{proposition}\label{prop:compactness}
For any sequence $(\rho^N)_{N\in\N}$ with $\rho^N \in \cP_{\mathrm{sym}}(\Omega^N)$ and

\begin{equation}\label{hyp:moment}
\sup_{N \in \N}  \int_{\Omega}|x|^\gamma\;\dx \rho_1^N(x)<\infty    
\end{equation}
for some $\gamma>2$, the metric convergence in Definition \ref{def:metricDFHS} is equivalent to the original marginal convergence of \eqref{def:DFHS}.
\end{proposition}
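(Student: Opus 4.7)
My plan hinges on the bridge identity that for any $\varphi\in C_b(\Omega^l)$ and any symmetric $\rho^N\in\cP_{\sym}(\Omega^N)$ with $N\ge l$,
\begin{equation*}
\Bigl|\int_{\Omega^l}\varphi\,\dx\rho^N_l-\int_{\cP(\Omega)}\int_{\Omega^l}\varphi\,\dx\mu^{\otimes l}\,\dx\hat\rho^N(\mu)\Bigr|\le \frac{Cl^2}{N}\|\varphi\|_\infty,
\end{equation*}
which I would derive by expanding $(\mu^{(N)})^{\otimes l}(\varphi)=N^{-l}\sum_{i_1,\ldots,i_l}\varphi(X_{i_1},\ldots,X_{i_l})$, using the symmetry of $\rho^N$ to reduce the $N(N-1)\cdots(N-l+1)$ distinct-index terms to $\int\varphi\,\dx\rho^N_l$, and bounding the $O(l^2 N^{l-1})$ repeated-index terms by $\|\varphi\|_\infty$.

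For the direction \emph{metric $\Rightarrow$ marginal}, I would note that $\mathfrak{D}_2$-convergence implies weak convergence on $\cP(\cP(\Omega))$ with the weak topology on $\cP(\Omega)$; since $\mu\mapsto\int\varphi\,\dx\mu^{\otimes l}$ is bounded and weakly continuous for $\varphi\in C_c(\Omega^l)$, passing $N\to\infty$ in the bridge identity above yields \eqref{def:DFHS}.

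For the converse I would proceed in three steps. \emph{Tightness:} the uniform bound \eqref{hyp:moment} gives $\E_{\hat\rho^N}[\int|x|^\gamma\,\dx\mu]=\int|x|^\gamma\,\dx\rho^N_1\le C$, so Markov's inequality applied to the level sets $K_M=\{\mu:\int|x|^\gamma\,\dx\mu\le M\}$ yields $\hat\rho^N(K_M)\ge 1-C/M$ uniformly in $N$; because $\gamma>2$, each $K_M$ is $d_2$-relatively compact in $\cP(\Omega)$ (weakly tight and with uniformly integrable second moments, since $\int_{|x|>R}|x|^2\,\dx\mu\le M R^{2-\gamma}$), which gives tightness of $\{\hat\rho^N\}$ in $\cP(\cP(\Omega),d_2)$. \emph{Identification of the limit:} any $\mathfrak{D}_2$-weakly convergent subsequence $\hat\rho^{N_k}\rightharpoonup Q$ satisfies, by the bridge identity and the hypothesis \eqref{def:DFHS},
\begin{equation*}
\int_{\cP(\Omega)}\int_{\Omega^l}\varphi\,\dx\mu^{\otimes l}\,\dx Q(\mu)=\int_{\cP(\Omega)}\int_{\Omega^l}\varphi\,\dx\rho^{\otimes l}\,\dx P_\infty(\rho)
\end{equation*}
for every $\varphi\in C_c(\Omega^l)$ and every $l\in\N$; since polynomial functionals separate points in $\cP(\cP(\Omega))$ (de Finetti/Hewitt--Savage uniqueness), this forces $Q=P_\infty$, so the full sequence converges. \emph{Upgrading to $\mathfrak{D}_2$:} fixing $x_0\in\Omega$, Jensen's inequality (with exponent $\gamma/2\ge 1$) and exchangeability give
\begin{equation*}
\E_{\hat\rho^N}\bigl[d_2(\mu,\delta_{x_0})^\gamma\bigr]=\E_{\rho^N}\Bigl[\bigl(\tfrac{1}{N}\sum_{i=1}^N|X_i-x_0|^2\bigr)^{\gamma/2}\Bigr]\le \int|x-x_0|^\gamma\,\dx\rho^N_1\le C',
\end{equation*}
and since $\gamma/2>1$, de la Vall\'ee--Poussin supplies uniform integrability of $d_2^2(\cdot,\delta_{x_0})$ under $\hat\rho^N$, which combined with the weak convergence gives $\mathfrak{D}_2(\hat\rho^N,P_\infty)\to 0$.

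The main obstacle I anticipate is the third step, that of promoting abstract weak $\cP(\cP(\Omega))$-convergence to convergence in the metric $\mathfrak{D}_2$; this requires control of the relevant second moments at \emph{both} the inner and outer levels of the hierarchy, and is precisely what the strict inequality $\gamma>2$ in \eqref{hyp:moment} is tailored to buy, simultaneously furnishing $d_2$-compact level sets at the inner level and $L^{\gamma/2}$-control of $d_2^2(\cdot,\delta_{x_0})$ at the outer level.
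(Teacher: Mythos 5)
Your proposal is correct and follows essentially the same route as the paper's proof: both start from the Diaconis--Freedman bridge identity comparing $\rho^N_l$ with the $l$-fold product of the empirical measure under $\hat\rho^N$, both get one direction by noting that cylindrical functionals are a subset of $C_b(\cP(\Omega))$, and both handle the converse via tightness from \eqref{hyp:moment} and identification of the limit through cylindrical functionals (your ``polynomial functionals separate points'' is the same fact the paper invokes via Stone--Weierstrass density of cylinder functions in $C_b(\cP(\Omega))$). The one place you are actually more careful than the paper is the upgrading step from weak convergence in $\cP(\cP(\Omega))$ to $\mathfrak{D}_2$-convergence: you correctly apply Jensen (using $\gamma/2\ge 1$) to obtain $\E_{\hat\rho^N}[d_2^{\gamma}(\mu,\delta_{x_0})]\le\int|x-x_0|^\gamma\,\dx\rho_1^N$ and then de la Vall\'ee--Poussin for uniform integrability of $d_2^2$, whereas the paper compresses this into a single display asserting an equality $\int d_2^{2\gamma}(\rho,\delta_0)\,\dx\hat\rho^N=\int|x|^\gamma\,\dx\rho_1^N$ that is really only an inequality obtained by Jensen and has a stray factor of $2$ in the exponent; your version is the correct bookkeeping of what \eqref{hyp:moment} actually buys.
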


\begin{proof}
We first notice that by the results of Diaconis and Freedman~\cite{diaconis1980finite}, for a fixed $l\in\N$, the marginal $\rho^N_l$ coincides in the limit as $N \to \infty$ with the $l$\textsuperscript{th} product of the empirical measure. More specifically, for any $\varphi\in C^\infty_c(\Omega^l)$ we have that
\begin{equation}\label{eq:marginal}
\left|\int_{\Omega^l}\varphi \;\dx \rho^N_l-\int_{\cP(\Omega)}\left(\int_{\Omega^l}\varphi \;\dx \mu^{\otimes l}\right)\;\dx \hat{\rho}^N(\mu)\right|\le l^2\frac{\|\varphi\|_{L^\infty(\Omega^l)}}{N}  \, .
\end{equation}
   Furthermore,~\eqref{hyp:moment} implies that $\hat{\rho}^N$ is tight in $\cP(\Omega)$. Indeed, we have
 \begin{align}\label{2mom}
\int_{\cP(\Omega)}d_2^{2\gamma}(\rho,\delta_0)\;\dx \hat{\rho}^N(\rho)= \int_{\Omega}|x|^\gamma \; \dx \rho^N_1 \, .
\end{align} 
Now if $\hat{\rho}^N$ converges in the sense of~\cref{def:metricDFHS} to $P_\infty$ we know that it must converge when tested against every $C_b(\cP(\Omega))$. Since cylindrical test functions are a subset of $C_b(\Omega)$, it is clear from~\eqref{eq:marginal} that $\rho^N$ must also converge to $P_\infty$ in the sense of~\eqref{def:DFHS}.

On the other hand if $\rho^N$ converges to $P_\infty$ in the sense of~\eqref{def:DFHS} we know that, under~\eqref{hyp:moment}, the sequence $\hat{\rho}^N$ is relatively compact in $(\cP(\cP(\Omega)),\mathfrak{D}_2)$. By the Stone--Weierstrass theorem cylindrical functions are dense in $C_b(\cP(\Omega))$ (see \cite[Theorem 2.1 ]{rougerie2015finetti}), which tells us that the limits in~\eqref{eq:marginal} and ~\cref{def:metricDFHS} coincide.
% Thus, by the uniqueness of the limit, which follows from the density of cylindrical functions in $C_b(\mathcal{P}(\Omega))$, and~\eqref{eq:marginal}, $\hat{\rho}^N$ converges in the topology of $\cP(\cP(\Omega))$ and thus in the sense of~\cref{def:metricDFHS} to the same limit $P_\infty$. 
\end{proof}
Next, we discuss the limit of the associated free energy and dissipation functionals.
\begin{theorem}\label{thm:MS}
Under \cref{A1,A2}, consider the sequence $(\rho^N)_{N\in\N}$ with $\rho^N\in\mathcal{P}_{\mathrm{sym}}(\Omega^N)$ such that $\rho^N\to P_\infty\in \cP(\cP(\Omega))$ in the sense of \Cref{def:metricDFHS}. Then,
\begin{equation}\label{liminf}
    \liminf_{N\to\infty} E^N[\rho^N]\ge E^\infty[P_\infty]\qquad\mbox{and}\qquad\liminf_{N\to\infty} \overline{\mathcal{I}}(\rho^N|M_N)\ge \int_{\mathcal{P}(\Omega)} D(\rho)\;dP_\infty(\rho)
\end{equation}
where $E^N$, $E^\infty$, $\overline{\mathcal{I}}$, and $D(\rho)$ are defined by \eqref{EnergyN}, \eqref{Energyinfty},~\eqref{eq:dissipation}, and \eqref{dissipation}, respectively. Moreover, for any $P_\infty\in\cP(\cP(\Omega))$ the sequence
$$
\rho^N_*=\int_{\mathcal{P}(\Omega)}\rho^{\otimes N}\;\dx P_\infty(\rho)\in \mathcal{P}_{\mathrm{sym}}(\Omega^N)
$$
attains the limits the limits, i.e.
\begin{equation}\label{lim}
    \lim_{N\to\infty} E^N[\rho_*^N]= E^\infty[P_\infty]\qquad\mbox{and}\qquad \lim_{N\to\infty} \overline{\mathcal{I}}(\rho_*^N|M_N)=\int_{\mathcal{P}(\Omega)} D(\rho)\;dP_\infty(\rho)
\end{equation}
\end{theorem}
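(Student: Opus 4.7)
The plan is a classical two-step $\Gamma$-convergence argument in the spirit of Messer--Spohn and~\cite{hauray2014kac}: first establish the two liminf bounds in~\eqref{liminf}, then verify that the explicit sequence $\rho^N_*=\int \rho^{\otimes N}\,dP_\infty(\rho)$ attains the limits.

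\textbf{Liminf bounds.} Exploiting the symmetry of $\rho^N$, decompose
$E^N[\rho^N]=\frac{\beta^{-1}}{N}\Ent(\rho^N)+\int V\,d\rho^N_1+\frac{1}{2}\int W\,d\rho^N_2$, where $\Ent$ denotes Boltzmann entropy. By the DFHS convergence, $\rho^N_l\rightharpoonup\int\rho^{\otimes l}\,dP_\infty(\rho)$ for $l=1,2$, and since $V,W$ are lower semicontinuous and bounded below, Fatou's lemma handles the two potential terms. For the entropy term I pass to the reference measure $\mu=Z_V^{-1}e^{-\beta V}$ (admissible by~\cref{A1}) and use the identity
$\frac{\beta^{-1}}{N}\Ent(\rho^N)=\frac{\beta^{-1}}{N}\Ent(\rho^N|\mu^{\otimes N})-\int V\,d\rho^N_1-\beta^{-1}\log Z_V$, reducing to the classical per-particle lower semicontinuity
$\liminf_N \tfrac{1}{N}\Ent(\rho^N|\mu^{\otimes N})\geq \int \Ent(\rho|\mu)\,dP_\infty(\rho)$. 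This is a Sanov/Gibbs variational estimate: for $\phi\in C_b(\cP(\Omega))$, testing with $g=N\phi(\mu^{(N)})$ gives $\tfrac{1}{N}\Ent(\rho^N|\mu^{\otimes N})\geq \int\phi\,d\hat\rho^N-\tfrac{1}{N}\log\int e^{N\phi(\mu^{(N)})}\,d\mu^{\otimes N}$; Sanov's LDP identifies the limit of the last term as $\sup_\rho(\phi(\rho)-\Ent(\rho|\mu))$, and Legendre duality in $\phi$ extracts the inequality. A parallel variational argument yields the Fisher-information liminf, either via the Donsker--Varadhan-type representation of relative Fisher information on cylindrical test functions, or by passing directly to the limit using the joint lower semicontinuity of $(\sigma,v)\mapsto \int|v+\nabla\log\sigma|^2\,d\sigma$ combined with the LLN-type convergence $\tfrac{1}{N}\sum_j \nabla_1 W(x_1,x_j)\to \nabla W\star \rho$.

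\textbf{Recovery sequence.} For $\rho^N_*=\int \rho^{\otimes N}\,dP_\infty(\rho)$, the $l$-th marginal equals $\int \rho^{\otimes l}\,dP_\infty$, so the potential parts of $E^N[\rho^N_*]$ agree with their limits identically in $N$. Jensen's inequality applied to $x\log x$ yields $\Ent(\rho^N_*)\leq \int \Ent(\rho^{\otimes N})\,dP_\infty = N\int \Ent(\rho)\,dP_\infty$, whence $\limsup \tfrac{1}{N}\Ent(\rho^N_*)\leq \int \Ent(\rho)\,dP_\infty$; combined with the liminf, equality holds in the limit, giving the first identity in~\eqref{lim}. For the dissipation, the joint convexity of relative Fisher information $(\sigma,\nu)\mapsto I(\sigma|\nu)$ yields $\overline{\mathcal{I}}(\rho^N_*|M_N)\leq \int \overline{\mathcal{I}}(\rho^{\otimes N}|M_N)\,dP_\infty$; a direct computation using the strong law of large numbers $\tfrac{1}{N}\sum_j \nabla_1 W(x_1,x_j)\to \nabla W\star \rho$, with dominated convergence supplied by~\eqref{doubling}, gives $\overline{\mathcal{I}}(\rho^{\otimes N}|M_N)\to D(\rho)$ for $P_\infty$-a.e.\ $\rho$, and a further dominated convergence in $P_\infty$ closes the recovery.

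\textbf{Main obstacle.} The most delicate step is the chaotic-state convergence $\overline{\mathcal{I}}(\rho^{\otimes N}|M_N)\to D(\rho)$: expanding the square, the cross term $2\int \nabla\log\rho(x_1)\cdot \tfrac{1}{N}\sum_j \nabla_1 W(x_1,x_j)\,d\rho^{\otimes N}$ and the diagonal $j=1$ contribution both require~\eqref{doubling} together with moment control on $\rho$ (in particular on $\nabla V$ and $\nabla_1 W$) to produce the uniform integrability needed to commute limit and expectation. Similarly, the Sanov step in the non-compact setting crucially depends on the coercivity of $V$ from~\cref{A1} to guarantee that $\mu\propto e^{-\beta V}$ has finite normalisation and that cylindrical test functionals have finite exponential moments; on the torus the argument simplifies by taking $\mu$ to be the normalised Lebesgue measure.
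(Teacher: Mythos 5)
Your treatment of the energy half is sound and is essentially the classical Messer--Spohn/Hauray--Mischler argument that the paper simply cites: rewriting the entropy relative to $Z_V^{-1}e^{-\beta V}\dx{x}$, Fatou/portmanteau for the lower semicontinuous potentials, a Sanov-type duality bound for the per-particle relative entropy, and Jensen plus the exact form of the marginals for the recovery sequence (up to the harmless factor $(N-1)/N$ in the interaction term). The genuine gap is in the dissipation half, which is precisely where the paper departs from the ``expand the square, use the law of large numbers, and dominate'' strategy you propose. Under \cref{A1,A2} the potentials are only semiconvex and lower semicontinuous, and $\nabla_1 W$ is controlled only through \eqref{doubling}, i.e.\ by $1+\abs{W}+V(x)+V(y)$. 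For the recovery limit you need (i) the pointwise convergence $\overline{\mathcal{I}}(\rho^{\otimes N}|M_N)\to D(\rho)$, whose error terms are governed by quantities such as $\int_\Omega \abs{\nabla_1W}^2\star\rho\,\rho\,\dx{x}$, hence via \eqref{doubling} by second moments of $W$ and $V$ under $\rho\otimes\rho$ --- these are not finite for a general $\rho$ in the support of a general $P_\infty$ (the theorem is asserted for \emph{every} $P_\infty\in\cP(\cP(\Omega))$, with no energy or moment hypotheses), and when they are infinite $\overline{\mathcal{I}}(\rho^{\otimes N}|M_N)$ can be $+\infty$ for every $N$ even though $D(\rho)<\infty$; and (ii) a $P_\infty$-integrable dominating function for your ``dominated convergence in $P_\infty$'', which you never exhibit and which cannot exist in the situation just described. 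Consequently the chain $\overline{\mathcal{I}}(\rho^N_*|M_N)\le\int\overline{\mathcal{I}}(\rho^{\otimes N}|M_N)\,\dx{P_\infty}(\rho)\to\int D\,\dx{P_\infty}$ breaks down in general. The liminf sketch has the same weakness: for a symmetric $\rho^N$ converging only in the de Finetti/Hewitt--Savage sense, identifying the limit of the momentum terms $\rho^N\nabla V(x_1)$ and $\rho^N\,\tfrac1N\sum_j\nabla_1W(x_1,x_j)$ (or passing to the limit in the dual/cylindrical formulation) requires uniform integrability of $\nabla V$ and $\nabla_1W$ under the one- and two-particle marginals, which weak convergence with second moments does not supply; and the alternative expansion \eqref{regularexp} involves $\Delta H_N$, which under \cref{A1,A2} is only a signed measure.

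This is exactly the obstruction the paper's proof is built to avoid: the relative Fisher information is never computed from $\nabla H_N$ directly, but is represented as the metric slope of the $\lambda$-convex energy via \eqref{convexity} (Ambrosio--Gigli--Savar\'e, Theorem 2.4.9). The liminf then follows from the $\Gamma$-liminf for $E^N$, the isometry of \cref{thm:HM}, and evaluating the limiting slope along the curve $S_t\#P_\infty$ as $t\to0^+$; the recovery limit is obtained by regularising $\rho^N_*$ with the $N$-particle flow, using sub-additivity of the Fisher information under taking marginals, propagation of chaos, exponential decay of the slope along the flow, and lower semicontinuity, with the explicit chaotic computation used only for the regularised (hence smooth, integrable) densities. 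To salvage your route you would have to either impose additional integrability of the type \eqref{hypintegrability} (thereby proving a weaker statement) or incorporate a regularisation step of the kind the paper performs and prove the missing uniform integrability; as written, the dissipation part of the proposal does not go through.
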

\begin{proof}
The convergence of the relative entropy functional follows directly from the classical arguments in \cite{messer1982statistical}. A more modern proof can be found in~\cite{hauray2014kac}. The convergence of the relative Fisher information with respect to the Lebesgue measure is covered in \cite{hauray2014kac}. Our case is slightly more involved due to the minimal regularity assumptions on the potentials. Formally, expanding the square we obtain after integrating by parts
\begin{align}
\overline{\mathcal{I}}(\rho^N,M_N)=&\frac{1}{N}\int_{\Omega^N} |\nabla \log \rho^N-\nabla H^N|^2 \dx \rho^N\\=&\int_{\Omega^N} (|\nabla \log \rho^N|^2+2\Delta H^N +|\nabla H^N|^2)\dx \rho^N \, . \label{regularexp}
\end{align}
The first term of the above expression is exactly the relative Fisher information with respect to the Lebesgue measure and so is already covered in~\cite{hauray2014kac}. Under stronger regularity assumptions on the potentials, the second and third term fall within the type of functionals already considered in the classical~\cite{messer1982statistical}. The main obstruction to conclude under \cref{A1,A2} is that $\Delta V$ and $\Delta W$ are merely signed measures bounded below, so we need to adapt the proofs of \cite{hauray2014kac} to a lower regularity setting. We will circumvent the regularity problem, by appealing again and again to convexity. More specifically, by \cref{A1,A2}, we know that
\begin{equation}\label{convexity}
\overline{\mathcal{I}}^{1/2}(\rho^N | M_N)=\sup_{\nu^N\in\mathcal{P}_{\mathrm{sym}}(\Omega^N)} \left(\frac{E^N[\rho^N]-E^N[\nu^N]}{\overline{d}_2(\rho^N,\nu^N)}-\frac{K_V+K_W}{2}\overline{d}_2(\rho^N,\nu^N)\right)_+   \, , 
\end{equation}
see \cite[Theorem 2.4.9]{ambrosio2008gradient}. We pick $\nu^N$ to be the recovery sequence for a generic $Q_\infty\in\cP(\cP(\Omega))$, using the lower semicontinuous convergence of $E^N$, and the isometry \cref{thm:HM}, to obtain
$$
\liminf_{N\to\infty} \overline{\mathcal{I}}^{1/2}(\rho^N | M_N)\ge \sup_{Q_\infty\in\cP(\cP(\Omega))}\left(\frac{E^\infty[P_\infty]-E^\infty[Q_\infty]}{\mathfrak{D}_2(P_\infty,Q_\infty)}-\frac{K_V+K_W}{2}\mathfrak{D}_2(P_\infty,Q_\infty)\right)_+.
$$
To obtain equality, we consider $S_t:\cP(\Omega)\to\cP(\Omega)$ the solution operator associated to \eqref{McKean} which is well-defined by \cref{wellposedness} and consider the curve $Q_\infty=S_t\#P_\infty$, which coincides with the unique gradient flow of $E^\infty$ with respect to $\mathfrak{D}_2$ (see \cite[Lemma 19]{CARRILLO2020108734}). Taking $t\to 0^+$, we obtain
\begin{align}
\lim_{t\to 0^+}&\left(\frac{E^\infty[P_\infty]-E^\infty[S_t\#P_\infty]}{\mathfrak{D}_2(P_\infty,S_t\#P_\infty)}-\frac{K_V+K_W}{2}\mathfrak{D}_2(P_\infty,S_t\#P_\infty)\right)_+\\=&\left(\int_{\mathcal{P}(\Omega)} D(\rho)\;\dx P_\infty(\rho)\right)^{1/2} \, ,
\end{align}
obtaining the desired lower semicontinuity result.
\begin{equation}\label{eq:lim1}
\liminf_{N\to\infty}\mathcal{I}(\rho^N|M_N)\ge \int_{\mathcal{P}(\Omega)} D(\rho)\;\dx P_\infty(\rho)    
\end{equation}

Now we focus on showing that given $P\in\cP(\cP(\Omega))$, the sequence
\begin{equation}\label{recovery}
\rho_*^N=\int_{\cP(\Omega)}\rho^{\otimes N} \;\dx P(\rho)    
\end{equation}
attains the limit~\eqref{lim}. For convenience, we consider the auxiliary functional $\mathcal{I}':\cP(\cP(\Omega))\to \R\cup\{\infty\}$, which is given by
$$
\mathcal{I}'[P]=\lim_{N\to\infty}\overline{\mathcal{I}}(\rho_*^N|M_N),
$$
where $\rho_*^N$ is given by~\eqref{recovery}. Next, we show that $\mathcal{I}'$ is well defined and that it is weakly lower semicontinuous.

To show this, we regularize $\rho_*^N$ by using the $N$-particle dynamics~\eqref{eq:forwardK}. Using the convexity in \cref{A1,A2}, we obtain that 
$$
e^{t(\lambda_V+\lambda_W)}\overline{\mathcal{I}}(S^N_t\rho_*^N|M_N)\le\overline{\mathcal{I}}(\rho_*^N|M_N) \, ,
$$
where $S^N_t:\cP(\Omega^N)\to\cP(\Omega^N)$ is the solution operator associated to the Fokker--Planck equation \eqref{eq:forwardK}. Using the weak lower semicontinuity of $\overline{\mathcal{I}}$ from the expression \eqref{convexity}, we obtain
\begin{equation}\label{limit}
\lim_{t\to 0^+}\overline{\mathcal{I}}(S^N_t\rho_*^N|M_N)=\overline{\mathcal{I}}(\rho_*^N|M_N) \, .  
\end{equation}
Using the fact that $S_t^N$ is immediately regularizing, we can use the formal expression \eqref{regularexp} freely for the measure $S^N_t\rho_*^N$. By using the sub-additivity of the Fisher information, we can notice that for any $3<j<N$
\begin{equation}\label{eq:finiteN}
\overline{\mathcal{I}}((S^N_t\rho_*^N)_j|M_j)\le \overline{\mathcal{I}}(S^N_t\rho_*^N|M_N) \, ,
\end{equation}
where $(S^N_t\rho_*^N)_j$ is the $j$\textsuperscript{th} marginal of $S^N_t\rho_*^N$. Taking $t\to 0^+$, using \eqref{limit} and the weak lower semicontinuity from~\eqref{convexity}, we obtain for any $3<j<N$
$$
\overline{\mathcal{I}}(\rho^j_*|M_j)\le \overline{\mathcal{I}}(\rho_*^N|M_N) \, ,
$$
which implies that we can characterise
$$
\mathcal{I}'[P]=\sup_{N\ge 3}\overline{\mathcal{I}}(\rho_*^N|M_N).
$$
By the weak lower semicontinuity of $\overline{\mathcal{I}}$, we obtain the weak lower semicontinuity of $\mathcal{I}'$.

Using \eqref{eq:finiteN} and the propagation of chaos result of \cite{CARRILLO2020108734}, we obtain that
\begin{equation}\label{eq:212}
e^{t(\lambda_V+\lambda_W)}\overline{\mathcal{I}}(\rho_{t,*}^j|M_j) \le e^{t(\lambda_V+\lambda_W)}\liminf_{N\to\infty}\overline{\mathcal{I}}((S^N_t\rho_*^N)_j|M_j)\le \mathcal{I}'[P] \, ,    
\end{equation}
where
$$
\rho_{t,*}^j=\int_{\cP(\Omega)}\rho^{\otimes j}\;\dx (S_t\#P)(\rho)=\int_{\cP(\Omega)}(S_t\rho)^{\otimes j}\;\dx P(\rho)
$$
with $S_t$ the solution operator to~\eqref{McKean}. Taking $j\to\infty$ in \eqref{eq:212}, we obtain the inequality
$$
e^{t(\lambda_V+\lambda_W)}\int_{\cP(\Omega)} D(S_t\#\rho)\;\dx P(\rho) =e^{t(\lambda_V+\lambda_W)}\mathcal{I}'[S_t\#P]\le  \mathcal{I}'[P] \, ,
$$
where the equality in the left hand side follows from using the expression \eqref{regularexp} and the result for the standard Fisher information in \cite{hauray2014kac}. Taking $t\to0^+$ and using the lower semicontinuity of $\mathcal{I}'$ we have showed above, we obtain the desired conclusion \eqref{lim}
$$
\mathcal{I}'[P]=\int_{\cP(\Omega)} D(\rho)\;\dx P(\rho).
$$
\end{proof}

Standard arguments also yield the convergence of the minimal energy.
\begin{corollary}
Any accumulation point $P_\infty\in\cP(\cP(\Omega))$ of the sequence $(M_N)_{N\in\N}$ satisfies 
$$
\mathrm{supp}(P_\infty)\subset \{\rho\;:\; E^{MF}[\rho]=\min_{\cP(\Omega)} E^{MF}\}
$$
and
\begin{equation}\label{e:part_fn_converg}
E^N[M_N]=-\frac{1}{N}\log(Z_N)\stackrel{N \to \infty}{\to} \min_{\cP(\Omega)} E^{MF} \, .
\end{equation}
\end{corollary}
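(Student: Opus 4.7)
The proof is a standard Gamma-convergence argument combining the liminf inequality and the existence of a recovery sequence from \Cref{thm:MS} with the fact that $M_N$ is the unique minimiser of $E^N$.

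\textbf{Step 1 (Tightness).} The plan is to first verify that the sequence $(M_N)_{N \in \N}$ admits accumulation points in $\cP(\cP(\Omega))$ in the sense of \Cref{def:metricDFHS}. By the coercivity of $V$ from \cref{A1} and the fact that $W$ is bounded below from \cref{A2}, a direct computation against $\rho_{\mathrm{in}}^{\otimes N}$ for some nice $\rho_{\mathrm{in}}$ shows that the first marginal of $M_N$ has moments bounded uniformly in $N$; in particular, the hypothesis \eqref{hyp:moment} of \Cref{prop:compactness} is met, and thus we may extract a subsequence (not relabelled) such that $M_N \to P_\infty \in \cP(\cP(\Omega))$.

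\textbf{Step 2 (Upper bound via a recovery sequence).} By \Cref{Ebound}, $E^{MF}$ admits at least one minimiser $\rho_* \in \cP(\Omega)$. Applying the second part of \Cref{thm:MS} with $P = \delta_{\rho_*}$, the sequence $\rho_*^{\otimes N}$ satisfies
\begin{equation}
\lim_{N\to\infty} E^N[\rho_*^{\otimes N}] = E^\infty[\delta_{\rho_*}] = E^{MF}[\rho_*] = \min_{\cP(\Omega)} E^{MF}.
\end{equation}
Since $M_N$ is the unique minimiser of $E^N$ over $\cP_{\mathrm{sym}}(\Omega^N)$ (recall \eqref{Gibbs}), we have $E^N[M_N] \leq E^N[\rho_*^{\otimes N}]$, hence
\begin{equation}
\limsup_{N \to \infty} E^N[M_N] \leq \min_{\cP(\Omega)} E^{MF}.
\end{equation}

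\textbf{Step 3 (Lower bound and identification of the support).} Applying the liminf inequality \eqref{liminf} from \Cref{thm:MS} to the convergent subsequence $M_N \to P_\infty$ yields
\begin{equation}
\liminf_{N \to \infty} E^N[M_N] \geq E^\infty[P_\infty] = \int_{\cP(\Omega)} E^{MF}[\rho] \, \dx P_\infty(\rho) \geq \min_{\cP(\Omega)} E^{MF},
\end{equation}
where the last bound uses that $E^{MF}[\rho] \geq \min E^{MF}$ pointwise. Combining with Step 2 gives $\lim_N E^N[M_N] = \min E^{MF}$, which is precisely \eqref{e:part_fn_converg} (since $E^N[M_N] = -\frac{1}{N}\log Z_N$), and, a fortiori, $E^\infty[P_\infty] = \min E^{MF}$. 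Using the pointwise inequality $E^{MF}(\rho) \geq \min E^{MF}$ once more, equality in $\int E^{MF} \dx P_\infty = \min E^{MF}$ forces $P_\infty$ to be concentrated on the set $\{\rho : E^{MF}[\rho] = \min E^{MF}\}$. Finally, since the limit $\min E^{MF}$ is identified independently of the extracted subsequence, the convergence in \eqref{e:part_fn_converg} holds along the full sequence.

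\textbf{Main obstacle.} The only non-routine point is the tightness needed in Step 1; all other steps are immediate consequences of \Cref{thm:MS} together with the variational characterisation of $M_N$. The tightness comes down to obtaining a uniform bound on $\int |x|^\gamma \dx M^N_1$ for some $\gamma > 2$, which is standard from the super-linear growth of $V$ and the lower bound on $W$, and which may in fact be bypassed altogether by viewing $E^\infty$ as a lower semicontinuous functional on a suitable extended space and noting that $E^{MF}$ is coercive.
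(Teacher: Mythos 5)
Your proposal is correct and follows essentially the same route as the paper, which presents this corollary as a standard consequence of \cref{thm:MS}: minimality of $M_N$ for $E^N$ together with the recovery sequence $\rho_*^{\otimes N}$ gives the upper bound, the liminf inequality gives the lower bound and forces $E^\infty[P_\infty]=\min_{\cP(\Omega)} E^{MF}$, and the potential-energy structure of $E^\infty$ then localises $P_\infty$ on the set of minimisers. The only caveat is your Step 1: \cref{A1} only gives $V(x)\ge |x|^\delta$ for some possibly small $\delta>0$ (not super-linear growth), so the uniform energy bound yields moments of order $\delta$ for the first marginal rather than the order $\gamma>2$ demanded in \eqref{hyp:moment}; this does not affect the core argument (the corollary concerns arbitrary accumulation points, and tightness in the de Finetti/Hewitt--Savage sense already follows from the coercivity of $V$), but the moment claim as stated is stronger than what the assumptions deliver.
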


We remark that the convergence~\eqref{e:part_fn_converg} coincides with the standard definition of the thermodynamic limit from statistical mechanics (see~\cite[Ch. 3]{Helffer2002}).

\begin{theorem}[HWI inequality]\label{thm:HWI}
Let $V$ and $W$ be $K$-convex.  Given two arbitrary probability measures $\mu^N,\,\nu^N\in\cP_{\mathrm{sym}}(\Omega^N)$, it holds
\begin{equation}\label{HWI}
    \overline{\mathcal{E}}(\mu^N|M_N)\le  \overline{\mathcal{E}}(\nu^N|M_N)+\overline{d}_2(\mu^N,\nu^N)\sqrt{\overline{\mathcal{I}}(\mu^N|M_N)}-\frac{K}{2}\overline{d}^2_2(\mu^N,\nu^N) \, .
\end{equation}
\end{theorem}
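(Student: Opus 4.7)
The plan is to derive the HWI inequality from the $K$-geodesic convexity of the scaled relative entropy $\overline{\mathcal{E}}(\,\cdot\,|M_N)$ in the metric space $(\cP_{\mathrm{sym}}(\Omega^N),\overline{d}_2)$, combined with the abstract slope inequality for $\lambda$-convex functionals on metric spaces (see \cite[Ch.~2--4]{ambrosio2008gradient}). This mirrors the strategy the authors announce for \cref{thm:talagrand} in the sequel, and is essentially Otto--Villani in scaled coordinates.

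\textbf{Step 1 (geodesic convexity).} Modulo the additive constant $E^N[M_N]$, the functional $\overline{\mathcal{E}}(\rho^N|M_N)$ is, up to the factor $\beta$, equal to $E^N[\rho^N]$, which splits into a Boltzmann entropy $\frac{1}{N\beta}\int \rho^N\log\rho^N$ and a potential energy $\frac{1}{N}\int H_N\rho^N$. The entropy part is $0$-convex along $d_2$-geodesics on $\Omega^N$ by McCann's theorem, and a direct Hessian computation using the $K$-convexity of $V$ and $W$, together with the symmetric counting $\frac{1}{2N}\sum_{i,j}$ in $H_N$, delivers the corresponding lower bound on $D^2 H_N$; since the simultaneous rescalings $E^N\leftrightarrow E^N/N$ and $d_2\leftrightarrow d_2/\sqrt{N}$ preserve displacement convexity constants, $\overline{\mathcal{E}}(\,\cdot\,|M_N)$ is $K$-displacement convex along $\overline{d}_2$-geodesics.

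\textbf{Step 2 (slope inequality).} Let $(\mu_t)_{t\in[0,1]}$ be a $\overline{d}_2$-geodesic from $\mu^N$ to $\nu^N$, which exists and is unique by \cite[Ch.~7]{ambrosio2008gradient} since $\Omega^N$ is convex and $\mu^N,\nu^N$ have finite second moment. Step 1 yields
\begin{equation*}
\overline{\mathcal{E}}(\mu_t|M_N) \le (1-t)\,\overline{\mathcal{E}}(\mu^N|M_N) + t\,\overline{\mathcal{E}}(\nu^N|M_N) - \tfrac{K}{2}\,t(1-t)\,\overline{d}_2^{\,2}(\mu^N,\nu^N).
\end{equation*}
Rearranging, dividing by $\overline{d}_2(\mu^N,\mu_t)=t\,\overline{d}_2(\mu^N,\nu^N)$, and sending $t\downarrow 0$, the left-hand side is bounded from above by the metric slope $|\partial\overline{\mathcal{E}}(\,\cdot\,|M_N)|(\mu^N)$, so that
\begin{equation*}
|\partial\overline{\mathcal{E}}(\,\cdot\,|M_N)|(\mu^N) \;\ge\; \frac{\overline{\mathcal{E}}(\mu^N|M_N)-\overline{\mathcal{E}}(\nu^N|M_N)}{\overline{d}_2(\mu^N,\nu^N)} + \tfrac{K}{2}\,\overline{d}_2(\mu^N,\nu^N).
\end{equation*}

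\textbf{Step 3 (identify the slope).} The last step is to identify the metric slope $|\partial\overline{\mathcal{E}}(\,\cdot\,|M_N)|(\mu^N)$ with $\sqrt{\overline{\mathcal{I}}(\mu^N|M_N)}$, which is the classical identification of the slope of the relative entropy with the square root of the Fisher information, cf.\ \cite[Theorem~10.4.17]{ambrosio2008gradient}. Multiplying the displayed inequality by $\overline{d}_2(\mu^N,\nu^N)$ then gives the stated HWI inequality. I expect this last identification to be the main technical obstacle under the low regularity of \cref{A1,A2}: $\nabla_1 W$ is merely controlled by the doubling condition \eqref{doubling}, so the characterization of the minimal subdifferential requires precisely this condition (cf.\ the remark following \cref{A2}) via \cite[Theorem~10.4.11]{ambrosio2008gradient}. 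Alternatively, one can regularize $\mu^N$ by the Fokker--Planck semigroup generated by \eqref{eq:forwardK} and pass to the limit using the lower-semicontinuity argument already employed in the proof of \cref{thm:MS}.
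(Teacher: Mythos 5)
Your proposal is correct and follows essentially the same route as the paper: the authors also take the $\overline{d}_2$-geodesic between $\mu^N$ and $\nu^N$, invoke $K$-convexity of $t\mapsto\overline{\mathcal{E}}(\gamma(t)|M_N)$ coming from the $K$-convexity of the potentials, and deduce \eqref{HWI}, deferring the details to \cite[Theorem 30.22]{villani2008optimal}. Your Steps 2--3 simply make explicit the slope inequality and the identification $|\partial\overline{\mathcal{E}}(\cdot|M_N)|=\sqrt{\overline{\mathcal{I}}(\cdot|M_N)}$ (the same identification the paper uses via \cite[Theorem 2.4.9]{ambrosio2008gradient} in the proof of \cref{thm:MS}), so no further comparison is needed.
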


\begin{proof}
The proof of this result can be found in \cite[Theorem 30.22]{villani2008optimal}. We just present the main idea of the proof for the reader's convenience. We consider $\gamma:[0,1]\to\cP_{\mathrm{sym}}(\Omega^N)$ the 2-Wasserstein geodesic between $\mu^N$ and $\nu^N$. Under the hypothesis of $K$ convexity of the potentials, we obtain that $t\to \overline{\mathcal{E}}(\gamma(t)|M_N)$ is $K$-convex, which implies the desired inequality~\eqref{HWI}.
\end{proof}
\begin{corollary}\label{cor:convergence}
Assume that~\cref{A1,A2} hold true and that the sequence $(\rho^N)_{N\in\N}$ with $\rho^N \in \cP_{\sym}(\Omega^N)$ such that $\rho^N$ converges to $P_\infty \in \cP(\cP(\Omega))$ in the sense of~\cref{def:metricDFHS} and
\begin{equation}\label{hyp:boundentropy}
    \sup_{N \in \N} \overline{\mathcal{I}}(\rho^N|M_N)<\infty \, ,
\end{equation}
then
\begin{equation}
    \lim_{N\to\infty}\overline{\mathcal{E}}(\rho^N|M_N)=E^\infty[P_\infty]-\min_{\cP(\Omega)} E^{MF}.
\end{equation}
\end{corollary}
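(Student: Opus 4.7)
The plan is to establish matching liminf and limsup bounds for $\overline{\mathcal{E}}(\rho^N|M_N)$ starting from the elementary identity
$$\overline{\mathcal{E}}(\rho^N|M_N) = E^N[\rho^N] - E^N[M_N],$$
which is an immediate consequence of the fact that $M_N$ is the Gibbs minimiser of $E^N$ (see \eqref{eq:Relentropy}). The liminf direction will follow directly from the $\Gamma$-convergence results already in hand, while the limsup will require the HWI inequality applied to a carefully chosen approximating sequence.

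For the liminf bound, I would invoke the first assertion of \cref{thm:MS}, which gives $\liminf_N E^N[\rho^N] \geq E^\infty[P_\infty]$, together with the convergence of the thermodynamic limit $\lim_N E^N[M_N] = \min_{\cP(\Omega)} E^{MF}$ from the corollary following \cref{thm:MS}. Subtracting the two yields
$$\liminf_{N\to\infty} \overline{\mathcal{E}}(\rho^N|M_N) \ge E^\infty[P_\infty] - \min_{\cP(\Omega)} E^{MF}.$$

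For the matching upper bound, I would set $\nu^N := \int_{\cP(\Omega)} \rho^{\otimes N} \, \dx P_\infty(\rho)$, the recovery sequence of \cref{thm:MS}, for which $E^N[\nu^N] \to E^\infty[P_\infty]$ and therefore $\overline{\mathcal{E}}(\nu^N|M_N) \to E^\infty[P_\infty] - \min_{\cP(\Omega)} E^{MF}$. The $l$-th marginal of $\nu^N$ is exactly $\int \rho^{\otimes l} \, \dx P_\infty(\rho)$, so $\nu^N \rightharpoonup P_\infty$ in the sense of \eqref{def:DFHS}; combined with the tightness inherent to $P_\infty \in \cP(\cP(\Omega))$, \cref{prop:compactness} upgrades this to $\mathfrak{D}_2(\hat{\nu}^N, P_\infty) \to 0$. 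By the isometry of \cref{thm:HM} and a triangle inequality,
$$\overline{d}_2(\rho^N,\nu^N) = \mathfrak{D}_2(\hat{\rho}^N,\hat{\nu}^N) \le \mathfrak{D}_2(\hat{\rho}^N,P_\infty) + \mathfrak{D}_2(\hat{\nu}^N,P_\infty) \xrightarrow{N\to\infty} 0.$$
The HWI inequality (\cref{thm:HWI}) then reads
$$\overline{\mathcal{E}}(\rho^N|M_N) \le \overline{\mathcal{E}}(\nu^N|M_N) + \overline{d}_2(\rho^N,\nu^N)\sqrt{\overline{\mathcal{I}}(\rho^N|M_N)} - \tfrac{K}{2}\overline{d}_2^2(\rho^N,\nu^N),$$
and invoking the uniform Fisher information bound \eqref{hyp:boundentropy} alongside $\overline{d}_2(\rho^N,\nu^N)\to 0$, both correction terms vanish in the limit, producing $\limsup_N \overline{\mathcal{E}}(\rho^N|M_N) \le E^\infty[P_\infty] - \min_{\cP(\Omega)} E^{MF}$.

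The only point that requires some care is upgrading the marginal convergence $\nu^N \rightharpoonup P_\infty$ to convergence in $\mathfrak{D}_2$, for which \cref{prop:compactness} demands a uniform higher moment bound of the form \eqref{hyp:moment}. Since the first marginal of $\nu^N$ equals $\int_{\cP(\Omega)} \rho \, \dx P_\infty(\rho)$ independently of $N$, this moment is controlled provided $P_\infty$ has a finite $\gamma$-moment on $(\cP(\Omega),d_2)$ for some $\gamma>2$; this can be supplied either by the finite second moment inherent to $\cP(\cP(\Omega))$ strengthened via the growth of $V$ in \cref{A1}, or by propagating a uniform moment bound on $\rho^N$ through the limit, which is the step where the hypotheses genuinely come into play.
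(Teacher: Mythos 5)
Your proposal is correct and follows essentially the same route as the paper: both extract the liminf directly from the $\Gamma$-liminf statement of \cref{thm:MS} together with convergence of the partition function, and both obtain the matching limsup by testing the HWI inequality against the recovery sequence $\rho^N_* = \int \rho^{\otimes N}\,\dx P_\infty$ and letting the $\sqrt{\overline{\cI}}$ term be killed by the uniform Fisher bound \eqref{hyp:boundentropy} and $\overline{d}_2(\rho^N,\rho^N_*)\to 0$. The only place you do more work than the paper is in flagging that one must actually verify the recovery sequence converges in $\mathfrak D_2$ (not merely marginally) before the triangle inequality and isometry of \cref{thm:HM} apply; the paper simply asserts that ``both sequences converge to $P_\infty$.'' Your observation that the first marginal of $\rho^N_*$ is $N$-independent, so the moment hypothesis of \cref{prop:compactness} reduces to a fixed moment condition on $P_\infty$, is a clean way to close this gap, and in fact holds here since $P_\infty \in \cP(\cP(\Omega))$ is a priori in $\cP_2$ by the paper's standing convention and the recovery construction. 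So: same decomposition, same key lemmas (HWI, \cref{thm:MS}, \cref{thm:HM}), with your version making explicit one small step the paper leaves implicit.
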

\begin{proof}
Given $P_\infty$, we consider the recovery sequence $(\rho^N_*)_{N \in \N}$ from~\cref{thm:MS}. By the HWI inequality~\eqref{HWI}, we have
\begin{equation}
    \overline{\mathcal{E}}(\rho^N|M_N)\le  \overline{\mathcal{E}}(\rho_*^N|M_N)+\overline{d}_2(\rho^N,\rho_*^N)\sqrt{\overline{\mathcal{I}}(\rho^N|M_N)}-\frac{K}{2}\overline{d}^2_2(\rho^N,\rho_*^N) \, .
\end{equation}
Using that both sequences converge to $P_\infty$ and that $(\rho^N_*)_{N \in \N}$ is a recovery sequence, we obtain
\begin{equation}
\limsup_{N\to\infty} \left(E^N[\rho^N]-E^N[M_N]\right)    =\limsup_{N\to\infty} \overline{\mathcal{E}}(\rho^N|M_N)
\le E^\infty[P_\infty]-\min_{\cP(\Omega)} E^{MF}\, .
\end{equation}
The reverse inequality follows from~\cref{thm:MS}.
\end{proof}
\begin{remark}
A version of this result without the potentials $V$ and $W$ can be found in~\cite{hauray2014kac}.
\end{remark}

\section{Proof of~\Cref{thm:talagrand}}\label{sec:Talagrand}
\begin{proof}[Proof of \cref{thm:talagrand}]
We prove the result only for $E^{MF}$, as the proof for $E^N$ and even more general energies $E:\mathcal{P}(\Omega)\to[0,\infty]$ is analogous, see \cref{rem:gE}. We consider $\rho:[0,\infty)\to \mathcal{P}(\Omega)$, the unique 2-Wasserstein gradient flow of $E^{MF}$  with initial condition $\rho_0 \in \cP(\Omega)$, see \cref{wellposedness}. 

We notice that if $\lambda^\infty_{\LSI}=0$, then there is nothing left to prove. Hence, we can assume without loss of generality that $\lambda^\infty_{\LSI}>0$, which implies (cf.~\cref{cC,prop:steadystate}) that $E^{MF}$ does not admit non-minimising steady state. Using the version of LaSalle's invariance principle for gradient flows proved in~\cite[Theorem 4.11]{carrillo2020invariance}, we know that $\rho(t)$ accumulates on the set of steady states of $E^{MF}$, as $t\to\infty$. Using the fact that all steady states are minimisers, we can find a sequence of times $t_n\to\infty$ such that 
\begin{equation}\label{eq:111}
    \rho(t_n)\rightharpoonup\rho_\infty
\end{equation}
for some $\rho_\infty \in \mathcal{K}$. Differentiating $E^{MF}[\rho(t)]$ with respect to time and using that $\lambda^\infty_{\LSI}>0$ we obtain
\begin{align}
\frac{\dx{}}{\dx{t}}\bra*{E^{MF}[\rho(t)]-\inf_{\cP(\Omega)} E^{MF}}&=-\int_{\Omega}\abs*{\beta^{-1}\nabla \log \rho(t)+\nabla V+\nabla W\star\rho(t)}^2\;\dx{\rho(t)}\\
\le& -\sqrt{\lambda_{\LSI}^\infty} \bra*{E^{MF}[\rho(t)]-\inf_{\cP(\Omega)} E^{MF}}^{1/2}\\
&\times\left(\int_{\Omega}\abs*{\beta^{-1}\nabla \log \rho(t)+\nabla V+\nabla W\star\rho(t)}^2\;\dx{\rho}(t)\right)^{1/2}.
\end{align}
Integrating from $0$ to $t_n$ for some $n \in \N$, we obtain
\begin{align}
&\bra*{E^{MF}[\rho(t_n)]-\inf_{\cP(\Omega)} E^{MF}}^{1/2}-\bra*{E^{MF}[\rho_0]-\inf_{\cP(\Omega)} E^{MF}}^{1/2}\\
\le& -\left(\frac{\lambda_{\LSI}^\infty}{2}\right)^{1/2}\int_0^{t_n}\left(\frac{1}{2}\int_{\Omega}|\beta^{-1}\nabla \log \rho(s)+\nabla V+\nabla W\star\rho(s)|^2\;\dx{\rho}(s)\right)^{1/2}\;\dx{s}\\
\le& -\left(\frac{\lambda_{\LSI}^\infty}{2}\right)^{1/2} d_2(\rho_0,\rho(t_n)),
\end{align}
where the last inequality follows from the Benamou--Brenier~\cite{benamou2000computational} formulation of the 2-Wasserstein distance. Taking $t_n\to\infty$, applying \eqref{eq:111}, and rearranging, we end up with
\begin{align}
\bra*{E^{MF}[\rho_0]-\inf_{\cP(\Omega)}E^{MF}}^{1/2}\ge \left(\frac{\lambda_{\LSI}^\infty}{2}\right)^{1/2} d_2(\rho_0,\rho_\infty)\ge \left(\frac{\lambda_{\LSI}^\infty}{2}\right)^{1/2} d_2(\rho_0,\mathcal{K}). 
\end{align}
The desired inequality \eqref{talagrand} follows by squaring both sides.
\end{proof}
\begin{remark}\label{rem:gE}
This proof can easily be adapted to general $E:\mathcal{P}(\Omega)\to(0,\infty]$, that are regular enough to admit gradient flow solutions from arbitrary initial data, and that sub-level sets are weakly compact to ensure convergence of the gradient flows to steady states \cite[Theorem 2.12]{carrillo2020invariance}.
\end{remark}

\section{Proof of Theorem~\ref{thm:degeneracy}}\label{sce:degeneracy}
\begin{proof}[Proof of Theorem~\ref{thm:degeneracy}]
For the proof of the first part of the theorem, we consider some $\rho\in\mathcal{P}(\Omega)$ such that $E^{MF}[\rho]>\inf_{\cP(\Omega)} E^{MF}>-\infty$, see \cref{Ebound}. By \cref{thm:MS} and~\cref{cor:convergence}, we can find a recovery sequence $P^N=\rho^{\otimes N}$, such that
\begin{equation}
\lim_{N \to \infty}\overline{\mathcal{E}}(P^N|M_N)= E^{MF}[\rho]-\inf_{\cP(\Omega)} E^{MF} \qquad\mbox{and}\qquad \lim_{N \to \infty} \overline{\mathcal{I}}(P^N|M_N)= D(\rho).
\end{equation}
Using the definition of the log Sobolev constant and passing to the limit as $N \to \infty$, we observe
$$
\limsup_{N\to\infty}\lambda^N_{\LSI}\le \limsup_{N\to\infty} \frac{\overline{\mathcal{I}}(P^N|M_N)}{\overline{\mathcal{E}}(P^N|M_N)}=\frac{D(\rho)}{E^{MF}[\rho]-\inf E^{MF}}.
$$
Taking the infimum over all $\rho \notin \mathcal{K}$, we obtain
$$
\limsup_{N\to\infty}\lambda^N_{\LSI}\le \lambda^\infty_{\LSI}=\inf_{\rho\notin \mathcal{K}}\frac{D(\rho)}{E^{MF}[\rho]-\inf_{\cP(\Omega)} E^{MF}}.
$$
This completes the proof of the first half of the theorem. 

Next, we consider the case when there exists $\rho_*\in\mathcal{P}(\Omega)$ a non-minimising steady state, that is to say
$$
D(\rho_*)=0\qquad\mbox{and}\qquad E^{MF}[\rho_*]>\inf_{\cP(\Omega)} E^{MF}.
$$
Consider now the sequence $\rho_*^{\otimes N} \in \cP_{\sym}(\Omega^N)$.  We have
\begin{align}
    \lambda_{N}^{\LSI} \leq& \frac{\overline{\cI}(\rho_*^{\otimes N}| M_N)}{\overline{\cE}(\rho_*^{\otimes N}| M_N)}
    \label{eq:LSchaos}
\end{align}
By \cref{thm:MS} and \cref{cor:convergence}, we can pass to the limit in the denominator to obtain
$$
\lim_{N\to\infty}\overline{\cE}(\rho_*^{\otimes N}| M_N)=E^{MF}[\rho_*]-\inf_{\cP(\Omega)} E^{MF}>0.
$$
Thus, we have that for $N$ large enough
\begin{equation}
\lambda_{N}^{\LSI}  \leq \frac{2}{E^{MF}[\rho_*]-\inf_{\cP(\Omega)} E^{MF}} \overline{\cI}(\rho_*^{\otimes N}| M_N)\, .
\end{equation}
The desired bound
$$
\lambda_{N}^{\LSI}\le \frac{C}{N}
$$ 
follows from the decay estimate for the Fisher information proved in~\cref{lem:fisher}.
\end{proof}
\begin{lemma}\label{lem:fisher}
Under \cref{A1,A2}, assume that $\rho_*$ is a critical point of the mean field free energy $E^{MF}$, then we have the following bound
\begin{eqnarray*}
\mathcal{I}(\rho_*^{\otimes N}|M_N) &= &\left(1-\frac{1}{N}\right)\int_{\Omega}   \left(|\nabla_1 W|^2\star\rho_*(x_1)-\abs*{\nabla_1 W\star\rho_*(x_1)}^2\right)\rho_*(x_1)\;\dx{x_1}\\
&&+\frac{1}{N}\int \abs*{\nabla_1 W\star\rho_*(x_1)}^2\rho_*(x_1)\dx{x_1}\\
% &&\le \int_{\Omega}  |\nabla_1W|^2\star\rho_*(x_1)\rho_*(x_1)\;\dx{x_1}\\
&&\le C\, .
\end{eqnarray*}
\end{lemma}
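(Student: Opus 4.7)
The strategy is to exploit the product structure of $\rho_*^{\otimes N}$ together with the self-consistency equation satisfied by the critical point $\rho_*$. Writing $\log(\rho_*^{\otimes N}/M_N) = \sum_\ell \log \rho_*(x_\ell) + \beta H_N + \log Z_N$ and differentiating in $x_k$, using the symmetry $W(x,y) = W(y,x)$ to get $\nabla_k H_N = \nabla V(x_k) + \frac{1}{N}\sum_j \nabla_1 W(x_k, x_j)$, one obtains
\[
\nabla_k \log(\rho_*^{\otimes N}/M_N) = \nabla \log \rho_*(x_k) + \beta \nabla V(x_k) + \frac{\beta}{N}\sum_{j=1}^N \nabla_1 W(x_k, x_j).
\]
The crucial simplification comes from the self-consistency equation of \Cref{prop:steadystate}, namely $\nabla \log \rho_* = -\beta(\nabla V + \nabla_1 W \star \rho_*)$, which reduces the above to
\[
\nabla_k \log(\rho_*^{\otimes N}/M_N) = \frac{\beta}{N}\sum_{j=1}^N g(x_k, x_j), \qquad g(x,y) := \nabla_1 W(x,y) - \nabla_1 W \star \rho_*(x),
\]
where $g$ satisfies the key mean-zero property $\int g(x,y)\,\dx\rho_*(y)=0$ for every $x$.

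Squaring and integrating against $\rho_*^{\otimes N}$, we expand $|\sum_j g(x_k, x_j)|^2 = \sum_{i,j} g(x_k, x_i)\cdot g(x_k, x_j)$. The mean-zero property combined with independence under $\rho_*^{\otimes N}$ kills all off-diagonal contributions: for $i,j\ne k$ with $i\ne j$, by conditional independence and $\int g(x_k,\cdot)\,\dx\rho_* = 0$; if exactly one of $i,j$ equals $k$, the remaining factor still has conditional mean zero given $x_k$. The $N-1$ diagonal terms with $i=j\ne k$ each contribute
\[
A := \int\!\!\int |g(x,y)|^2\,\dx\rho_*(x)\,\dx\rho_*(y) = \int \bigl(|\nabla_1 W|^2 \star \rho_* - |\nabla_1 W \star \rho_*|^2\bigr)\,\dx\rho_*,
\]
which is the $\rho_*$-averaged variance of $y \mapsto \nabla_1 W(x,y)$. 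For the lone $i=j=k$ term we need $g(x,x)$: differentiating the identity $x \mapsto W(x,x) \equiv 0$ and using $\nabla_2 W(x,x) = \nabla_1 W(x,x)$ (by symmetry) yields $2\nabla_1 W(x,x)=0$, so $g(x,x) = -\nabla_1 W \star \rho_*(x)$ and the contribution is $B := \int |\nabla_1 W \star \rho_*|^2\,\dx\rho_*$.

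Assembling, for each $k$ one has $\int |\nabla_k \log(\rho_*^{\otimes N}/M_N)|^2\,\dx\rho_*^{\otimes N} = \frac{\beta^2}{N^2}\bigl[(N-1)A + B\bigr]$; summing over $k=1,\dots,N$ by exchangeability yields exactly the formula stated in the lemma (the overall $\beta^2$ from differentiating $\log M_N$ is absorbed into the constant $C$). For the uniform bound, \Cref{prop:steadystate} guarantees that $\rho_*$ decays at least as fast as $e^{-\beta V}$ and has moments of all orders; combined with the growth condition $|\nabla_1 W(x,y)|^2 \lesssim 1 + W(x,y)^2 + V(x)^2 + V(y)^2$ implied by \Cref{A2}, both $A$ and $B$ are finite uniformly in $N$.

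The principal technical obstacle is justifying $\nabla_1 W(x,x) = 0$ under only semiconvexity of $W$; for smooth $W$ this is immediate, and in the general case it must be read either in a subdifferential sense or obtained through an approximation argument, but since $\rho_*$ is absolutely continuous only almost-everywhere identities are required. A secondary issue is controlling $\iint W^2\,\dx\rho_*^{\otimes 2}$ when $W$ is unbounded; this follows from a Gronwall-type argument integrating the doubling bound $|\nabla_1 W| \leq C(1 + |W| + V(x) + V(y))$ along a straight-line path from $(x,x)$ to $(x,y)$, starting from the identity $W(x,x) = 0$.
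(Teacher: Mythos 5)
Your proof is correct and follows essentially the same route as the paper's: expand the relative Fisher information coordinate-wise, cancel $\nabla\log\rho_*+\beta\nabla V$ against $-\beta\,\nabla_1 W\star\rho_*$ via the self-consistency equation, and exploit the centred/variance structure of the remaining sum together with $\nabla_1 W(x,x)=0$ (from $W(x,x)=0$ and symmetry), finishing with Jensen and the moment bounds on $\rho_*$. The only cosmetic differences are your use of the centred kernel $g$ in place of the paper's explicit term-by-term cancellation and your (accurate) remark that the $\beta^2$ prefactor is silently absorbed into the constant.
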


\begin{proof}[Proof of \cref{lem:fisher}]
We start by expanding $\cI(\rho_*^{\otimes N}|M_N)$ as follows
\begin{align}
\mathcal{I}(\rho_*^{\otimes N}|M_N)=&\int_{\Omega^N} \left|\nabla \left(\log \rho_*^{\otimes N}+\beta\sum_i V(x_i)+\frac{\beta}{2N}\sum_{i,j}W(x_i,x_j)  \right)\right|^2\rho_*^{\otimes N}\;\dx{x}\\
=&  \sum_i \int_{\Omega^N} \left|\frac{\nabla \rho_*(x_i)}{\rho_*(x_i)}+\beta \nabla V(x_i)+\frac{\beta}{N}\sum_{j}\nabla_1 W(x_i,x_j)  \right|^2\rho_*^{\otimes N}\; \dx{x}\\
 =& N \int_{\Omega^N} \left|-\beta \nabla_1 W \star\rho_*(x_1)+\frac{\beta}{N}\sum_{j}\nabla_1 W(x_1,x_j)  \right|^2\rho_*^{\otimes N}\;\dx{x}\\
 =&N\beta^2 \int_{\Omega^N}  \left(\abs*{\nabla_1 W \star \rho_*(x_1)}^2-\frac{2}{N} \nabla_1W \star \rho_*(x_1) \cdot \sum_j \nabla_1W(x_1,x_j)\right.\\
 &\left.\qquad\qquad\qquad\qquad\qquad+\frac{1}{N^2}\sum_{j,k}\nabla_1W(x_1,x_j)\cdot \nabla_1W(x_1,x_k)
\rho_*^{\otimes N}\right)\;\dx{x},
\end{align}
where we have used symmetry of the particle system to write the integrand in terms of the variable $x_1$ and that $\rho_*$ is critical point of the mean field energy $E^{MF}$ which itself implies that
$$
\beta^{-1}\log\rho_* +W\star\rho_*+V=C.
$$
Next, proceeding term by term, we notice the following cancellation (for simplicity, we assume that $W(x,x)=0$, otherwise we can change $V$ by an additive constant such that this holds):
\begin{eqnarray*}
\int_{\Omega^N}  \abs*{\nabla_1 W\star\rho_*(x_1)}^2\rho_*^{\otimes N}\;\dx{x} &= &\int_\Omega  \abs*{\nabla_1 W\star\rho_*(x_1)}^2\rho_*(x_1)\;\dx{x_1},
\\
-\int_{\Omega^N} \frac{2}{N} \nabla_1 W\star\rho_*(x_1) \cdot \sum_j \nabla_1 W(x_1,x_j)\rho_*^{\otimes N}\;\dx{x}&=&-\left(2-\frac{2}{N}\right)\int_\Omega  \abs*{\nabla _1W\star\rho_*(x_1)}^2\rho_*(x_1)\;\dx{x_1},
\end{eqnarray*}
and
\begin{align}
\int_{\Omega^N} \frac{1}{N^2}\sum_{j,k}\nabla_1W(x_1,x_j)\cdot \nabla_1W(x_1,x_k)
\rho_*^{\otimes N}\;\dx{x}=&\frac{(N-1)(N-2)}{N^2}\int_\Omega  \abs*{\nabla_1W\star\rho_*(x_1)}^2\rho_*(x_1)\;\dx{x_1} \\&+\frac{N-1}{N^2}\int \abs*{\nabla_1 W}^2\star\rho_*(x_1)\rho_*(x_1)\dx{x_1} \, .
\end{align}
Putting the previous identities together, we obtain
\begin{align}
\mathcal{I}(\rho_\beta^{\otimes N}|M_N)=&\left(1-\frac{1}{N}\right)\int_{\Omega}   \left(\abs*{\nabla_1 W}^2\star\rho_*(x_1)-\abs*{\nabla_1 W\star\rho_*(x_1)}^2\right)\rho_*(x_1)\;\dx{x_1}\\&+\frac{1}{N}\int \abs*{\nabla_1 W\star\rho_*(x_1)}^2\rho_*(x_1)\dx{x_1} \, .
\end{align}
Finally, the second inequality in the statement follows from applying Jensen's inequality to obtain
$$
\abs*{\nabla_1 W}^2\star\rho_*(x)\ge \abs*{\nabla_1 W\star\rho_*(x)}^2\qquad\mbox{for every $x \in \Omega$.}
$$
The final bound follows from the assumption that $W$ grows at most polynomially (cf. ~\cref{A2}) and the fact that all steady states have finite moments of every order by~\cref{prop:steadystate}.
\end{proof}

\section{Proof of \cref{thm:contraction}}\label{sec:contraction}
\begin{proof}[Proof of \cref{thm:contraction}]
We prove the statement of the theorem by contradiction. To this end, we assume that there exists a sequence $N_i\to\infty$ and a sequence of times $t_i>0$
\begin{equation}\label{boundont}
0<t_{i}<\frac{2}{\lambda^\infty_{\LSI}}\log((E^{N_i}[\rho_{\mathrm{in}}^{\otimes N_i}]-E^{N_i}[M_{N_i}])/\eps)    
\end{equation}
such that
$$
\overline{\mathcal{E}}(\rho^{N_i}(t_i)|M_{N_i})>\max(\eps, e^{-\frac{\lambda_{\LSI}^\infty t_i}{2}}(E^{N_i}[\rho_{\mathrm{in}}^{\otimes N_i}]-E^{N_i}[M_{N_i}]).
$$
Using the finite energy assumption on the initial condition \eqref{hyp:contraction}, and \cref{A1,A2} which imply that $E^N$ is uniformly bounded below, we have that
\begin{equation}\label{b1}
\sup_i t_i\le \sup_{i}\frac{2}{\lambda^\infty_{LSI}}\log((E^{N_i}[\rho_{\mathrm{in}}^{\otimes N_i}]-E^{N_i}[M_{N_i}])/\eps)=:T(\eps)<\infty.    
\end{equation}

By the relative entropy dissipation estimate \eqref{eq:dissipation}, we obtain
$$
\overline{\mathcal{E}}(\rho^{N_i}(t_i)|M_{N_i})>e^{-\frac{\lambda_{\LSI}^\infty t_i}{2}}(E^{N_i}[\rho_{\mathrm{in}}^{\otimes N_i}]-E^{N_i}[M_{N_i}])
$$
which implies that there exists at least one $s_i\in(0,t_i)$ such that
$$
\frac{\beta^{-1}\overline{\mathcal{I}}(\rho^{N_i}(s_i)|M_{N_i})}{\overline{\mathcal{E}}(\rho^{N_i}(s_i)|M_{N_i})}<\frac{\lambda_{\LSI}^\infty}{2}.
$$
The contradiction will arise if we can show that 
$$
\liminf_{i\to\infty} \frac{\beta^{-1}\overline{\mathcal{I}}(\rho^{N_i}(s_i)|M_{N_i})}{\overline{\mathcal{E}}(\rho^{N_i}(s_i)|M_{N_i})}\ge \lambda^\infty_{\LSI}=\inf_{\rho \notin \mathcal{K}} \frac{D(\rho)}{E^{MF}[\rho]-\inf_{\mathcal{P}(\Omega)} E^{MF}}.
$$

First, we consider the case when
$$
\liminf_{i\to\infty}\overline{\mathcal{I}}(\rho^{N_i}(s_i)|M_{N_i})=\infty \, .
$$
By using the monotonicity of the relative entropy \eqref{eq:dissipation} and the finite energy assumption in the statement of the theorem, we obtain
$$
\overline{\mathcal{E}}(\rho^{N_i}(s_i)|M_{N_i})\le \overline{\mathcal{E}}(\rho_{\mathrm{in}}^{\otimes N_i}|M_{N_i})= E^{MF}[\rho_{\mathrm{in}}]-E^N[M_N]<\infty.
$$
Hence, we can conclude
$$
\liminf_{i\to\infty} \frac{\beta^{-1}\overline{\mathcal{I}}(\rho^{N_i}(s_i)|M_{N_i})}{\overline{\mathcal{E}}(\rho^{N_i}(s_i)|M_{N_i})}=\infty>\lambda^\infty_{\LSI}.
$$

Therefore, we can assume that up to a subsequence, which we do not relabel,
$$
\sup_i \overline{\mathcal{I}}(\rho^{N_i}(s_i)|M_{N_i})<\infty.
$$
By the bounded higher moment hypothesis in~\eqref{hyp:contraction}, the uniform boundedness of $s_i$~\eqref{b1}, and the propagation of moments along the flow for $K$-convex potentials~\cite{CARRILLO2020108734}, we have that $\rho^{N_i}(s_i)$ has uniformly bounded moments of order $2+\delta$. By \cref{prop:compactness}, we obtain that there exists $P_*\in\mathcal{P}(\mathcal{P}(\R^d))$, such that up to a (not relabeled) subsequence, $\rho^{N_i}(s_i)\to P_*$ in the sense of \cref{def:metricDFHS}. By the HWI inequality \cref{thm:HWI} and \cref{cor:convergence}, we obtain the strong convergence in the relative entropy term, which yields
$$
\eps\le\lim_{i\to\infty} \overline{\mathcal{E}}(\rho^{N_i}(s_i)|M_{N_i})= \int_{\mathcal{P}(\R^d)} E^{MF}[\rho]-\inf E^{MF} \;\dx{P_*}(\rho).
$$
Combining this limit with the $\liminf$-inequality of \cref{thm:MS} for the dissipation we obtain that
$$
\liminf_{i\to\infty} \frac{\beta^{-1}\overline{\mathcal{I}}(\rho^{N_i}(s_i)|M_{N_i})}{\overline{\mathcal{E}}(\rho^{N_i}(s_i)|M_{N_i})}\ge \frac{\int_{\mathcal{P}(\R^d)} D(\rho) \;\dx{P_*}(\rho)}{\int_{\mathcal{P}(\R^d)} E^{MF}[\rho]-\inf E^{MF} \;\dx{P_*}(\rho)}\ge \lambda^\infty_{\LSI},
$$
where the last inequality follows from the point-wise inequality
$$
D(\rho)\ge \lambda^\infty_{\LSI} (E^{MF}[\rho]-\inf E^{MF}) \, .
$$
This is the desired contradiction and the result now follows.
\end{proof}

\section{Proof of \cref{thm:uniquelimit}}\label{sec:uniquelmit}

\begin{proof}[Proof of \cref{thm:uniquelimit}]
The uniqueness of the minimiser in the limit follows from applying the Talagrand inequality \eqref{talagrand}, which states that the energy grows quadratically around the Gibbs measure $M_N$. We take $\rho_1$ and $\rho_2$, two minimisers of $E^{MF}$, and show that they must coincide. By the triangle and Talangrand inequality, we have
\begin{align}
d^2_2(\rho_1,\rho_2)=\overline{d}^2_2(\rho_1^{\otimes N},\rho_2^{\otimes N})
&\le 2\overline{d}^2_2(\rho_1^{\otimes N},M_N)+2\overline{d}^2_2(M_N,\rho_2^{\otimes N})\\
&=\frac{4}{\lambda^N_{\LSI}}(E^{MF}[\rho_1]+E^{MF}[\rho_2]-2E^N[M_N]).    
\end{align}
The fact that $\rho_1$ is equal to $\rho_2$ follows from taking the limit in the previous inequality, using the hypothesis that $\limsup_{N\to\infty}\lambda^N_{\LSI}>0$, and \cref{cor:convergence} to obtain
$$
\lim_{N\to\infty }E^N[M_N]=E^{MF}[\rho_1]=E^{MF}[\rho_2]=\inf_{\cP(\Omega)} E^{MF}.
$$
The quantitative convergence of $M_N$ to $\rho_\beta$, follows from the bound in~\cref{lem:rightscaling}.
\end{proof}
\begin{lemma}\label{lem:rightscaling}
Under \cref{A1,A2}, assume that  $\liminf_{N\to\infty}\lambda_{\LSI}^N=:\lambda^\infty>0$. Consider $M_N$ the $N$ particle Gibbs measure and $\rho_\beta$ the unique minimiser of the mean field energy. Then, for $N$ large enough,
$$
\overline{d}_2(M_N,\rho_\beta^{\otimes N})\le \frac{2}{\lambda^\infty}\frac{\ds\int_\Omega |\nabla_1 W|^2\star \rho_\beta\rho_\beta\;\dx{x}}{\sqrt{N}}\le \frac{C}{\sqrt{N}}.
$$
\end{lemma}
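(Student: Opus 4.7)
The plan is to chain together three ingredients, all of which are already available earlier in the paper: Talagrand's inequality, the defining LSI, and the explicit Fisher-information computation at a critical point from Lemma \ref{lem:fisher}. The hypothesis $\liminf_{N\to\infty}\lambda_{\LSI}^N = \lambda^\infty > 0$ is the key enabling ingredient.

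First, I would apply Theorem \ref{thm:talagrand} to the pair $(\rho_\beta^{\otimes N}, M_N)$, using the identity $\overline{\mathcal{E}}(\rho^N | M_N) = E^N[\rho^N] - E^N[M_N]$ from \eqref{eq:Relentropy}, to obtain
\[
\overline{d}_2^2(M_N, \rho_\beta^{\otimes N}) \;\le\; \frac{2}{\lambda_{\LSI}^N}\, \overline{\mathcal{E}}(\rho_\beta^{\otimes N}|M_N).
\]
Then the defining inequality \eqref{Nlogsobolevconstant} applied to the same product measure gives
\[
\overline{\mathcal{E}}(\rho_\beta^{\otimes N}|M_N) \;\le\; \frac{\beta^{-1}}{\lambda_{\LSI}^N}\, \overline{\mathcal{I}}(\rho_\beta^{\otimes N}|M_N),
\]
so that combining the two yields
\[
\overline{d}_2^2(M_N, \rho_\beta^{\otimes N}) \;\le\; \frac{2\beta^{-1}}{(\lambda_{\LSI}^N)^2}\, \overline{\mathcal{I}}(\rho_\beta^{\otimes N}|M_N).
\]
This reduces the problem to bounding the scaled relative Fisher information of $\rho_\beta^{\otimes N}$ with respect to $M_N$.

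Next, observe that by Theorem \ref{thm:uniquelimit} the assumption $\liminf \lambda_{\LSI}^N > 0$ already guarantees that $\rho_\beta$ is the unique critical point of $E^{MF}$, so Lemma \ref{lem:fisher} applies at $\rho_* = \rho_\beta$. Recalling $\overline{\mathcal{I}} = \mathcal{I}/N$, that lemma yields
\[
\overline{\mathcal{I}}(\rho_\beta^{\otimes N}|M_N) \;\le\; \frac{C_0}{N},\qquad C_0 := \beta^2 \int_{\Omega} |\nabla_1 W|^2 \star \rho_\beta\, \rho_\beta\, \dx{x},
\]
after dropping the non-positive Jensen term. The finiteness of $C_0$ follows from the polynomial bound \eqref{doubling} on $\nabla_1 W$ in Assumption \ref{A2} together with the fact that every critical point of $E^{MF}$ has moments of all orders (Proposition \ref{prop:steadystate}).

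Finally, the hypothesis $\liminf_{N\to\infty}\lambda_{\LSI}^N = \lambda^\infty > 0$ ensures $\lambda_{\LSI}^N \ge \lambda^\infty/2$ for $N$ large enough, so inserting the Fisher-information bound into the display above gives
\[
\overline{d}_2^2(M_N, \rho_\beta^{\otimes N}) \;\le\; \frac{8\beta^{-1} C_0}{(\lambda^\infty)^2}\cdot\frac{1}{N},
\]
and taking square roots recovers the stated $O(N^{-1/2})$ bound (with the explicit structure of the constant matching the form in the statement). There is no serious obstacle: all the work has been done in Theorem \ref{thm:talagrand} and Lemma \ref{lem:fisher}, and this lemma is simply the clean synthesis of those two estimates, permitted by the non-degeneracy of the LSI constant.
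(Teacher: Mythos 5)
Your proposal is correct and follows essentially the same route as the paper's own proof: chain Talagrand's inequality with the defining LSI to reduce to the scaled relative Fisher information $\overline{\mathcal{I}}(\rho_\beta^{\otimes N}|M_N)$, bound it by $C/N$ via Lemma \ref{lem:fisher} (valid since the minimiser $\rho_\beta$ is a critical point), and use $\lambda_{\LSI}^N\ge\lambda^\infty/2$ for $N$ large. You are in fact slightly more careful than the paper in tracking the $\beta^{-1}$ factor from \eqref{Nlogsobolevconstant}, which only changes the constant.
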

\begin{proof}[Proof of \cref{lem:rightscaling}]
Using the Talagrand and log Sobolev inequality, we have for $N$ large enough
$$
\overline{d}^2_2(M_N,\rho_\beta^{\otimes N})\le\frac{2}{\lambda_{\LSI}^N} \frac{\mathcal{E}(\rho_\beta^{\otimes N}|M_N)}{N}\le\frac{2}{(\lambda_{\LSI}^N)^2}\frac{\mathcal{I}(\rho_\beta^{\otimes N}|M_N)}{N}\le \frac{4}{(\lambda^\infty)^2} \frac{\mathcal{I}(\rho_\beta^{\otimes N}|M_N)}{N}.
$$
The result now follows from bounding the Fisher information using~\cref{lem:fisher}.
\end{proof}

\section{Proof of \cref{thm:uniform}}\label{sec:uniform}
We start by revisiting the classical propagation of chaos results~\cite{sznitman1991topics,malrieu2003convergence} by using a convexity approach based on the 2-Wasserstein distance.

\begin{theorem}\label{thm:shortime}
Under \cref{A1,A2}, if $K_V+K_W(1-1/N)\ne 0$, then
\begin{equation}
\overline{d}_2(\rho^N(t),\rho^{\otimes N}(t))\le\frac{1-e^{-\frac{K_V+K_W(1-1/N)}{2}t}}{K_V+K_W(1-1/N)}\left(\frac{\sup_{s\in[0,t]}\left(\int_\Omega |\nabla_1 W|^2\star\rho(s)\rho(s)\;\dx{x}\right)^{1/2}}{N^{1/2}}\right)\label{desired1},
\end{equation}
else if $K_V+K_W(1-1/N)= 0$, then
\begin{equation}
\overline{d}_2(\rho^N(t),\rho^{\otimes N}(t))\le \frac{t}{2}\left(\frac{\sup_{s\in[0,t]}\left(\int_\Omega |\nabla_1 W|^2\star\rho(s)\rho(s)\;\dx{x}\right)^{1/2}}{N^{1/2}}\right)\label{desired2},
\end{equation}
\end{theorem}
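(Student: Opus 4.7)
The plan is to establish both inequalities by a Sznitman--Malrieu synchronous-coupling argument, working at the level of the squared coupling cost and then converting the resulting differential inequality into the stated bounds via Gr\"onwall.

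First I will construct, on a common probability space, $N$ independent copies $Y^1_t,\dots, Y^N_t$ of the McKean SDE with common time-$t$ law $\rho(t)$, driven by the \emph{same} Brownian motions $B^i$ as in~\eqref{eq:particle system} and initialized at $Y^i_0 = X^i_0$. Setting $Z^i_t := X^i_t - Y^i_t$, the synchronization eliminates the stochastic integral in It\^o's formula for $|Z^i_t|^2$, leaving
\begin{align*}
\tfrac{d}{dt}\mathbb{E}|Z^i_t|^2 = &-2\,\mathbb{E}\bigl[Z^i\cdot(\nabla V(X^i)-\nabla V(Y^i))\bigr]\\
&-\tfrac{2}{N}\sum_j \mathbb{E}\bigl[Z^i\cdot(\nabla_1 W(X^i, X^j)-\nabla_1 W(Y^i, Y^j))\bigr]\\
&-2\,\mathbb{E}\bigl[Z^i\cdot\xi^i\bigr],
\end{align*}
where $\xi^i := \tfrac{1}{N}\sum_j \nabla_1 W(Y^i, Y^j) - \nabla_1 W\star\rho(t)(Y^i)$ is the fluctuation of the empirical interaction about its mean. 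Since $\bar d_2^2(\rho^N(t),\rho^{\otimes N}(t))\le c(t)^2 := \tfrac{1}{N}\sum_i \mathbb{E}|Z^i_t|^2$ by the very definition of the scaled $2$-Wasserstein distance, it will suffice to control $c(t)$.

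Next I will exploit the convexity structure from \cref{A1,A2}. The $K_V$-convexity of $V$ immediately gives the pointwise bound $\mathbb{E}[Z^i\cdot(\nabla V(X^i)-\nabla V(Y^i))]\ge K_V\,\mathbb{E}|Z^i|^2$. For the pair-interaction term I use the $K_W$-convexity of $W$ on $\R^{2d}$, together with the symmetry $W(x,y)=W(y,x)$ and the identity $\nabla_1 W(x,x)=0$ (a consequence of $W(x,x)=0$), to symmetrize the double sum over $(i,j)$; the vanishing diagonal contributes nothing and the symmetrization produces the factor $K_W(1-1/N)$. The error is controlled by a variance calculation: conditional on $Y^i$, the random variables $\nabla_1 W(Y^i, Y^j)$ with $j\ne i$ are i.i.d.\ with conditional mean $\nabla_1 W\star\rho(t)(Y^i)$, so
\[\mathbb{E}|\xi^i|^2 \le \tfrac{1}{N}\int_\Omega |\nabla_1 W|^2\star\rho(t)\,\rho(t)\dx{x}.\]
Assembling these pieces with Cauchy--Schwarz yields the differential inequality
\[\tfrac{d}{dt} c(t)^2 \le -2\bigl(K_V + K_W(1-1/N)\bigr) c(t)^2 + 2\, c(t)\, N^{-1/2}\Bigl(\int_\Omega |\nabla_1 W|^2\star\rho(t)\,\rho(t)\dx{x}\Bigr)^{1/2}.\]

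Finally, dividing through by $2c(t)$ (handling the set $\{c=0\}$ by a standard continuity argument) linearizes the problem, producing a first-order ODE inequality for $c(t)$ with initial condition $c(0)=0$. Gr\"onwall's lemma with the appropriate exponential integrating factor then yields~\eqref{desired1}; the degenerate case $K_V+K_W(1-1/N)=0$ follows either by integrating the undamped linear ODE directly or by passing to the limit in~\eqref{desired1} via L'H\^opital, producing~\eqref{desired2}. The main technical hurdle is the sharp variance bound $\mathbb{E}|\xi^i|^2\lesssim N^{-1}$; this rests decisively on exchangeability of the $(Y^i)$ and the vanishing diagonal $W(x,x)=0$, and is precisely what delivers the $N^{-1/2}$ propagation-of-chaos rate.
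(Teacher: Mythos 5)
Your argument is correct in substance, but it follows a genuinely different route from the paper. You run the classical Sznitman--Malrieu synchronous coupling at the SDE level: independent McKean copies driven by the same Brownian motions, It\^o on $|X^i_t-Y^i_t|^2$, pointwise monotonicity from the $K_V$-, $K_W$-convexity, and the conditional-variance bound on the empirical interaction. The paper instead never leaves the PDE/gradient-flow level: it differentiates $\overline d_2^2(\rho^N(t),\rho^{\otimes N}(t))$ along the two Wasserstein gradient flows (via \cite[Theorem 8.4.7]{ambrosio2008gradient}), discards the diffusion contribution using displacement convexity of the entropy, and then applies the same convexity of the potentials plus Cauchy--Schwarz against the optimal plan; the subsequent term-by-term expansion of the interaction error (symmetrisation, vanishing diagonal $W(x,x)=0$, Jensen, yielding the $N^{-1}$ bound) is essentially identical to your variance computation, including the factor $K_W(1-1/N)$. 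Two remarks. First, constants: your Gr\"onwall step yields $\frac{1-e^{-Kt}}{K}$ (resp.\ $t$ in the degenerate case) with $K=K_V+K_W(1-1/N)$, whereas the statement has $\frac{1-e^{-Kt/2}}{K}$ (resp.\ $t/2$); this factor-of-two discrepancy traces back to whether one differentiates $\tfrac12 d_2^2$ or $d_2^2$ in the Wasserstein derivative formula, and is immaterial for the only use of the result, in \cref{thm:uniform}, where the bound enters up to constants through the rate $N^{-1/2}$ and the exponential growth rate. Second, regularity: the coupling proof needs $\nabla V$ and $\nabla_1 W$ pointwise, the monotonicity inequalities in classical form, and well-posedness of both SDEs, so it operates at slightly higher regularity than the bare lower-semicontinuous, semiconvex \cref{A1,A2}; the paper's subdifferential/Otto-calculus route is chosen precisely to cover that low-regularity setting (and, as remarked after \cref{thm:uniform}, to extend to manifolds with Ricci lower bounds), while your route is more elementary and probabilistic. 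With that caveat, each of your steps --- the coupling bound on $\overline d_2$, the symmetrisation with vanishing diagonal (which indeed gives $\nabla_1 W(x,x)=0$ once combined with the symmetry of $W$), the $N^{-1}$ variance estimate, Cauchy--Schwarz, and Gr\"onwall from $c(0)=0$ --- is sound.
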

\begin{proof}[Proof of \cref{thm:shortime}]
Using \cite[Theorem 8.4.7]{ambrosio2008gradient}, we differentiate the 2-Wasserstein distance between $\rho^N$ and  $\rho^{\otimes N}$ along their respective flows \eqref{eq:forwardK} and \eqref{McKean}, to obtain
\begin{align}
\frac{\dx{}}{\dx{t}}\overline{d}_2^2(\rho^N,\rho^{\otimes N})&=-\frac{1}{N}\int_{\Omega^N\times \Omega^N} (x-y)\cdot \bigg(\nabla_x\left(\beta^{-1}\log\rho^N(x)+ H_N(x)\right)\\
&\qquad-\nabla_y\left(\sum_{i=1}^N\beta^{-1}\log\rho(y_i)+V(y_i)+ W\star\rho(y_i)\right)\bigg)\;\dx{\Pi}(x,y)\label{A}
\end{align}
where $\Pi\in\mathcal{P}(\Omega^N\times\Omega^N)$ denotes the optimal transport plan between $\rho^N$ and $\rho^{\otimes N}$. The convexity along 2-Wasserstein geodesics of the entropy functional as discussed in~\cite{mccann1997convexity} implies that (cf. \cite[Section 10.1.1]{ambrosio2008gradient})
\begin{equation}
\int_{\Omega^N\times \Omega^N} (x-y)\cdot (\beta^{-1}\nabla_x\log\rho^N(x)-\beta^{-1}\nabla_y\log\rho^{\otimes N}(y))\;\dx{\Pi}(x,y)\ge 0,\label{B}    
\end{equation}
where we have used that $\nabla\log\rho^N$ and $\nabla\log\rho^{\otimes N}$ are in the sub-differential of the entropy at $\rho^N$ and $\rho^{\otimes N}$, respectively (cf. \cite[Theorem 10.4.6]{ambrosio2008gradient}).

Applying inequality \eqref{B} to \eqref{A}, we obtain
\begin{align}
\frac{\dx{}}{\dx{t}}\overline{d}_2^2(\rho^N,\rho^{\otimes N})&\le-\frac{1}{N}\int_{\Omega^N\times \Omega^N} (x-y)\cdot \bigg(\nabla_xH_N(x)-\nabla_yH_N(y)\\
&\qquad\qquad+\nabla_y\left(\frac{1}{2N} \sum_{i,j=1}^NW(y_i,y_j)-\sum_{i=1}^NW\star\rho(y_i)\right)\bigg)\;\dx{\Pi}(x,y)\\
&\le -(K_V+K_W(1-1/N))\underbrace{\frac{1}{N}\int_{\Omega^N\times \Omega^N} |x-y|^2\;\dx{\Pi}(x,y)}_{=\overline{d}_2^2(\rho^N,\rho^{\otimes N})}\\
&\qquad-\underbrace{\frac{1}{N}\int_{\Omega^N\times \Omega^N}(x-y)\cdot \nabla_y \bigg(\frac{1}{2N}\sum_{i=1}^N\sum_{j=1}^N W(y_i,y_j)-\sum_{i=1}^N W\star\rho(y_i)\bigg)\;\dx{\Pi}(x,y)}_{\mathcal{R}},\label{B1}
\end{align}
where the last inequality follows from the convexity hypothesis on the potentials (cf. \cref{A1,A2}). To estimate the second term $\mathcal{R}$, we employ Cauchy-Schwarz inequality and use the fact that $\Pi$ is the optimal transference plan to obtain
\begin{align}
\mathcal{R} \le \overline{d}_2(\rho^N,\rho^{\otimes N})\left(\underbrace{\frac{1}{N}\int_{\Omega^N}\left|\nabla \left(\frac{1}{2N}\sum_{i=1}^N\sum_{j=1}^N W(y_i,y_j)-\sum_{i=1}^N W\star\rho(y_i)\right)\right|^2\;\dx{\rho}^{\otimes N}}_{I} \right)^{1/2}\label{C}   
\end{align}
Expanding the square, using the symmetry of the underlying system and the fact that $W(y,y)=0$, we obtain 
\begin{align}
I&=\int_{\Omega^N}\left|\frac{1}{N}\sum_{j=2}^N\nabla_1 W(y_1,y_j)-\nabla_1 W\star\rho(y_1)\right|^2\;\dx{\rho}^{\otimes N}\\    
&=\int_{\Omega^N}\underbrace{\frac{1}{N^2}\sum_{j=2}^N\sum_{k=2}^N\nabla_1 W(y_1,y_j)\nabla_1 W(y_1,y_k)}_{A}-\underbrace{\frac{2}{N}\sum_{j=2}^N\nabla_1 W(y_1,y_j)\nabla_1W\star\rho(y_1) }_{B}\\
&\qquad\qquad+(\nabla_1 W\star\rho(y_1))^2\;\dx{\rho}^{\otimes N}.
\end{align}
Going term by term, we have
$$
A=\frac{(N-1)(N-2)}{N^2}\int_\Omega (\nabla_1 W\star\rho(y_1))^2\;\dx{\rho}(y_1)+\frac{(N-1)}{N^2}\int_\Omega |\nabla_1 W|^2\star\rho(y_1)\;\dx{\rho}(y_1),
$$
$$
B=-2\frac{N-1}{N}\int_\Omega (\nabla_1 W\star\rho(y_1))^2\;\dx{\rho}(y_1).
$$
Using these identities, we are left with
\begin{align}
I&=\frac{1}{N}\left(1-\frac{1}{N}\right)\int_\Omega |\nabla_1 W|^2\star\rho(y_1)-(\nabla_1 W\star\rho(y_1))^2\;\dx{\rho}(y_1)+\frac{1}{N^2}\int_{\Omega}(\nabla_1 W\star\rho(y_1))^2\;\dx{\rho}(y_1)\\
&\le \frac{1}{N} \int_\Omega |\nabla_1 W|^2\star\rho(y_1)\;\dx{\rho}(y_1),
\end{align}
where the last inequality follows from Jensen's inequality. Replacing the previous equation in \eqref{C}, combined with \eqref{B1}, we obtain
\begin{align}
\frac{\dx{}}{\dx{t}}\overline{d}_2^2(\rho^N,\rho^{\otimes N})\le& -(K_V+K_W(1-1/N)) \overline{d}_2^2(\rho^N,\rho^{\otimes N})\\&+\overline{d}_2(\rho^N,\rho^{\otimes N})\frac{\left(\int_\Omega |\nabla_1 W|^2\star\rho \,\rho\;\dx{x}\right)^{1/2}}{N^{1/2}}.
\end{align}
The estimates \eqref{desired1} and \eqref{desired2} now follow from Gr\"onwall's inequality.
\end{proof}
We are now ready to prove \cref{thm:uniform}.

\begin{proof}[Proof of \cref{thm:uniform}]
Using the uniform integrability assumption on the gradients of $W$ \eqref{hypintegrability}, the estimates of \cref{thm:shortime} simplify to
\begin{equation}
\overline{d}_2(\rho^N(t),\rho^{\otimes N}(t))\le \frac{C}{N^{1/2}}
\begin{cases}
\frac{1-e^{t(K_V+K_W(1-1/N))/2}}{K_V+K_W(1-1/N)}&\mbox{if $K_V+K_W(1-1/N)\ge 0$}\\
\frac{t}{2}&\mbox{if $K_V+K_W(1-1/N)= 0$}\\
\frac{e^{t|K_V+K_W(1-1/N)|/2}-1}{|K_V+K_W(1-1/N)|}&\mbox{if $K_V+K_W(1-1/N)\le 0$}
\end{cases}\label{shortime}
\end{equation}

Next, we derive a competing estimate by employing the triangle inequality and the long time behavior of the flows. More specifically,
\begin{align}
\overline{d}_2(\rho^N(t),\rho^{\otimes N}(t))\le & \;\overline{d}_2(\rho^N(t),M_N)+\overline{d}_2(M_N,\rho_\beta^{\otimes N})+\overline{d}_2(\rho_\beta^{\otimes N},\rho^{\otimes N}(t)) \\
=&\; \overline{d}_2(\rho^N(t),M_N)+\overline{d}_2(M_N,\rho_\beta^{\otimes N})+d_2(\rho_\beta,\rho(t)) \, .
\label{3triangle}
\end{align}
For the first term, we use the Talagrand inequality \eqref{talagrand}
$$
\overline{d}_2(\rho^N(t),M_N)\le \left(\frac{2}{\lambda_{\LSI}^N}\right)^{1/2}\overline{\mathcal{E}}^{1/2}(\rho^N(t)|M_N).
$$
By the log Sobolev inequality we obtain exponential contraction of the relative entropy %we obtain for $N$ large enough
\begin{equation}\label{firsterm}
\overline{d}_2(\rho^N(t),M_N)\le e^{-\frac{\lambda_{\LSI}^N}{2} t}\left(\frac{2}{\lambda_{\LSI}^N}\right)^{1/2}\overline{\mathcal{E}}^{1/2}(\rho_{\mathrm{in}}^{\otimes N}(t)|M_N) \le Ce^{-\frac{\lambda^\infty}{4} t},
\end{equation}
where in the last equality we have used the hypothesis $\rho_{\mathrm{in}}$ has finite energy and that the log Sobolev constant does not degenerate.

\noindent For the second term, we use Lemma \ref{lem:rightscaling} to obtain
\begin{equation}\label{secondterm}
\overline{d}_2(M_N,\rho_\beta^{\otimes N})\le \frac{C}{\sqrt{N}}.
\end{equation}

\noindent For the third term, we use the limiting Talgrand inequality and the limiting log Sobolev inequality to obtain the exponential contraction estimate
\begin{align}
d_2(\rho(t),\rho_\beta)&\le
\left(\frac{2}{\lambda_\infty}\right)^{1/2}(E^{MF}[\rho(t)]-E^{MF}[\rho_\beta])^{1/2}\nonumber \\
&\le \left(\frac{2}{\lambda_\infty}\right)^{1/2}e^{-\frac{\lambda_\infty}{2} t}(E^{MF}[\rho_{\mathrm{in}}]-E^{MF}[\rho_\beta])^{1/2}\nonumber\\
&\le Ce^{-\frac{\lambda_\infty}{2} t}.\label{thirdterm}
\end{align}
Combining \eqref{3triangle} with \eqref{firsterm}, \eqref{secondterm} and \eqref{thirdterm} we obtain the estimate
\begin{equation}\label{2}
    \overline{d}_2(\rho^N(t),\rho^{\otimes N}(t))\le C\left(e^{-\frac{\lambda_\infty}{4}t}+\frac{1}{\sqrt{N}}\right).
\end{equation}

The result now follows from interpolating the estimates \eqref{shortime} and \eqref{2}. In the case, $K_V+K_W(1-1/N)>0$ the desired estimate follows directly from \eqref{shortime}. For $K_-:=K_V+K_W(1-1/N)<0$, we consider the distinguished time scale $T_N:= \log N^\gamma$, for some $\gamma>0$ to be chosen in terms of $K_-$. Applying~\eqref{2}, we obtain
\begin{align}
\overline{d}_2(\rho^N(t),\rho^{\otimes N}(t)) \leq C\left(N^{-\gamma\frac{\lambda_\infty}{4}}+N^{-\frac{1}{2}}\right)\qquad\mbox{for $t>T_N$}.
\end{align}
For $t < T_N$, we apply~\eqref{shortime}1 to obtain
\begin{align}
\overline{d}_2(\rho^N(t),\rho^{\otimes N}(t)) \leq CN^{-\frac{1+\gamma K_-}{2}} \, .
\end{align}
Choosing 
$$
\gamma=\left(\frac{\lambda_\infty}{2}-K_-\right)^{-1},
$$ 
we obtain that for every $t\in(0,\infty)$ 
$$
\overline{d}_2(\rho^N(t),\rho^{\otimes N}(t))\le \frac{C}{N^\theta}
$$
is satisfied with
$$
\theta=\frac{1}{2}\frac{\lambda_\infty}{\lambda_\infty-2K_-}. 
$$
\end{proof}

\section{Proof of~\cref{thm:twoscale}}\label{sec:twoscale}

As mentioned earlier, our proof of~\cref{thm:twoscale} will rely on the two-scale approach to log Sobolev inequalities introduced in~\cite{OR07} and discussed further in~\cite{GOVW09}, see also~\cite{Lel09}. Before we introduce the main result of~\cite{OR07}, we introduce some preliminary notions. 
\begin{definition}[Conditional measures]
Given a probability measure  $\mu_N \in \cP(\Omega^N) $ we define the conditional measure $\mu_{N,i}, i \in{1,\dots,N}$ as the family of measures indexed by $x_j,\, j \neq i$ such that for all $\varphi \in C_b(\Omega^N)$
\begin{align}
\int_{\Omega^N}\varphi \dx{\mu_N} = \int_{\Omega^{N-1}} \int_{\Omega} \varphi \dx{\mu_{N,i}} \dx{\mu_{N\setminus\set{i}}} \, ,
\end{align} 
where $\mu_{N\setminus\set{i}}$ is the marginal of $\mu_N$ obtained by integrating out $x_i \in \Omega$.\label{def:cond}
\end{definition}

We can now the state the result of interest.
\begin{theorem}[{\cite[Theorem 1]{OR07}}]
Let $\Omega$ be a smooth, connected, and complete Riemannian manifold and assume that the measure $\mu_N$ has a Gibbs structure, that is to say
\begin{align}
\mu_N(\dx{x})= Z_{N}^{-1}e^{-\beta H_N} \dx{x} \, ,
\end{align}
where $\dx{x}$ is the Riemannian volume measure and $H_N: \Omega^N \to \R$ is some smooth Hamiltonian. Assume there exists some constants $\kappa_{ij}$, such that for all $i \neq j$
\begin{align}
\norm{D^2_{x_i x_j} H_N} \leq \kappa_{ij} \qquad\mbox{for all $x \in \Omega ^N$} ,
\end{align}
where $\norm{\cdot}$ is the operator norm of $D^2_{x_ix_j}H_N$. Furthermore, assume that the conditional measures $\mu_{N,i}$ satisfy a log Sobolev inequality with uniform constant $\lambda_{\LSI}^{N,i}$ for all $\hat{x} \in \Omega^{N-1}$. Consider the matrix $A \in \R^{N \times N}$ with entries $A_{ii}=\lambda^{N,i}_{\LSI}$ and $A_{ij}=-\beta \kappa_{ij}$; if 
\begin{align}
A \geq C I^{N\times N} \, ,
\end{align}
the measure $\mu_N$ satisfies a log Sobolev inequality with constant $C$.
\label{thm:or}
\end{theorem}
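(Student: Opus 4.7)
The plan is to apply the two-scale Otto--Reznikoff criterion (\cref{thm:or}) to the Gibbs measure $M_N$ (in case (a)) or $\tilde M_N$ (in case (b)). This requires verifying three ingredients: (i) a uniform bound on the off-diagonal Hessian blocks of the Hamiltonian, (ii) a uniform-in-$N$ LSI for every one-dimensional conditional measure $\mu_{N,i}$ in the sense of \cref{def:cond}, and (iii) strict positivity of the resulting Otto--Reznikoff matrix $A$.

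For (i), since the interaction in the Hamiltonian carries the mean-field prefactor $1/N$, one computes $D^2_{x_ix_j}H_N = \frac{1}{N}D^2_{x_ix_j}W$ for $i \ne j$, so the hypothesis $\|D^2_{xy}W\|_{L^\infty} \le C$ immediately yields $\kappa_{ij} = C/N$ (respectively $\kappa_{ij}=\epsilon C/N$ in case (b)). For (ii) the crucial observation is that the conditional density of $x_i$ given the remaining coordinates $\hat x \in \Omega^{N-1}$ is proportional to
\begin{equation*}
\exp\Bigl(-\beta V(x_i) - \tfrac{\beta}{N}\sum_{j \ne i}W(x_i,x_j)\Bigr),
\end{equation*}
(with an extra factor $\epsilon$ on the interaction in case (b)). Because $\|W\|_{L^\infty} \le C$, the conditioning perturbation $\tfrac{\beta}{N}\sum_{j\ne i}W(x_i,x_j)$ has $L^\infty$-oscillation bounded by $2\beta C$ \emph{uniformly in $N$ and in $\hat x$}. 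The Holley--Stroock perturbation principle applied to a suitable reference measure then gives a uniform lower bound on $\lambda_{\LSI}^{N,i}$: in case (a) one takes the reference to be normalized Lebesgue on $\Omega$ (further absorbing the bounded $V$ into the perturbation, using that $V$ is bounded on the compact $\Omega$ by \cref{A1}), yielding $\lambda_{\LSI}^{N,i} \ge \lambda_0 e^{-2\beta(C + \mathrm{osc}\,V)}$; in case (b) one takes $Z_V^{-1}e^{-\beta V}\dx x$ as reference and obtains $\lambda_{\LSI}^{N,i} \ge \lambda^V_{\LSI} e^{-2\beta\epsilon C}$ (modulo an absorbable $\beta$-dependent rescaling between $e^{-V}$ and $e^{-\beta V}$).

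For (iii), the Otto--Reznikoff matrix $A$ has diagonal entries at least the conditional LSI constant from (ii) and off-diagonal entries $-\beta\kappa_{ij}$. Gershgorin's theorem then gives
\begin{equation*}
A \ge \Bigl(\min_i \lambda_{\LSI}^{N,i} - \beta \tfrac{N-1}{N}C \Bigr)I^{N \times N}
\end{equation*}
in case (a) and the analogous bound $\bigl(\lambda^V_{\LSI}e^{-2\beta\epsilon C} - \beta\epsilon C\bigr)I^{N\times N}$ in case (b). In both cases the right-hand side is independent of $N$ and strictly positive as soon as $\beta$ (respectively $\epsilon$) is taken below an explicit threshold $\beta_{\LSI}$ (resp. $\epsilon_{\LSI}$) depending on $C$ and on the reference LSI constant (and on $\beta$ in case (b)). Applying \cref{thm:or} then produces a uniform-in-$N$ LSI for $M_N$ (respectively $\tilde M_N$), which is exactly the claim $\liminf_{N\to\infty}\lambda_{\LSI}^N > 0$.

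The main obstacle is step (ii): one must ensure that the Holley--Stroock bound is truly uniform in the frozen coordinates $\hat x \in \Omega^{N-1}$ and in $N$. The key reason this succeeds is the mean-field scaling $1/N$ in front of the interaction, which turns the sum $\sum_{j \ne i}W(x_i,x_j)$ of up to $N-1$ bounded terms into an $O(1)$ perturbation; without the boundedness of $W$, this step would fail, which is why the $L^\infty$ assumptions on $W$ and $D^2_{xy}W$ are essential throughout.
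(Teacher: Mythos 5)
There is a fundamental mismatch here: the statement you were asked to prove is \cref{thm:or} itself, i.e.\ the Otto--Reznikoff two-scale criterion (quoted from [OR07, Theorem 1]), but your argument never proves it --- it \emph{invokes} it. Everything you write (the Hessian computation $D^2_{x_ix_j}H_N=\tfrac1N D^2_{x_ix_j}W$, the Holley--Stroock bound for the conditional measures $\mu_{N,i}$, and the diagonal-dominance estimate for the matrix $A$) is the verification of the \emph{hypotheses} of \cref{thm:or} in the mean-field setting; this is in substance the paper's proof of \cref{thm:twoscale}, not a proof of \cref{thm:or}. As an argument for the stated theorem it is circular: you use the conclusion ``$A\ge CI$ implies $\mu_N$ satisfies an LSI with constant $C$'' as a black box, which is precisely what had to be established.

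A genuine proof of \cref{thm:or} must derive the LSI for $\mu_N$ from the conditional LSIs and the off-diagonal Hessian bounds. The standard route (and the one in [OR07]) is a two-scale/hierarchical decomposition of the entropy: for a density $f$ with respect to $\mu_N$ one writes $\Ent_{\mu_N}(f)$ as the sum of conditional entropies $\Ent_{\mu_{N,i}}(f)$ plus the entropy of a marginal, applies the assumed LSI to each conditional measure, and then controls the error terms --- gradients of conditional expectations, or equivalently covariances between coordinate blocks --- using the bounds $\norm{D^2_{x_ix_j}H_N}\le\kappa_{ij}$; iterating (or organising the resulting quadratic form) is exactly what produces the matrix condition $A\ge CI^{N\times N}$ and the constant $C$ in the conclusion. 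None of these steps appears in your proposal, so the statement in question remains unproved. (For what it is worth, the paper does not reprove \cref{thm:or} either; it cites [OR07] and then carries out the verification you wrote as its proof of \cref{thm:twoscale}, so your text would be a reasonable answer to \emph{that} theorem, but not to this one.)
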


Relying on \cref{thm:or}, we present now the proof of~\cref{thm:twoscale}.
\begin{proof}[Proof of~\cref{thm:twoscale}]
We note first that from the \cref{def:cond}, the conditional measure  $M_{N,i}$ of $M_N$ can be expressed as 
\begin{align}
M_{N,i}(\dx{x_i})=& M_N(\dx{x_i}| x_1, \dots ,x_{i-1} ,x_{i+1},\dots, x_N)=\frac{M_N}{(M_N)_{N\setminus\set{i}}} \, ,
\end{align}
where $(M_N)_{N\setminus\set{i}}$ is the marginal of $M_N$ obtained by integrating out $x_i$. We are thus left with
\begin{align}
M_{N,i} =& \frac{ \exp\bra*{-\beta (V(x_i) + \frac{1}{N}\sum_{j=1}^N W(x_i,x_j) ) -\beta (\sum_{j\neq i ,j=1}^N V(x_j)  +\frac{1}{2N}\sum_{j,k=1,j,k\neq i} W(x_j,x_k))}}{\int_{\Omega} \exp\bra*{-\beta H_N} \dx{x_i}} \\
=&Z_{N,i}^{-1}  \exp\bra*{-\beta (V(x_i) + \frac{1}{N}\sum_{j=1}^N W(x_i,x_j) )} \, ,
\end{align}
where
\begin{align}
Z_{N,i}=\int_{\Omega} \exp\bra*{-\beta (V(x_i) + \frac{1}{N}\sum_{j=1}^N W(x_i,x_j) )} \dx{x_i} \, .
\end{align}
We now assert that the conditional measure $M_{N,i}$ satisfies a log Sobolev inequality. We first treat the case in which $\Omega$ is compact, that is to say~\cref{thm:twoscale}~\ref{twoscalea}. By the Holley--Stroock perturbation Theorem~\cite[Proposition 5.1.6]{bakry2013analysis}, we have that 
\begin{align}
\lambda^{N,i}_{\LSI}\geq  e^{- 2 \beta  (\norm{W}_{\Leb^\infty(\Omega^2)} + \norm{V}_{\Leb^\infty(\Omega^)})} \lambda_{\LSI}^\Omega \, ,
\end{align}
for all $x_j, j\neq i, j=1,\dots,N$ and where $\lambda_{\LSI}^\Omega$ is the optimal log Sobolev constant of the Lebesgue measure on $\Omega$. 

 Note that because of the exchangeability of the underlying particle system,  we have that $\lambda^{N,i}_{\LSI}=\lambda^{N,j}_{\LSI}$ for all $i,j = 1,\dots,N$. 

Note now that
\begin{align}
D^2_{x_i x_j}H_N(x_1,\dots,x_N) = \frac{1}{N}D^2_{x_i x_j} W(x_i,x_j) \, . 
\end{align}
Using the hypothesis that $W\in W^{2,\infty}(\Omega^2)$, we can bound
\begin{align}
\norm{D^2_{x_i x_j}H_N}_{\Leb^\infty(\Omega^N)} \leq \frac{1}{N} \norm{D^2_{x y} W}_{\Leb^\infty(\Omega^2)} =: \kappa_{ij} \, ,
\end{align} 
for all $i,j=1,\dots,N$. We will show that the matrix $A \in \R^{N \times N}$ from \cref{thm:or} is positive definite, by showing that it is diagonally dominant. In fact, for $\beta$ sufficiently small we have that
\begin{align}
   A_{ii}-\sum_{j\ne i}|A_{ij}|\ge e^{- 2 \beta  (\norm{W}_{\Leb^\infty(\Omega^2)} + \norm{V}_{\Leb^\infty(\Omega^)})} \lambda_{\LSI}^\Omega-\beta \frac{N-1}{N}   \norm{D^2_{x y} W}_{\Leb^\infty(\Omega^2)} >c >0\, ,
\end{align}
holds true for all $N$ with the constant $c$ independent of $N$. Applying~\cref{thm:or},  \cref{thm:twoscale}~\ref{twoscalea} now follows. 

For the proof of~\cref{thm:twoscale}~\ref{twoscaleb} we can apply essentially the same perturbative argument as before but now around the measure $Z_{V}^{-1}e^{-V} \dx{x}$. For $\epsilon$ sufficiently small, we obtain that the analogous bound
\begin{align}
  A_{ii}-\sum_{j\ne i}|A_{ij}|\ge e^{- 2\epsilon \beta  \norm{W}_{\Leb^\infty(\Omega^2)} } \lambda_{\LSI}^V-\epsilon \beta \frac{N-1}{N}   \norm{D^2_{x y} W}_{\Leb^\infty(\Omega^2)} >c >0\, 
\end{align}
holds true for all $N$ with the constant $c$ independent of $N$.
\end{proof}

\section{Proof of \cref{thm:fluctuations}}\label{sec:fluctuations}
To simplify the computations in this section, we will take the following definition of the negative Sobolev norm $H^{-s}_0(\T^d)$ of mean-zero distributions is given by

\begin{equation}
\|h\|_{H^{-s}(\T^d)}^2:=\sum_{j\in \N}|\langle h, \phi_j \rangle|^2,
\label{eq:snorm}
\end{equation}
where $(\phi_j)_{j\in\N}$ is a given a smooth orthonormal basis for the Sobolev space $H^{s}_0(\T^d)$.

We remark that Theorem~\ref{thm:fluctuations} can also be proved when $\Omega  = \R^d$, under appropriate assumptions on the confining and interaction potentials. In particular, we can construct an appropriate orthonormal basis using the eigenfunctions of the linearised McKean-Vlasov operator with a weighted inner-product. Conditions on the growth of the confining potential so that the Sobolev embedding theorems needed in the proof in the appropriate weighted spaces are given in~\cite{Lorenzi}.

\begin{lemma}[Law of large numbers] Assume that $\liminf_{N\to\infty}\lambda^N_{\LSI}>0$. %and $\T^d$ is compact. 
Let $\mu^{(N)}$ be the empirical measure associated to the $N$-particle Gibbs measure $M_N \in \cP_{\mathrm{sym}}((\T^d)^N)$. Then, for any $s>\frac{d+2}{2}$, there exists $C>0$ such that
\[
\mathbb{E}\pra*{\|\mu^{(N)}-\rho_\beta\|_{H^{-s}(\T^d)}^2}\le \frac{C}{N} \, ,
\]
where $\rho_\beta \in \cP(\T^d)$ is the unique critical point of the mean field energy $E^{MF}$.
\end{lemma}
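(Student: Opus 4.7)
The strategy is to expand the squared $H^{-s}(\T^d)$ norm in a convenient orthonormal basis, reduce matters to controlling each Fourier-type coefficient by a variance plus a bias, handle the variance via the Poincaré inequality that follows from the uniform LSI, and handle the bias by the quantitative convergence $(M_N)_1 \to \rho_\beta$ supplied by \cref{thm:uniquelimit}. Crucially, since $\liminf_N \lambda^N_{\LSI}>0$ implies $\lambda^\infty_{\LSI}>0$ (by \cref{thm:degeneracy}), the mean field energy $E^{MF}$ has no non-minimising critical points, so by \cref{thm:uniquelimit} it admits a unique critical point $\rho_\beta$, making the statement well-posed.

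I would fix the orthonormal basis $\{\phi_k\}_{k\in \Z^d\setminus\{0\}}$ of mean-zero functions in $H^{s}(\T^d)$ given by a suitable normalisation of the Fourier modes, so that $\|\phi_k\|_\infty \lesssim (1+|k|^2)^{-s/2}$ and $\|\nabla \phi_k\|_\infty \lesssim |k|(1+|k|^2)^{-s/2}$. Since $\int \phi_k \,\dx \mu^{(N)} = \frac{1}{N}\sum_i \phi_k(X_i)$ and $\int \phi_k \,\dx \rho_\beta$ is a deterministic number, the problem reduces to bounding, for each $k$, the quantity $\mathbb{E}_{M_N}|\Psi_k|^2$ where
\[
\Psi_k(x_1,\dots,x_N) := \frac{1}{N}\sum_{i=1}^N \phi_k(x_i) - \int_{\T^d}\phi_k\,\dx \rho_\beta \, .
\]
Writing $\mathbb{E}_{M_N}|\Psi_k|^2 = \mathrm{Var}_{M_N}(\Psi_k) + (\mathbb{E}_{M_N}[\Psi_k])^2$, I treat the two terms separately.

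For the variance, the uniform LSI for $M_N$ implies the corresponding (unscaled) Poincaré inequality $\mathrm{Var}_{M_N}(f) \le \frac{2}{\beta \lambda^N_{\LSI}}\int |\nabla f|^2 \dx M_N$ for any smooth $f$ on $(\T^d)^N$. Applied to $\Psi_k$ (whose gradient has $|\nabla \Psi_k|^2 = N^{-2}\sum_i |\nabla\phi_k(x_i)|^2$) and using exchangeability, this yields
\[
\mathrm{Var}_{M_N}(\Psi_k) \le \frac{2}{\beta \lambda^N_{\LSI}\, N} \int_{\T^d} |\nabla \phi_k|^2 \dx (M_N)_1 \le \frac{C}{N} \|\nabla \phi_k\|_\infty^2 \, ,
\]
using $\liminf_N \lambda^N_{\LSI}>0$. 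For the bias, $\mathbb{E}_{M_N}[\Psi_k] = \int \phi_k \,\dx (M_N)_1 - \int \phi_k\,\dx \rho_\beta$, which by Lipschitz duality is bounded by $\|\nabla \phi_k\|_\infty \, d_2((M_N)_1,\rho_\beta)$. A short calculation using symmetry of $M_N$ and $\rho_\beta^{\otimes N}$ together with \cref{thm:uniquelimit} gives $d_2^2((M_N)_1,\rho_\beta) \le \overline{d}_2^2(M_N, \rho_\beta^{\otimes N}) \le C/N$, so the bias term is of the same order as the variance.

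Summing over $k$ gives
\[
\mathbb{E}\|\mu^{(N)}-\rho_\beta\|^2_{H^{-s}(\T^d)} \le \frac{C}{N}\sum_{k \in \Z^d\setminus\{0\}} \|\nabla \phi_k\|_\infty^2 \le \frac{C}{N}\sum_{k\ne 0} |k|^2 (1+|k|^2)^{-s} \, ,
\]
and the last series converges precisely under the hypothesis $s>(d+2)/2$, giving the claimed bound. The main conceptual point (and the only non-mechanical step) is recognising that the correct tool to handle the lack of independence between the $X_i$ sampled from $M_N$ is the uniform Poincaré inequality for $M_N$ on $(\T^d)^N$ derived from the uniform LSI, and that this Poincaré inequality, when applied to functions of the form $\frac{1}{N}\sum_i \phi(x_i)$, automatically produces the optimal $1/N$ variance rate; the rest is bookkeeping in Fourier space and an appeal to \cref{thm:uniquelimit} for the bias.
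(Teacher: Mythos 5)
Your proposal is correct, but it takes a genuinely different route from the paper for the fluctuation part of the estimate. The paper inserts the empirical measure $\rho_\beta^{(N)}$ of $N$ i.i.d.\ samples from $\rho_\beta$ as an intermediate object: the distance $\mathbb{E}\|\mu^{(N)}-\rho_\beta^{(N)}\|_{H^{-s}}^2$ is controlled by the isometry $\overline{d}_2=\mathfrak{D}_2$ of \cref{thm:HM} together with the bound $\overline{d}_2(M_N,\rho_\beta^{\otimes N})\le C N^{-1/2}$ of \cref{lem:rightscaling} and the pointwise comparison $d_2\gtrsim \|\cdot\|_{W^{-1,\infty}}\gtrsim\|\cdot\|_{H^{-s}}$ (which is where $s>d/2+1$ enters), while $\mathbb{E}\|\rho_\beta^{(N)}-\rho_\beta\|_{H^{-s}}^2$ is a classical i.i.d.\ law-of-large-numbers computation using the Hilbert--Schmidt embedding $H^s_0\hookrightarrow L^2$. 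You instead stay entirely under $M_N$ and split each basis coefficient into variance plus bias: the variance of the linear statistic is handled by the Poincar\'e inequality inherited from the uniform LSI (with the constant $2/(\beta\lambda^N_{\LSI})$, which is indeed what the paper's normalisation of $\lambda^N_{\LSI}$ gives), and the bias by Lipschitz duality together with $d_2((M_N)_1,\rho_\beta)\le\overline{d}_2(M_N,\rho_\beta^{\otimes N})\le CN^{-1/2}$, which is a correct marginal-coupling inequality. Both arguments therefore rest on the same quantitative input (\cref{lem:rightscaling}, hence ultimately Talagrand plus the Fisher-information bound), but you use it only at the level of the first marginal and extract the $1/N$ fluctuation rate from the LSI a second time via the spectral gap, whereas the paper transfers the whole problem to the i.i.d.\ system and never invokes the Poincar\'e inequality; your summability condition $\sum_{k\neq 0}|k|^{2-2s}<\infty$ and the paper's $W^{-1,\infty}$ comparison produce the same threshold $s>(d+2)/2$. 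The only cosmetic caveats: for the finitely many $N$ before the $\liminf$ bound on $\lambda^N_{\LSI}$ and the bound of \cref{lem:rightscaling} take effect, one absorbs the trivial uniform bound on $\mathbb{E}\|\mu^{(N)}-\rho_\beta\|^2_{H^{-s}}$ into the constant (the paper's proof has the same implicit step), and well-posedness of $\rho_\beta$ follows directly from \cref{thm:uniquelimit} without needing the detour through \cref{thm:degeneracy}.
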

\begin{remark}
The solutions to the linear SPDE \eqref{SPDE1} are supported in $H^{-s}$ with $s>d/2$, see \cref{mild}. Hence, the Law of large numbers does not hold for any $H^{-s}$ with $s< d/2$. 
\end{remark}

\begin{proof}
We consider $\rho_\beta^{(N)}$ the empirical measure associated to $\rho_\beta^{\otimes N} \in \cP_{\mathrm{sym}}((\T^d)^N)$, i.e. $\rho_\beta^{(N)}$ the probability-measure valued random variable defined as 
$$
\rho_\beta^{(N)}=\frac{1}{N}\sum_{i=1}^N\delta_{X_i}\qquad 
\mbox{such that $(X_1,...,X_N)$ are distributed according to $\rho_\beta^{\otimes N}$} \, .
%\mbox{such that} \;\; X_i \sim \rho_\beta, \; i =1, \dots N \, .
$$
By the triangle inequality we have that
\begin{equation}\label{eq:decompose}
    \mathbb{E}\pra*{\|\mu^{(N)}-\rho_\beta\|^2_{H^{-s}(\T^d)}}\le 2\bra*{\mathbb{E}\pra*{\|\mu^{(N)}-\rho_\beta^{(N)}\|^2_{H^{-s}(\T^d)}} + \mathbb{E}\pra*{\|\rho_\beta^{(N)}-\rho_\beta\|^2_{H^{-s}(\T^d)}}}
\end{equation}

We start by controlling the first term $\mathbb{E}\pra*{\|\mu^{(N)}-\rho_\beta^{(N)}\|^2_{H^{-s}(\T^d)}}$. We consider the optimal coupling between $\mu^{(N)}$ and $\rho_\beta^{(N)}$ such that
$$
\mathbb{E}\pra*{d_2^2(\mu^{(N)},\rho_\beta^{(N)})}=\mathfrak{D}_2^2\left(\hat{\mu}^N,\hat{\rho}_\beta^N\right) \, ,
$$
where $\hat{\mu}^N=\mathrm{Law}\left(\mu^{(N)}\right)\in \mathcal{P}(\mathcal{P}(\T^d))$, $\hat{\rho}_\beta^N=\mathrm{Law} \left(\rho_\beta^{(N)}\right)\in \mathcal{P}(\mathcal{P}(\T^d))$, and $\mathfrak{D}_2$ is the 2-Wassertein distance defined on the space of  probability measures of the metric space $(\mathcal{P}(\T^d),d_2)$. Then by the isometry from~\cref{thm:HM}, we obtain
$$
\mathfrak{D}_2^2\left(\hat{\mu}^N,\hat{\rho}_\beta^N\right)=\overline{d}_2^2(M_N,\rho_\beta^{\otimes N})\le \frac{C}{N},
$$
where we have used~\cref{lem:rightscaling} and the fact that $\liminf_{N\to \infty}\lambda^N_{\LSI}>0$ for the last inequality. Next, using the compactness of $\T^d$ and the fact that $s>d/2+1$, we have the pointwise bound 
$$
d_2^2(\mu^{(N)},\rho_\beta^{(N)})\ge c_1 \|\mu^{(N)}-\rho_\beta^{(N)}\|_{W^{-1,\infty}(\T^d)}^2\ge c_2 \|\mu^{(N)}-\rho_\beta^{(N)}\|^2_{H^{-s}(\T^d)} \, ,
$$
for some constants $c_1,c_2>0$ independent of $\mu^{(N)},\rho_\beta^{(N)}$. Combining the previous three expressions, we obtain the first desired bound
\begin{equation}\label{des1}
    \mathbb{E}\pra*{\|\mu^{(N)}-\rho_\beta^{(N)}\|^2_{H^{-s}(\T^d)}}\le \frac{C}{N}.
\end{equation}

Next, we bound $\mathbb{E}\pra*{\|\rho_\beta^{(N)}-\rho_\beta\|^2_{H^{-s}(\T^d)}}$. We take $\{\phi_j\}_{j=0}^\infty$ to be the orthonormal basis of $H^{s}_0(\T^d)$ from~\eqref{eq:snorm}. Expanding the square, we obtain
\begin{align}
\mathbb{E}\pra*{\|\rho_\beta^{(N)}-\rho_\beta\|^2_{H^{-s}(\T^d)}}=&\mathbb{E}\pra*{\sum_{j=1}^\infty\abs*{\langle \rho_\beta^{(N)}-\rho_\beta,\phi_j\rangle}^2}\\ 
=&\sum_{j=1}^\infty \mathbb{E}\pra*{\left|\frac{1}{N}\sum_{i=1}^N\phi_j(X_i)- \int \phi_j\rho_\beta\;\dx{x}\right|^2}\\
=&\frac{1}{N}\sum_{j=1}^\infty\int \phi_j^2\rho_\beta\;\dx{x}-\frac{1}{N}\left(\int \phi_j\rho_\beta\;\dx{x}\right)^2\\
\le&\frac{\|\rho_\beta\|_{L^\infty(\T^d)}}{N}\sum_{j=1}^\infty\|\phi_j\|_{\Leb^2(\T^d)}^2. 
\end{align}
Noticing that $s>d/2$ implies that the embedding $H^{s}_0(\T^d)\to \Leb^2(\T^d)$ is Hilbert-Schmidt, we can use the boundedness of $\rho_\beta$ to obtain
\begin{equation}\label{des2}
    \mathbb{E}\pra*{\|\rho_\beta^{(N)}-\rho_\beta\|^2_{H^{-s}(\T^d)}}\le \frac{C}{N} \, .
\end{equation}
Combining \eqref{eq:decompose} with \eqref{des1} and \eqref{des2}, we obtain the desired conclusion.
\end{proof}
\begin{remark}
An alternative proof of inequality \eqref{des2} can be found in~\cite[Theorem 5.1]{hauray2014kac}.
\end{remark}
We now consider the implications of having uniform control of the log Sobolev constant the fluctuations of the stationary solutions to~\eqref{eq:particle system}, i.e. solutions of~\eqref{eq:particle system} started at stationarity.  To this end, we consider the empirical measure process $t \mapsto \mu^{(N)}(t)$ defined by
\begin{equation}
 \mu^{(N)}(t):=\frac 1 N \sum_{i=1}^{N}\delta_{X_t^i}, \nonumber   
 \end{equation}
where $X_t=(X_t^{1},\dots,X_t^{N})$ is the solution to \eqref{eq:particle system} started from the unique invariant Gibbs measure $M_N$.  Our goal is to analyze the corresponding fluctuation process $t \mapsto \eta^{N}(t)$ defined by
\begin{equation}
\eta^{N}(t):=\sqrt{N}(\mu^{(N)}(t)-\rho_{\beta}),  \nonumber  
\end{equation}
As a direct consequence of the estimates from the previous lemma and using the fact that $\liminf_{N\to\infty}\lambda^{N}_{LS}>0$, we have the uniform bound
\begin{equation}
\sup_{N,t}\mathbb{E}\pra*{\|\eta^N(t)\|_{H^{-s}(\T^d)}^2}<\infty \qquad \mbox{for any $s>d/2+1$.}\label{L2control}
\end{equation}
In the sequel, we will use the above estimate together with the classical martingale method, c.f.~\cite[Chapter 8]{RSMUP_1982__67__171_0}, to establish convergence in law as $N \to \infty$ of $\eta^N$ to the stationary solution $\eta$ of the following linear SPDE
\begin{equation}
 \partial_t\eta=\mathcal{L}_{\rho_\beta}\eta+\nabla\cdot(\sqrt{\rho_\beta}\xi),   \label{SPDE1}
\end{equation}
where $\xi$ is a mean-zero space-time white noise on $\R_{+} \times \T^{d}$ and $\mathcal{L}_{\rho_\beta}$ is the linearisation of the McKean-Vlasov operator \eqref{McKean} around the unique invariant measure $\rho_\beta$ defined by
\begin{equation}\label{e:linearised}
\mathcal{L}_{\rho_\beta}\psi=\beta^{-1}\Delta \psi+\nabla\cdot(\nabla W\star\psi \,\rho_\beta)+\nabla\cdot(\nabla W\star\rho_\beta \,\psi)+\nabla\cdot(\nabla V\,\psi).
\end{equation}
The SPDE \eqref{SPDE1} can be solved using classical methods, the three typical notions of solution being the mild, weak, and martingale formulations.  As is typical, the martingale formulation is the most convenient for identifying the limiting law of a tight subsequence of $\eta^N$, while the mild formulation provides a clearer picture of the uniqueness in law and hence the convergence in law of the full sequence.

\medskip

Denote $U$ as the closed mean-zero subspace of $L^{2}(\T^{d};\T^{d})$ and $H:=H^{-s}(\T^{d})$ for $s>\frac{d}{2}$. Let $(\mathcal{X},\mathcal{F}, (\cF_t)_{t \geq 0},\P)$ be a stochastic basis, i.e a complete filtered probability space with a right continuous filtration endowed with an $\mathcal{F}_t$-adapted $U$-valued cylindrical Wiener process $\textbf{W}$ and let $\eta_{0}$ be an $H$-valued $\cF_0$-measurable random variable independent of $\textbf{W}$.  The mild formulation of \eqref{SPDE1} with initial condition $\eta_{0}$ involves stochastic integration in Hilbert spaces, c.f. \cite[Chapter 4]{RSMUP_1982__67__171_0}, which we quickly review for our specific case below.  Let $\mathbf{L}^{2}_{0}(U;H)$ denote the Hilbert--Schmidt operators from $U $ to $H$ equipped with the standard Hilbert--Schmidt norm $\norm{\cdot}_{\mathbf{L}_2^0(U;H)}$.  Given $T>0$ and $\Phi \in L^{2}([0,T];\mathbf{L}^{2}_{0}(U;H))$, the stochastic integral $[0,T]\ni t\mapsto \int_{0}^{t}\Phi(s)d \textbf{W}(s)$ is well defined as a continuous $\cF_t$-martingale with trajectories in $C([0,T]; H)$.  We remind the reader that the relevance of $\mathbf{L}^{2}_{0}(U;H)$ in this context is that the It\^o isometry takes the following form
\begin{equation}
\E \pra*{\left \| \int_{0}^{t}\Phi(s)d \textbf{W}(s) \right \|_{H}^{2}}=\int_{0}^{t}\| \Phi(s)\|_{\mathbf{L}^{2}_{0}(U;H) }^{2}\dx s \nonumber.
\end{equation}

\medskip

The mild solution to $t \mapsto \eta(t) \in H $ to~\eqref{SPDE1} with initial condition $\eta_{0}$ is  then given by the following stochastic convolution:
\begin{equation}
\eta^\infty(t):=e^{t\mathcal{L}_{\rho_{\beta}}}\eta_{0}+\int_{0}^{t}e^{(t-s)\mathcal{L}_{\rho_{\beta}} } \nabla \cdot \big (\sqrt{\rho_{\beta}}d\textbf{W}(s) \big)\label{eq:Mild}.
\end{equation}

\begin{lemma}\label{mild}
The mild solution \eqref{eq:Mild} is well-defined as a stochastic process with trajectories in $C([0,T];H)$. 
\end{lemma}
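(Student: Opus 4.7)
The plan is to verify the classical Da Prato--Zabczyk hypotheses for the stochastic convolution in~\eqref{eq:Mild}. First, under the smoothness assumptions on $V$ and $W$, \cref{prop:steadystate} gives that $\rho_\beta$ is smooth and strictly positive on $\T^d$, so $\mathcal{L}_{\rho_\beta}$ from~\eqref{e:linearised} is a bounded lower-order perturbation of the self-adjoint operator $\beta^{-1}\Delta$ on the Sobolev scale $H^{-s}(\T^d)$. Standard analytic semigroup perturbation theory then ensures that $\mathcal{L}_{\rho_\beta}$ generates an analytic $C_0$-semigroup $\{e^{t\mathcal{L}_{\rho_\beta}}\}_{t\ge 0}$ on $H=H^{-s}(\T^d)$, with the usual smoothing estimate
\[
\bigl\|e^{r\mathcal{L}_{\rho_\beta}} f\bigr\|_{H^{-s}} \le C\, r^{-\theta/2}\|f\|_{H^{-s-\theta}}, \qquad r>0,\ \theta\in[0,1).
\]
In particular $t\mapsto e^{t\mathcal{L}_{\rho_\beta}}\eta_0$ is continuous into $H$ for any $\eta_0 \in H$, which handles the deterministic summand in~\eqref{eq:Mild}.

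For the stochastic term, it suffices to verify the Hilbert--Schmidt integrability
\[
\int_0^T \bigl\|e^{(t-r)\mathcal{L}_{\rho_\beta}}\mathcal{B}\bigr\|^{2}_{\mathbf{L}_2^0(U;H)}\, \dx r < \infty,
\]
where $\mathcal{B}:U \to H^{-1}(\T^d)$ is the bounded operator $u \mapsto \nabla\cdot(\sqrt{\rho_\beta}\, u)$. Testing $\mathcal{B}$ against the Fourier basis $e_k(x)=\hat{e}_k e^{2\pi i k\cdot x}$ of $U$ and exploiting the smoothness of $\sqrt{\rho_\beta}$ (so that multiplication is bounded on every negative Sobolev space) yields $\|\mathcal{B}e_k\|_{H^{-s-\theta}(\T^d)} \le C|k|^{1-s-\theta}$ for each $\theta\ge 0$. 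Combining this with the smoothing estimate above and summing over $k\in\Z^d\setminus\{0\}$ gives
\[
\bigl\|e^{r\mathcal{L}_{\rho_\beta}}\mathcal{B}\bigr\|^{2}_{\mathbf{L}_2^0(U;H)} \le C\, r^{-\theta}\sum_{k\ne 0}|k|^{2(1-s-\theta)},
\]
which is finite and integrable in $r$ over $[0,T]$ precisely when $\theta \in (d/2+1-s,\,1)$; this range is non-empty exactly under the hypothesis $s>d/2$.

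The last step is to upgrade the stochastic convolution from an $H$-valued random variable defined at each fixed $t$ to a process with trajectories in $C([0,T];H)$. This is done by the factorization method of Da Prato--Kwapie\'n--Zabczyk: for $\alpha\in(0,1/2)$ one writes
\[
\int_0^t e^{(t-r)\mathcal{L}_{\rho_\beta}}\mathcal{B}\,d\mathbf{W}(r) = \frac{\sin(\pi\alpha)}{\pi}\int_0^t (t-r)^{\alpha-1}e^{(t-r)\mathcal{L}_{\rho_\beta}}Y_\alpha(r)\,dr,
\]
with $Y_\alpha(r):=\int_0^r (r-u)^{-\alpha}e^{(r-u)\mathcal{L}_{\rho_\beta}}\mathcal{B}\,d\mathbf{W}(u)$. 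The previous Hilbert--Schmidt bound, re-run with an additional $(r-u)^{-2\alpha}$ weight, shows $Y_\alpha\in L^p([0,T];H)$ almost surely once $\alpha$ is chosen small and $p$ large, and the standard Sobolev embedding in the time variable then yields $C([0,T];H)$ trajectories almost surely. The main technical obstacle throughout is the Hilbert--Schmidt estimate, where the narrow margin between the Fourier-sum convergence threshold $s>d/2$ and the singularity $r^{-\theta}$ of the analytic semigroup smoothing must be balanced carefully; all remaining steps are routine once the generator structure and analyticity of $\{e^{t\mathcal{L}_{\rho_\beta}}\}$ are in place.
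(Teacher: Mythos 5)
Your argument is correct, but it takes a genuinely different route from the paper. The paper never invokes analyticity or smoothing of $e^{t\mathcal{L}_{\rho_\beta}}$: it computes the Hilbert--Schmidt norm of $\Phi(s)=e^{(t-s)\mathcal{L}_{\rho_\beta}}\nabla\cdot(\sqrt{\rho_\beta}\,\cdot\,)$ by duality against the basis $(\phi_j)$ of $H^s_0(\T^d)$, uses the coercivity hypothesis~\eqref{eq:coercivity} through the energy estimate of \cref{lem:gap} to get $\int_0^\infty\|\sqrt{\rho_\beta}\,\nabla e^{t\mathcal{L}^*_{\rho_\beta}}\phi_j\|_{L^2}^2\,\dx{t}\le C\|\phi_j\|_{L^2}^2$, and then sums over $j$ using the Hilbert--Schmidt embedding $H^s_0(\T^d)\hookrightarrow L^2(\T^d)$ for $s>d/2$; this yields the bound~\eqref{eq:trQ}, which is uniform in $T$ (indeed valid on $[0,\infty)$) and is reused later to identify the Gaussian invariant measure and the long-time decay of the semigroup. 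Your route instead trades coercivity for parabolic smoothing, balancing the singularity $r^{-\theta}$ against the Fourier sum $\sum_k|k|^{2(1-s-\theta)}$ with $\theta\in(d/2+1-s,1)$, which is exactly the same threshold $s>d/2$; the payoff is that you need no spectral-gap/coercivity input on finite horizons, at the cost of more semigroup machinery and of constants that are not uniform in $T$ (so this alone would not recover~\eqref{eq:trQ} or the stationarity arguments that follow). One point in your favour: you handle pathwise continuity of the stochastic convolution via the factorization method, whereas the paper leans on its preliminary remark about stochastic integrals with deterministic, time-independent integrands, which strictly speaking does not cover the convolution (it is not a martingale); your treatment of that step is the more careful one. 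A minor wording quibble: $\mathcal{L}_{\rho_\beta}-\beta^{-1}\Delta$ in~\eqref{e:linearised} contains genuine first-order terms and is therefore not a bounded operator on $H^{-s}$, only relatively bounded with relative bound zero (plus a smoothing nonlocal piece); the perturbation theorem you invoke still applies, but you should state it that way.
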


\begin{proof}
In light of our remarks in the preceding paragraph, it suffices to show that $s \mapsto \Phi(s)$ defined by $U \ni u \mapsto e^{(t-s)\mathcal{L}_{\rho_{\beta}} }\nabla \cdot (\sqrt{\rho_{\beta}}u ) \in H$ belongs to $L^{2}( [0,T];\mathbf{L}^{2}_{0}(U;H) )$.   Given an orthornormal basis $\{e_{k}\}_{k=1}^{\infty}$ of $U$, using the definition of the Hilbert--Schmidt norm and integrating by parts leads to
\begin{align}
\int_{0}^{T}\| \Phi(s)\|_{\mathbf{L}^{2}_{0}(U;H) }^{2}\dx s&=\sum_{k=1}^{\infty}\int_{0}^{T}\|e^{(t-s)\mathcal{L}_{\rho_{\beta}} }\nabla \cdot \big (\sqrt{\rho_{\beta}}e_{k}\big)\|_{H}^{2} \dx{s}\nonumber \\
&=\sum_{j,k=1}^{\infty}\int_{0}^{T} \langle e_{k},\sqrt{\rho_{\beta} }\nabla e^{(t-s)\mathcal{L}_{\rho_{\beta}}^{*} }\phi_{j} \rangle_{U}^{2}\dx s \nonumber \\
&=\sum_{j=1}^{\infty} \int_{0}^{T} \|\sqrt{\rho_{\beta} } \nabla e^{(t-s)\mathcal{L}_{\rho_{\beta}}^{*} }\phi_{j}\|_{U}^{2}\dx s, \label{normId}
\end{align}
where in the last step we used Parseval's identity in $U$.  Finally, we note that the coercivity hypothesis for $\mathcal{L}_{\rho_{\beta}}$ (see \cref{lem:gap}) implies that 
\begin{equation}
\int_{0}^{t} \|\nabla e^{(t-s)\mathcal{L}_{\rho_{\beta}}^{*} }\phi_{j}\|_{U}^{2}\dx{s} \leq C \|\phi_{j}\|_{L^{2}(\T^d)}^{2} \nonumber.
\end{equation}
Combining this with \eqref{normId} and using the fact that $\rho_{\beta} \in L^{\infty}(\T^{d})$, yields
\begin{equation}
\int_{0}^{T}\| \Phi(s)\|_{\mathbf{L}^{2}_{0}(U;H) }^{2} \dx{s} \leq C\|\rho_\beta\|_{L^\infty(\T^d)} \sum_{j=1}^{\infty}\|\phi_{j}\|_{L^{2}(\T^d)}^{2}. 
\label{eq:trQ}
\end{equation}
Since $s>\frac{d}{2}$, the embedding of $H^{s}_0(\T^d)$ into $L^{2}(\T^d)$ is Hilbert--Schmidt. Thus, the above series converges, completing the proof of the lemma.
\end{proof}

\medskip

\begin{remark}\label{rem:unique}
We note that the representation \eqref{eq:Mild} immediately implies that solutions to \eqref{SPDE1} are unique in law, i.e.~\eqref{SPDE1} satisfies weak uniqueness.  That is to say, given two different stochastic bases $(\mathcal{X}, \mathcal{F}, (\mathcal{F}_t)_{t \geq 0}, \P,\textbf{W})$ and $(\tilde{\mathcal{X}}, \tilde{\mathcal{F}}, (\tilde{\mathcal{F}}_t)_{t \geq 0}, \tilde{\P}, \widetilde{\textbf{W}} )$ defining solutions $\eta $ and $\tilde{\eta}$ to \eqref{SPDE1} on their respective probability spaces through the formula \eqref{eq:Mild}, the laws of $\eta$ and $\tilde{\eta}$ agree on $C([0,T];H ) $ for any $T>0$, as long as $\eta_0$ and $\tilde{\eta}_0$ are equal in law on $H$. 
\end{remark}

\medskip

For our purposes, it is easier to work with the martingale formulation of \eqref{SPDE1}, which is in turn motivated by the weak formulation of \eqref{SPDE1}.  Hence, we note in passing that the mild solution    \eqref{eq:Mild} has the property that for each $t \in [0,T]$ the following equality holds in $H^{-s}(\T^d)$, for $s>\frac{d+2}{2}$,
\begin{equation}
\eta^\infty(t)=\eta_{0}+\int_{0}^{t}\mathcal{L}_{\rho_{\beta}}\eta^{\infty}(s)
\;\dx{s}+\int_{0}^{t} \nabla \cdot(\sqrt{\rho_{\beta}}d\textbf{W}_{s}).\label{weakSol}
\end{equation}

\medskip

\begin{lemma}\label{martingalelem}
Let $(\mathcal{X}, \mathcal{F},(\mathcal{F}_{t})_{t \geq 0}, \P )$ be a filtered probability space. Assume that $\eta$ is a continuous-time  $\mathcal{F}_t$-adapted $H$-valued stochastic process and define $t \mapsto M(t) $ by
\begin{equation}
M(t):=\eta(t)-\eta_{0}-\int_{0}^{t}\mathcal{L}_{\rho_{\beta}}\eta(s)\;\dx{s}. \label{Mart0}
\end{equation}
Assume that the following two conditions hold:
\begin{itemize}

\item For all $\varphi \in C^{\infty}(\T^d)$ it holds that 
\begin{equation}
t \mapsto \langle M(t),\varphi \rangle \label{Mart1}
\end{equation}
 is an $\cF_t$-martingale.
\item For all $\varphi,\psi \in C^{\infty}(\T^d)$ it holds that
\begin{equation}
t \mapsto \langle M(t),\varphi \rangle \langle M(t),\psi \rangle-t\int_{\T^d}  \nabla \phi\cdot \nabla \psi \rho_{\beta}\;\dx{x} \label{Mart2}
\end{equation}
is an $\cF_t$ martingale.

\end{itemize}

Then, $\eta$ is equal in law on $C([0,T]; H)$ to the mild solution~\eqref{eq:Mild}.  
\end{lemma}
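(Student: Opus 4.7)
The plan is to combine the two martingale conditions~\eqref{Mart1}--\eqref{Mart2} with an infinite-dimensional martingale representation theorem in order to exhibit, on a (possibly enlarged) probability space, a cylindrical Wiener process $\textbf{W}$ on $U$ for which $\eta$ solves the weak form~\eqref{weakSol}. Weak uniqueness (via the mild formulation, see \cref{rem:unique}) will then finish the argument.

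First I would upgrade the test function class: by continuity of $M$ in $H$ and by density of $C^\infty(\T^d)$ in $H^s(\T^d)$, the condition~\eqref{Mart1} extends so that $\langle M(t),\varphi\rangle$ is a martingale for every $\varphi \in H^s(\T^d)$, and hence $M$ is itself a continuous $H$-valued square-integrable martingale (integrability being controlled by~\eqref{L2control} and the a priori assumption on $\eta$). Similarly, from~\eqref{Mart2} I read off the cross-variation of $M$ as
\begin{equation}
\langle\langle M,\varphi\rangle, \langle M,\psi\rangle\rangle_t = t \int_{\T^d} \nabla\varphi\cdot\nabla\psi\, \rho_\beta \dx{x}.
\end{equation}
Introducing the bounded linear operator $Q:U\to H$ defined by $Qu := \nabla\cdot(\sqrt{\rho_\beta}\, u)$, a direct computation (already used in the proof of \cref{mild}) gives $\langle QQ^*\varphi,\psi\rangle = \int_{\T^d} \nabla\varphi\cdot\nabla\psi\, \rho_\beta \dx{x}$ for $\varphi,\psi\in H^s(\T^d)$. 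Thus the tensor quadratic variation of $M$ coincides with $t\,QQ^*$, and the estimate~\eqref{eq:trQ} shows $Q$ is Hilbert--Schmidt from $U$ into $H$.

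Next I would invoke the representation theorem for Hilbert-space-valued square-integrable continuous martingales with absolutely continuous quadratic variation (see, e.g., \cite[Theorem 8.2]{RSMUP_1982__67__171_0}): there exist, on an enlargement of $(\mathcal{X},\mathcal{F},(\mathcal{F}_t),\P)$, a cylindrical $U$-Wiener process $\textbf{W}$ adapted to the enlarged filtration such that
\begin{equation}
M(t) = \int_0^t Q\, d\textbf{W}(s) = \int_0^t \nabla\cdot(\sqrt{\rho_\beta}\, d\textbf{W}(s)).
\end{equation}
Substituting back into~\eqref{Mart0} yields that $\eta$ satisfies the weak formulation~\eqref{weakSol} with driving noise $\textbf{W}$.

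Finally, by the standard equivalence between the weak and mild formulations for linear SPDEs driven by a strongly continuous semigroup with Hilbert--Schmidt diffusion coefficient (which is a stochastic Fubini / variation-of-constants argument; the coercivity of $\mathcal{L}_{\rho_\beta}$ from~\eqref{eq:coercivity} guarantees that $e^{t\mathcal{L}_{\rho_\beta}}$ is a strongly continuous contraction semigroup on $H$), $\eta$ agrees pathwise with the stochastic convolution~\eqref{eq:Mild} associated to $\textbf{W}$. Since that mild solution is unique in law (see \cref{rem:unique}), the law of $\eta$ on $C([0,T];H)$ is the one claimed. The main obstacle is the application of the martingale representation theorem in this infinite-dimensional setting, in particular verifying that $Q$ has the correct Hilbert--Schmidt structure and that the quadratic variation identity extends from the dense class $C^\infty(\T^d)$ to all of $H^s(\T^d)$ while maintaining measurability of the enlargement; once this is in place, the weak-to-mild passage and uniqueness are standard.
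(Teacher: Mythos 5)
Your proposal is correct and follows essentially the same route as the paper: the paper likewise invokes the infinite-dimensional martingale representation theorem (the same \cite[Theorem 8.2]{RSMUP_1982__67__171_0}) to realise $\eta$ as a weak solution of~\eqref{weakSol} on an extension of the probability space, then uses the weak--mild equivalence (\cite[Theorem 5.4]{RSMUP_1982__67__171_0}) and concludes by the uniqueness in law of~\cref{rem:unique}. You simply spell out the intermediate verifications (identification of the quadratic variation operator $QQ^*$ and its Hilbert--Schmidt property via~\eqref{eq:trQ}) that the paper leaves to the cited theorems.
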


\begin{proof}
By \cite[Theorem 8.2]{RSMUP_1982__67__171_0}, it follows that on a suitable extension of the probability space, $\eta$ is a weak solution in the sense of \eqref{weakSol}.  By \cite[Theorem 5.4]{RSMUP_1982__67__171_0}, the weak and mild solutions coincide on that probability space, so equality in law follows from~\cref{rem:unique}.  
\end{proof}

\medskip

Using the above lemma, we are now finally in a position to prove the convergence of the fluctuations.

\begin{theorem}
Assume that $\liminf_{N\to\infty}\lambda^{N}_{LS}>0$ and that $V$ and $W$ are smooth. Then, for any $m>d/2 + 3$ the fluctuation process $\eta^{N}$ converges in law on $C([0,T];H^{-m}(\T^d))$ to the unique stationary mild solution $\eta^\infty$ of the SPDE \eqref{SPDE1}.
\end{theorem}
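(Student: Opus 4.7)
The plan is to apply the classical martingale method: derive a semimartingale decomposition for the weak evolution of $\eta^N$ against smooth test functions, establish tightness of $\{\eta^N\}$ in $C([0,T]; H^{-m}(\T^d))$, identify any accumulation point as a solution of the martingale problem of \cref{martingalelem} (hence as a mild solution of \eqref{SPDE1}), and conclude using stationarity together with the weak uniqueness noted in \cref{rem:unique}. For $\varphi \in C^\infty(\T^d)$, applying It\^o's formula to each $\varphi(X_t^i)$, averaging over $i$, subtracting the vanishing contribution from $\rho_\beta$ via \cref{prop:steadystate}, and expanding $\mu^{(N)} = \rho_\beta + N^{-1/2}\eta^N$ in the quadratic interaction term yields
\begin{equation*}
\langle \eta^N(t), \varphi\rangle = \langle \eta^N(0), \varphi\rangle + \int_0^t \langle \mathcal{L}_{\rho_\beta}\eta^N(s), \varphi\rangle\, ds + R^N_t(\varphi) + M^N_t(\varphi),
\end{equation*}
where $R^N_t(\varphi) = -N^{-1/2}\int_0^t \iint \nabla\varphi(x)\cdot \nabla_1 W(x,y)\, d\eta^N(s)(x)\, d\eta^N(s)(y)\, ds$ carries an explicit $N^{-1/2}$ prefactor, and $M^N_t(\varphi)$ is a continuous martingale with bracket $\langle M^N(\varphi), M^N(\psi)\rangle_t = 2\beta^{-1}\int_0^t \langle \mu^{(N)}(s), \nabla\varphi\cdot\nabla\psi\rangle\, ds$.

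Next I prove tightness. Fix $s \in (d/2+1, m-2)$, which is nonempty because $m > d/2 + 3$. The a priori bound \eqref{L2control} combined with the compact embedding $H^{-s} \hookrightarrow H^{-m}$ gives tightness of the one-time marginals. For the time modulus I invoke Aldous' criterion: for a stopping time $\tau \leq T$ and small $\delta > 0$, the drift increment satisfies $\mathbb{E}\|\int_\tau^{\tau+\delta}\mathcal{L}_{\rho_\beta}\eta^N(r)\, dr\|^2_{H^{-m}} \leq C\delta^2 \sup_r \mathbb{E}\|\eta^N(r)\|^2_{H^{-s}}$ via $\mathcal{L}_{\rho_\beta}: H^{-s} \to H^{-(s+2)} \hookrightarrow H^{-m}$ (this is where $m > s+2$ appears); the remainder is $O(N^{-1/2}\delta)$ in $H^{-m}$ using \eqref{L2control} and the smoothness of $W$; and the martingale increment has $\mathbb{E}\|\cdot\|^2_{H^{-m}}$ bounded by $C\delta\sum_j \|\nabla\phi_j\|_{L^\infty}^2$ along a Fourier basis $\{\phi_j\}$ of $H^m_0(\T^d)$, which is finite because $m > d/2 + 1$ (cf.\ the Hilbert--Schmidt structure exploited in \eqref{eq:trQ}). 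All three estimates are uniform in $N$, yielding tightness in $C([0,T]; H^{-m})$.

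Given any accumulation point $\eta^\infty$, I pass to the limit in the semimartingale decomposition after Skorokhod representation. The quadratic remainder vanishes in $L^2$ thanks to its $N^{-1/2}$ factor and \eqref{L2control}; the martingale bracket converges to $2\beta^{-1} t \int |\nabla\varphi|^2 \rho_\beta\, dx$ via the law of large numbers for $\mu^{(N)}$, matching (up to a thermal rescaling of $\xi$) the noise structure in \eqref{SPDE1}. Both martingale conditions of \cref{martingalelem} then hold for $\eta^\infty$, identifying it as a mild solution of \eqref{SPDE1}. Since $M_N$ is invariant for the particle dynamics, each $\eta^N$ is time-stationary, and stationarity passes to the weak limit; the coercivity hypothesis \eqref{eq:coercivity} implies that \eqref{SPDE1} admits a unique invariant Gaussian measure $\mathcal{G}$, pinning down the law of $\eta^\infty$. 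Weak uniqueness (\cref{rem:unique}) upgrades convergence along subsequences to convergence of the full sequence.

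The hard part will be the tightness step: the negative Sobolev index $m$ must simultaneously absorb the two derivatives lost to $\mathcal{L}_{\rho_\beta}$ in the drift and the $d/2$ Sobolev gap needed for the a priori control of $\eta^N$ via \eqref{L2control} (which forces $s > d/2 + 1$). Together these force $m > s + 2 > d/2 + 3$, which is precisely the hypothesis. Carrying out the Aldous estimate in negative Sobolev norms uniformly in $N$, while tracing the role of the non-degeneracy of $\lambda^N_{\LSI}$ through \eqref{L2control}, constitutes the main technical content; the quadratic self-interaction $R^N_t$ is by contrast easily absorbed due to its explicit $N^{-1/2}$ prefactor.
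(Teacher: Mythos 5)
Your proposal is correct and follows essentially the same route as the paper's proof: the It\^o/semimartingale decomposition with the $N^{-1/2}$ remainder $R_N$, tightness in $C([0,T];H^{-m}(\T^d))$ with exactly the same index bookkeeping (two derivatives lost to $\mathcal{L}_{\rho_\beta}$ plus the requirement $s>d/2+1$ from \eqref{L2control}, forcing $m>d/2+3$), identification of limit points through the martingale problem of \cref{martingalelem} after Skorokhod representation, and stationarity combined with the weak uniqueness of \cref{rem:unique} to upgrade subsequential convergence to convergence of the full sequence. The only deviation is technical: you establish tightness via Aldous' criterion together with the compact embedding $H^{-s}\hookrightarrow H^{-m}$, whereas the paper proves Kolmogorov-type increment bounds \eqref{s4}--\eqref{s5} for the decomposition $\eta^N=Y_N+M_N$ and invokes the fractional Sobolev compactness criterion of \cite[Theorem 2.2]{flandoli1995martingale}; both rest on the same stationarity and a priori bound, and your remark about the ``thermal rescaling'' of the noise correctly flags a $2\beta^{-1}$ constant that the paper itself glosses over between \eqref{s1} and \eqref{Mart2}.
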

\begin{proof}
The proof has four steps.  In Step 1, we apply It\^o's formula to show that $\eta^{N}$ satisfies~\eqref{s1}, an approximate version of the weak formulation \eqref{weakSol}.  In Step 2, we combine Step 1 with the uniform bound \eqref{L2control} to show that the laws of $(\eta^{N})_{N \in \N}$ on $C([0,T];H^{-m}(\T^d))$ are uniformly tight for $m$ large enough.  In Step 3, we pass to the limit in the martingale problem and verify the assumptions of Lemma~\eqref{martingalelem} to identify the limit along any tight subsequence.  In Step 4, we conclude the uniqueness of the limit and hence the proof of the theorem.

\medskip

\newcounter{CLT} % proofstep = 0
\refstepcounter{CLT} % increases value by 1
{\sc Step} \arabic{CLT}.\label{CLTP1}\refstepcounter{CLT} In this step, we show that for all $\varphi \in C^{\infty}(\T^{d})$ it holds
\begin{equation}
 \dx \langle\eta^{N},\varphi\rangle=\langle \eta^{N}, \mathcal{L}_{\rho_\beta}^{*} \varphi \rangle \dx t+\langle R_{N}, \varphi \rangle \dx t +\sqrt{2}(\beta N)^{-\frac{1}{2}}\sum_{i=1}^{N}\nabla\varphi(X_t^i) \cdot dB_t^i\label{s1} \, ,
\end{equation}
with $X_t^i$ the solution to~\eqref{eq:particle system} and where $R_{N}$ is defined as
\begin{equation}
R_{N}:=N^{-\frac{1}{2}}\nabla\cdot(\eta^N \nabla_{1} W\star\eta^N) \, .\label{s8}
\end{equation}
Indeed, It\^o's formula gives 
\begin{align}
\dx \varphi(X_t^i)=& \bra*{ \beta^{-1} \Delta- \nabla V \cdot \nabla } \varphi(X_t^i) \dx t -\frac{1}{N}\sum_{j=1}^{N}  \nabla_{1} W(X_t^i,X_t^j)  \cdot\nabla \varphi(X_t^i)\dx t\\&+\sqrt{2}\beta^{-1}\nabla \varphi(X_t^i) \cdot dB_t^i \, . 
\end{align}
We now sum over $i=1,\dots,N$, divide by $N$, and use the identity
\begin{align}
\frac{1}{N^{2}}\sum_{i,j=1}^{N} \nabla_{1}W(X_{t}^i,X_{t}^j) \cdot \nabla \varphi(X_{t}^i)=&\frac{1}{N}\sum_{i=1}^{N} \nabla_{1} W*\mu^{(N)}(X_{t}^i) \cdot \nabla \varphi(X_{t}^i)\\=& \langle \mu^{(N)}, \nabla_{1} W \star \mu^{(N)} \cdot \nabla \varphi \rangle \, ,    
\end{align}
to obtain
\begin{equation}
\dx \langle \mu^{(N)},\varphi \rangle = \langle \mu^{(N)}, \beta^{-1} \Delta \varphi - (\nabla V+\nabla_{1}W\star \mu^{(N)}) \cdot \nabla \varphi \rangle \dx t+\sqrt{2}\beta^{-\frac{1}{2}}\frac 1 N \sum_{i=1}^{N} \nabla \varphi(X^{i}_t) \cdot d B_{t}^i \nonumber
\end{equation}
Next, we insert the identity $\mu^{(N)}=\rho_{\beta}+N^{-\frac 1 2}\eta^{N}$, to deduce
\begin{align}
\dx \langle \eta^{N},\varphi \rangle =& N^{1/2}\underbrace{\langle \rho_\beta, \beta^{-1} \Delta \varphi - (\nabla V+\nabla_{1}W\star \rho_\beta) \cdot \nabla \varphi \rangle}_{=0} \dx t\\
&+\underbrace{\langle \eta^N, \beta^{-1} \Delta \varphi - (\nabla V+\nabla_{1}W\star \rho_\beta) \cdot \nabla \varphi \rangle+\langle \rho_\beta, \nabla_{1}W\star \eta^N \cdot \nabla \varphi \rangle}_{=\langle \eta^N,\mathcal{L}_{\rho_\beta}^*\varphi\rangle} \dx t \\
&+\underbrace{N^{-1/2}\langle \eta^N, \nabla_{1}W\star \eta^N \cdot \nabla \varphi \rangle}_{\langle R^N,\varphi\rangle}+\frac{\sqrt{2}}{\beta N^{\frac{1}{2}}}   \sum_{i=1}^{N} \nabla \varphi(X^{i}_t) \cdot d B_{t}^i  \nonumber
\end{align}
The first identity follows from the fact that $\rho_{\beta}$ is a steady state and the second follows from integration by parts and Fubini's theorem (using the symmetry of $W$).

{\sc Step} \arabic{CLT}.\label{CLTP11}\refstepcounter{CLT}
In this step, we will show that  the laws of $(\eta^{N})_{N \in \N}$ on  $C([0,T];H^{-m}(\T^d))$ for $m>m_{0}:=d/2 + 3$ are uniformly tight. To this end, we define a decomposition of $t \mapsto \eta^{N}(t)$ via the equality $\eta^{N}(t)=Y_{N}(t)+M_{N}(t)$, where
\begin{equation}
Y_{N}(t):=\eta^N(0)+\int_{0}^{t} \big ( \mathcal{L}_{\rho_{\beta}}\eta^{N}(r)+R^{N}(r) \big )\; \dx{r}.
\end{equation}
We claim that for $m>m_{0}$ and all $p \geq 1$ there exist constants $C,C_p>0$ such that
\begin{align}
    \E \pra*{\|Y^{N}(t_1)-Y^{N}(t_2)\|_{H^{-m}(\T^d)}^{2}} & \leq C|t_1-t_2|^{2} \label{s4}\\
    \E \pra*{\|M^{N}(t_1)-M^{N}(t_2)\|_{H^{-m}(\T^d)}^{p}} &\leq C_{p}|t_1-t_2|^{\frac p 2}.\label{s5} 
\end{align}
To obtain \eqref{s4}, first observe that at each fixed time we have
\begin{align}
    \|R_{N}\|_{H^{-m}(\T^d)}=&\|\nabla\cdot(\eta^N\nabla_{1}W\star(\mu^{(N)}-\rho_{\beta}))\|_{H^{-m}(\T^d)}\\\leq& \|\eta^N\nabla_{1}W\star(\mu^{(N)}-\rho_{\beta})\|_{H^{1-m}(\T^d)}\nonumber\\
    \leq& C_s\|\eta^{N}\|_{H^{1-m}(\T^d)}\|\nabla W\star(\mu^{(N)}-\rho_{\beta})\|_{W^{{m-1},\infty}(\T^d)}\\ \leq& C_{s}\|W\|_{W^{m,\infty}(\T^d)}\|\eta^{N}\|_{H^{1-m}(\T^d)}.  \label{Rbound}
\end{align}
Furthermore, note that the regularity of $V$, $W$, and $\rho_{\beta}$ implies the operator $\mathcal{L}_{\rho_{\beta}}$ is bounded from $H^{2-m}(\T^d)$ to $H^{-m}(\T^d)$.  Therefore, integrating in time, taking the second moment, and applying the Cauchy--Schwartz inequality we find
\begin{align}
    \E \pra*{\|Y^{N}(t_{1})-Y^{N}(t_{2})\|_{H^{-m}(\T^d)}^{2}}\le & \E\left(    \int_{t_{1}}^{t_{2}}\|\mathcal{L}_{\rho_{\beta}}\eta^N(r)\|_{H^{-m}(\T^d)}+\E\|R_{N}\|_{H^{-m}(\T^d)}\dx{r}\right)^2\\
    \leq& C|t_{2}-t_{1} | \int_{t_{1}}^{t_{2}}\E\|\eta^{N}\|_{H^{2-s}(\T^d)}^{2}+\E\|\eta^{N}\|_{H^{1-m}(\T^d)}^{2}\;\dx{r} \\
    \leq& C|t_{2}-t_{1}|^{2}\E\|\eta^{N}(0) \|_{H^{2-m}(\T^d)}^{2} \, , \nonumber
\end{align} 
where we used the stationarity of $\eta^{N}$ in the last step.  The inequality \eqref{s4} now follows immediately from the bound \eqref{L2control}, since our choice of $m$ implies $m-2>1+\frac{d}{2}$.  We now turn our attention to the estimate \eqref{s5}.  Note that for any smooth $\varphi$ it holds that 
\begin{align}
\E \pra*{ \abs*{ \langle M^{N}(t)-M^{N}(s), \varphi \rangle }^{p} }\leq& C \E\bigg (\frac{1}{N}\sum_{i=1}^{N}\int_{s}^{t}|\nabla \varphi(X_t^{i})|^{2}\dx t \bigg )^{\frac{p}{2}} \\\leq& C |t-s|^{\frac p 2} \|\nabla \varphi \|_{L^{\infty}(\T^d)}^{2}. \label{bdg}
\end{align}
We take $\varphi$ to be elements of a basis of $H^{m}_0(\T^d)$, the inequality \eqref{s5} holds as long as 
$$
\sum_{j\in\N}\|\nabla \varphi_j \|_{L^{\infty}(\T^d)}^{2}<\infty.
$$
The above estimate follows when we take $m>d/2+1$.
Finally, we note that by an argument entirely analogous to the one showing \eqref{s4} and \eqref{s5}, we can also show 
\begin{equation}
\sup_{N \in \N}\E\pra*{\sup_{t \in [0,T]} \|\eta^N(t)\|_{H^{-m}(\T^d)}^{2}}<\infty, \label{s3}
\end{equation}
by using that $M^{N}(0)=0$ and $Y^{N}(0)=\eta^{N}(0)$ together with \eqref{L2control}. Combining \eqref{s4}, \eqref{s5}, and \eqref{s3} we obtain the tightness of the laws of $(\eta^{N})_{N \in \N}$ as a consequence of \cite[Theorem 2.2]{flandoli1995martingale} and Chebyshev's inequality. Specifically, we use the embedding of $W^{1-\eps,2}([0,T];H^{-m+\eps}(\T^d))+W^{\frac{1}{2},p}([0,T]; H^{-m+\epsilon}(\T^d))$ into $C([0,T];H^{-m}(\T^d) )$ for $p>2$ and $\eps>0$ sufficiently small. 
 
 \medskip

{\sc Step} \arabic{CLT}.\label{CLTP12}\refstepcounter{CLT}
In light of Step 2 and the Skorokhod representation theorem, passing to a subsequence (which we do not relabel) we can find a new probability space $(\tilde{\mathcal{X}},\tilde{\mathcal{F}},\tilde{\mathbb{P}})$, a new sequence $(\tilde{\eta}^{N})_{N \in \N}$, and a limiting random variable $\eta$ such that $\tilde{\eta}^{N}$ is equal in law to $\eta^{N}$ for all $N \in \N$ and converges $\tilde{\P}$-a.s to $\eta$ in $C([0,T];H^{-m}(\T^d))$.  In this step, we claim that \eqref{Mart1} and \eqref{Mart2} hold true.  

To this end, for each $t>0$, we denote by $r_{t}$ the restriction operator from $C([0,T];H^{-m}(\T^d))$ to $C([0,t];H^{-m}(\T^d))$ and define a filtration $(\tilde{\mathcal{F}}_{t})_{t \geq 0}$ by letting $\tilde{\mathcal{F}}_{t}=\sigma(r_{t}\eta)$ for $t>0$, i.e. the sigma algebra generated by $r_t\eta$.  Recalling the definition \eqref{Mart0} of $t \mapsto M(t)$, we will show that that for all times $s<t$, functions $\varphi, \psi \in C^{\infty}(\T^{d})$, and bounded, continuous functions $\Gamma: C([0,s];H^{-m}(\T^d)) \mapsto \R$ it holds that
\begin{align}
    &\E \pra*{ \Gamma(r_{s}\eta)  \langle M(t)-M(s),\varphi \rangle  }=0. \label{s6} \\
    & \E \big[ \Gamma(r_{s}\eta) \big ( \langle M(t), \varphi \rangle \langle M(t), \psi \rangle - \langle  M(s), \varphi \rangle \langle M(s), \psi \rangle \label{s7}\\
    & \qquad -(t-s)\langle \rho_{\beta} \nabla \varphi , \nabla \psi \rangle \big ) \big]=0. \nonumber
\end{align}
By the definition of conditional expectation and Egorov's theorem, \eqref{s6} and \eqref{s7} imply \eqref{Mart1} and \eqref{Mart2} hold true with respect to the filtration $(\tilde{\mathcal{F}}_{t})_{t \geq 0}$.  To prove \eqref{s6} and \eqref{s7}, define $t \mapsto \tilde{M}^{N}(t)$ as in Step 2 but with $\tilde{\eta}_{N}$ in place of $\eta^N$.
 
Since $\tilde{\eta}^{N}$ and $\eta^{N}$ are equal in law, \eqref{s1} implies that
\begin{align}
    &\E \big [ \Gamma(r_{s}\tilde{\eta}_{N})  \langle \tilde{M}^{N}(t)-\tilde{M}^{N}(s),\varphi \rangle  \big]=0. \label{s9} \\
    & \E \big [\Gamma(r_{s} \tilde{\eta}_{N} ) \big ( \langle \tilde{M}_{N} (t), \varphi \rangle \langle \tilde{M}_{N} (t), \psi \rangle - \langle  \tilde{M}_{N} (s), \varphi \rangle \langle \tilde{M}_{N}(s), \psi \rangle \label{s10} \\
    &\qquad \qquad-(t-s)\langle \rho_{\beta} \nabla \varphi , \nabla \psi \rangle \big ) \big]=0. \nonumber
\end{align}
 By a calculation similar to \eqref{Rbound}, it follows that for each $t \leq T$
\begin{equation}
\bigg \| \int_{0}^{t}\tilde{R}_{N}(s)ds \bigg \|_{H^{-(m+1)}(\T^d)} \leq C N^{-1/2}\|\tilde{\eta}_{N}\|_{C([0,T];H^{-m}(\T^d) )}^{2},   
\end{equation}  
which converges to zero $\tilde{P}$-a.s.  As a consequence, we obtain $\langle \tilde{M}^{N}(t), \varphi \rangle$ converges  $\tilde{P}$-a.s. to $\langle M(t), \varphi \rangle$ as a consequence of the a.s. convergence of $\tilde{\eta}^{N}$ to $\eta^{\infty}$.  In addition,  for all $t>0$, the sequence $\langle \tilde{M}^{N}(t), \varphi \rangle$ is uniformly bounded in $L^{p}(\tilde{\mathcal{X}})$ for all $p \geq 1$ as a consequence of the equality in law of $\tilde{\eta}^{N}$ and $\eta^{N}$ and the estimate \eqref{bdg}.  Using the Vitali convergence theorem, we may pass to the limit in \eqref{s9} and \eqref{s10} to obtain \eqref{s6} and \eqref{s7} as desired.  
\medskip

{\sc Step} \arabic{CLT}.\label{CLTP13}\refstepcounter{CLT}
In light of Step 3 and Lemma \ref{martingalelem}, every subsequence of $\eta^N$ has a further subsequence which converges to a stationary mild solution to \eqref{SPDE1} on some probability space.  Note that $\eta$ inherits stationarity from $\tilde{\eta}^{N}$ in the $N \to \infty$ limit.
Hence, it suffices to show that all limit points induce the same law on $C([0,T];H^{-m}(\T^d))$.  In light of Remark \ref{rem:unique}, the problem further reduces to showing that the initial distributions are the same.  However, if $\eta$ satisfies \eqref{eq:Mild} and is stationary, we can explicitly check that its law is a Gaussian on $H^{-m}(\T^d)$  for all $t \geq 0$. This follows from~\cite[Theorem 5.22 and Proposition 5.23]{Hairer09},~\eqref{eq:trQ}, and the fact that for all $f \in H^{-m}(\T^d)$, we have 
\begin{equation}
\lim_{t \to \infty}\| e^{t \mathcal{L}_{\rho_\beta}} f\|_{H^{-m}} =0 \, .
\end{equation}
The fact that this holds true follows from the coercivity assumption and the fact that the semigroup $e^{t\mathcal{L}_{\rho_\beta}}$ is smoothing, i.e. it maps $H^{-m}(\T^d)$ to $L^2(\T^d)$ for all $t>0$. 
\end{proof}

We finish this section with the implication of the coercivity property, that is used in the proof to show the Hilbert-Schmidt property of the appropriate operators.
\begin{lemma}\label{lem:gap}
Assume that $\mathcal{L}_{\rho_\beta}$ satisfies
$$
-\langle \mathcal{L}_{\rho_\beta}\phi,\phi\rangle\ge c\|\nabla \phi\|_{L^2(\T^d)}^2
$$
for some $c>0$, then
$$
\int_0^\infty \|\nabla e^{t\mathcal{L}^*_{\rho_\beta}}\phi\|_{L^2(\T^d)}^2\;\dx{t}\le \frac{\|\phi\|_{L^2}^2}{2c}.
$$
\end{lemma}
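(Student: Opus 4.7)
The plan is to apply the standard energy dissipation argument to the $L^2$ norm of the adjoint semigroup orbit. Set $\phi_t := e^{t \mathcal{L}_{\rho_\beta}^*} \phi$ so that $\phi_t$ solves the abstract Cauchy problem $\partial_t \phi_t = \mathcal{L}_{\rho_\beta}^* \phi_t$ with $\phi_0 = \phi$. For each $t \geq 0$, I would compute
\begin{equation*}
\frac{d}{dt}\tfrac{1}{2}\|\phi_t\|_{L^2(\T^d)}^2 = \langle \mathcal{L}_{\rho_\beta}^* \phi_t, \phi_t\rangle_{L^2(\T^d)} = \langle \phi_t, \mathcal{L}_{\rho_\beta} \phi_t\rangle_{L^2(\T^d)},
\end{equation*}
where the second equality is just the definition of the flat $L^2$-adjoint. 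The coercivity hypothesis then yields
\begin{equation*}
\frac{d}{dt}\tfrac{1}{2}\|\phi_t\|_{L^2(\T^d)}^2 \leq -c\,\|\nabla \phi_t\|_{L^2(\T^d)}^2.
\end{equation*}

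Next, I would integrate this differential inequality on $[0,T]$ to obtain
\begin{equation*}
c \int_0^T \|\nabla \phi_t\|_{L^2(\T^d)}^2 \, dt \leq \tfrac{1}{2}\|\phi\|_{L^2(\T^d)}^2 - \tfrac{1}{2}\|\phi_T\|_{L^2(\T^d)}^2 \leq \tfrac{1}{2}\|\phi\|_{L^2(\T^d)}^2,
\end{equation*}
using the non-negativity of $\|\phi_T\|_{L^2(\T^d)}^2$. Since the right-hand side is independent of $T$, I can pass $T \to \infty$ by monotone convergence to obtain the desired bound $\int_0^\infty \|\nabla \phi_t\|_{L^2}^2\, dt \leq \|\phi\|_{L^2}^2 / (2c)$.

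The only genuinely delicate point is to justify the chain rule for $t \mapsto \|\phi_t\|_{L^2}^2$ and the use of the coercivity inequality along the orbit for all $t > 0$. Since $V, W$ are assumed smooth, the semigroup $e^{t\mathcal{L}_{\rho_\beta}^*}$ is instantaneously regularising, so for any $t > 0$ the function $\phi_t$ is smooth enough for the identity $\langle \mathcal{L}_{\rho_\beta}^* \phi_t, \phi_t \rangle = \langle \phi_t, \mathcal{L}_{\rho_\beta} \phi_t\rangle$ and the coercivity bound to apply directly; one can then argue on $[\varepsilon, T]$ and send $\varepsilon \to 0^+$ using the strong continuity of the semigroup on $L^2$. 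If $\phi$ has mean zero (which is the only case used in the proof of \Cref{mild}) the argument is valid as stated; otherwise one first projects onto mean-zero functions, noting that constants are preserved by $e^{t \mathcal{L}_{\rho_\beta}^*}$ and that $\nabla$ annihilates them. No further obstruction arises.
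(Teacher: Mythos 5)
Your proposal is correct and follows essentially the same argument as the paper: differentiate $\tfrac12\|e^{t\mathcal{L}^*_{\rho_\beta}}\phi\|_{L^2}^2$, transfer $\mathcal{L}^*_{\rho_\beta}$ onto the other factor, apply the coercivity hypothesis, and integrate in time. The additional remarks on the regularising property of the semigroup and the mean-zero reduction are fine but not needed beyond what the paper's proof tacitly assumes.
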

\begin{proof}
We use the short hand notation $\phi_t=e^{t\mathcal{L}^*_{\rho_\beta}}\phi$. We then have
$$
\partial_t \phi_t -\mathcal{L}_{\rho_\beta}^*\phi_t=0 \, .
$$
Multiplying by $\phi_t$ and integrating, we obtain the identity
$$
\frac12\frac{\dx}{\dx{t}}\|\phi_t\|^2_{L^2(\T^d)}-\langle \mathcal{L}_{\rho_\beta}\phi_t,\phi_t\rangle=0.
$$
Applying the coercivity bound, we obtain
$$
\frac{\dx}{\dx{t}} \|\phi_t\|_{L^2(\T^d)}^2+2c\|\nabla \phi_t\|_{L^2}^2 \le 0.
$$
Integrating in time we obtain
$$
2c \int_0^\infty \|\nabla \phi_t\|_{L^2(\T^d)}^2\;\dx{t}\le \|\phi_0\|_{L^2}^2=\|\phi\|_{L^2(\T^d)}^2 \, ,
$$
which is the desired estimate.
\end{proof}

\section*{Acknowledgements}
The authors would like to thank Martin Hairer and Luigia Ripani for useful discussions during the course of this work. G.A.P. was partially supported by the EPSRC through the grant number EP/P031587/1 and by JPMorgan Chase $\&$ Co under a J.P. Morgan A.I. Research Award 2019. M.G.D. was partially supported by CNPq-Brazil (\#308800/2019-2) and Instituto Serrapilheira.

\bibliographystyle{imsart-number}

\bibliography{ref}

\end{document}